\title{Nonparametric Change Point Detection in Regression}
\author{Valeriy Avanesov\\
	WIAS Berlin
}
\date{\today}
\numberwithin{equation}{section}
\theoremstyle{plain}
\newtheorem{theorem}{}[section]
\newtheorem{corollary}{}[section]
\newtheorem{lemma}{}[section]
\newtheorem{definition}{}[section]
\newtheorem{assumption}{}[section]
\newtheorem{remark}{}[section]
\begin{document}
	\renewcommand{\thedefinition}{Definition \thesection.\arabic{definition}}
	\renewcommand{\thelemma}{Lemma \thesection.\arabic{lemma}}
	\renewcommand{\thetheorem}{Theorem \thesection.\arabic{theorem}}
	\renewcommand{\theassumption}{Assumption \thesection.\arabic{assumption}}
	\renewcommand{\theremark}{Remark \thesection.\arabic{remark}}
	\renewcommand{\thecorollary}{Corollary \thesection.\arabic{corollary}}

	\maketitle

	\newcommand{\g}{\mathfrak{g}}
\newcommand{\given}{\middle|}
\newcommand{\B}{\mathfrak{B}}
\newcommand{\R}{\mathbb{R}}
\newcommand{\hattau}{\hat{\tau}}
\newcommand{\N}[2]{\mathcal{N}\left(#1, #2\right)}
\newcommand{\Nat}{\mathbb{N}}
\newcommand{\E}[1]{\mathbb{E}\left[#1\right]}
\newcommand{\Var}[1]{\mathrm{Var}\left[#1\right]}
\newcommand{\norm}[1]{\left\|#1\right\|}
\newcommand{\norma}[2]{\left\|#1\right\|_{#2}}
\newcommand{\normasq}[2]{\left\|#1\right\|_{#2}^2}
\newcommand{\infnorm}[1]{\left\|#1\right\|_{\infty}}
\newcommand{\onenorm}[1]{\left|\left|#1\right|\right|_{1}}
\newcommand{\normtwo}[1]{\left\|#1\right\|_2}
\newcommand{\K}{\mathcal{K}}
\newcommand{\Fnorm}[1]{\left\|#1\right\|_F}
\newcommand{\inv}[1]{#1^{-1}}
\newcommand{\Prob}[1]{\mathbb{P} \left\{#1\right\}}
\newcommand{\xtt}{\tilde x}
\newcommand{\Krt}{K^r_n(t)}
\newcommand{\Klt}{K^l_n(t)}
\newcommand{\invKrt}{\inv{K_n^r(t)}}
\newcommand{\invKlt}{\inv{K_n^l(t)}}
\newcommand{\Probboot}[1]{\mathbb{P}^\flat \left\{#1\right\}}
\newcommand{\Hnull}{\mathbb{H}_0}
\newcommand{\Hone}{\mathbb{H}_1}
\newcommand{\fstar}{f^*}
\newcommand{\eigmin}[1]{\lambda_{\min}(#1)}
\newcommand{\eigmax}[1]{\lambda_{\max}(#1)}
\newcommand{\Deltaf}{\Delta_f}
\newcommand{\fone}{f_1^*}
\newcommand{\ftwo}{f_2^*}
\newcommand{\Bj}{B(j)}
\renewcommand{\L}{\mathfrak{L}}
\newcommand{\Dj}{D(j)}
\newcommand{\Ej}{E(j)}
\newcommand{\Z}{\mathcal{Z}}
\newcommand{\eps}{\varepsilon}
\newcommand{\epshat}{\hat{\varepsilon}}
\newcommand{\Deltasigma}{\Delta_{\sigma^2}}
\newcommand{\epsboot}{\eps^\flat}
\newcommand{\n}{\mathfrak{N}}
\newcommand{\Ant}{A_n(t)}
\newcommand{\Antboot}{A_n^\flat(t)}
\newcommand{\Tn}{\mathbb{T}_n}
\newcommand{\ylt}{y^l_n(t)}
\newcommand{\yrt}{y^r_n(t)}
\newcommand{\epsrt}{\eps^r_n(t)}
\newcommand{\frt}{f^r_n(t)}
\newcommand{\hatfrt}{\hat f^r_n(t)}
\newcommand{\hatV}{\hat{V}_n(t)}
\newcommand{\invhatV}{\inv{\hat{V}_n(t)}}
\newcommand{\yhatrt}{\hat y^r_n(t)}
\newcommand{\yt}{y_n(t)}
\newcommand{\Xlt}{X^l_n(t)}
\newcommand{\Xrt}{X^r_n(t)}
\newcommand{\znt}[1]{z_{n, #1}(t)}
\newcommand{\xnt}{x_{n, \alpha}(t)}
\newcommand{\xbar}{\bar{x}}
\newcommand{\zntboot}[1]{z_{n, #1}^{\flat}(t)}
\newcommand{\xntboot}{x_{n, \alpha}^{\flat}(t)}
\newcommand{\yhat}{\hat{y}}
\newcommand{\yboot}{y^\flat}
\newcommand{\x}{\mathrm{x}}
\newcommand{\alphastar}{\alpha^*}
\newcommand{\Iboot}{\mathcal{I}^\flat}
\newcommand{\U}{\mathcal{U}}
\newcommand{\dist}{\mathrm{dist}}
\newcommand{\relint}{\mathrm{relint}}
\newcommand{\NFdelta}{N_F(\delta)}
\newcommand{\gammap}{\gamma_p}
\newcommand{\fp}{f_{2p}}
\newcommand{\barh}{\bar{h}}
\newcommand{\xnull}{\mathring{x}}
\newcommand{\Xnull}{\mathring{X}}
\newcommand{\Znull}{\mathring{Z}}
\newcommand{\X}{\mathcal{X}}
\newcommand{\uinv}{u^{-1}}
\newcommand{\brac}[1]{\left(#1\right)}
\newcommand{\bracCurl}[1]{\left\{#1\right\}}
\newcommand{\bracSq}[1]{\left[#1\right]}
\newcommand{\nmax}{n_{+}}
\newcommand{\nmin}{n_{-}}
\newcommand{\nstar}{n_*}
\newcommand{\xnj}{x_n(j)}
\newcommand{\Znj}{Z_n(j)}
\newcommand{\xnjt}{x_n^T(j)}
\newcommand{\xnjtilde}{\tilde x_n(j)}
\newcommand{\xnjtildet}{\tilde x_n^T(j)}
\newcommand{\ynj}{y_n(j)}
\newcommand{\Xnj}{X_n(j)}
\newcommand{\ynjt}{y_n^T(j)}
\newcommand{\Sjtilde}{\tilde S_j}
\newcommand{\Sj}{S_j}
\newcommand{\Fbeta}{F_\beta}
\newcommand{\gDelta}{g_\Delta}
\newcommand{\s}{\mathfrak{s}}
\newcommand{\Zt}{\tilde{Z}}
\newcommand{\Xt}{X_n(t)}
\newcommand{\xt}{\tilde{x}}
\newcommand{\xtilde}{\tilde x}
\newcommand{\abs}[1]{\left|#1\right|}
\newcommand{\logJ}{\log \abs{J}}
\newcommand{\logJSq}{\log^2 \abs{J}}
\newcommand{\muhat}{\hat{\mu}}
\newcommand{\sigmahat}{\hat{\sigma}}
\newcommand{\sigmahatsq}{\hat{\sigma}^2}
\newcommand{\Varb}[1]{\mathrm{Var}^\flat\left[#1\right]}
\newcommand{\Ibsize}{\abs{\mathcal{I}^\flat}}
\newcommand{\e}{\mathrm{e}}
\renewcommand{\t}{\mathrm{t}}
\renewcommand{\u}{\mathrm{u}}
\newcommand{\D}{\mathcal{D}}
\newcommand{\dotprod}[2]{\langle#1,#2\rangle}
\newcommand{\RJ}{\R^{\abs{J}}}
\newcommand{\Lammax}{\Lambda_{\max}}

	\begin{abstract}
		This paper considers the prominent problem of change-point detection in regression. The study suggests a novel testing procedure featuring a fully data-driven calibration scheme. The method is essentially a black box, requiring no tuning from the practitioner. The approach is  investigated from both theoretical and practical points of view. The theoretical study demonstrates proper control of first-type error rate under $\Hnull$ and power approaching $1$ under $\Hone$. The experiments conducted on synthetic data fully support the theoretical claims. In conclusion, the method is applied to financial data, where it detects sensible change-points. Techniques for change-point localization are also suggested and investigated.
	\end{abstract}

\section{Introduction}
The current study works on a problem of change point detection, which applications range from neuroimaging \cite{Bassett_Wymbs_Porter_Mucha_Carlson_Grafton_2010} to finance \cite{Serban2007,Bauwens2006,Engle1990,Mikosch2009}. In many fields practitioners have to deal with the processes subject to an abrupt unpredictable change, hence arises the need to detect and localize such changes. In the writing we refer to the former problem as {\it break detection} and the latter as {\it change-point localization}, effectively adopting the terminology suggested in \cite{avanesov2018}. The importance of the topic promotes an immense variety of considered settings and obtained results on the topic \cite{limitTheoremsCPAnalysis,aue2013,Matteson2015, Lavielle2006, aue2009, xie2013, zou2014,cho2016, jirak2015, Cho2015}.

In the current paper we focus on break detection and change point localization in regression. Typically, in a regression setting a dataset of pairs of (possibly) multivariate covariates and univariate responses is considered, while the goal is to approximate the functional dependence between the two. Here we assume, the data points are separated in time. The problem at hand is whether the functional dependence stayed the same over time and if not, when did the break take place. This setting has been attracting a plethora of attention for decades now. Most researches consider linear  \cite{Quandt, Kim1994,10.2307/2998540,BAI1999299,Bai2003,Qu2007,Martinez-Beneito2011, KIM2016106} or piece-wise constant regression \cite{bai_1997, li2016,2015arXiv150504898P}. A recent paper \cite{doi:10.1137/S0040585X97T988411} allows for a generalized linear model, leaving the proper choice of a parametric model to the practitioner. In contrast, we develop a fully non-parametric method, eliminating the need to choose a parametric family. Some papers (e.g. \cite{Quandt,BAI1999299,doi:10.1137/S0040585X97T988411}), however, rely on a fairly general framework of Likelihood-Ratio test, which we employ in our study as well.  Further, some researchers (see \cite{fryzlewicz2014} for example) propose a test statistic, yet leave the choice of the critical value to the practitioner, while we also suggest a fully data driven way to obtain them.

Contribution of our work consists in a novel break detection approach in regression which is:
\begin{itemize}
    \item fully nonparametric
    \item fully data-driven
    \item working in black-box mode: has virtually no tuning parameters
    \item capable of multiple break detection
    \item naturally suitable for change-point localization
    \item featuring formal results bounding first type error rate (from above) and power (from below)
    \item performing well on simulated and on real-world data.
\end{itemize}

Formally, we consider the pairs of deterministic multidimensional covariates $X_i \in \X$ and corresponding univariate responses $y_i \in \R$ for $i \in 1..N$, where $\X$ is a compact in $\R^p$. We wish to test a null hypothesis
\begin{equation}\label{key}
\Hnull = \left\{\forall i : y_i = \fstar(X_i) + \eps_i \right\}
\end{equation}
versus an alternative (only a single break is allowed for simplicity, Section \ref{secMult} suggests a generalization)
\begin{equation}\label{key}
\Hone = \begin{rcases}
\begin{cases}
\exists \tau, \fone \neq \ftwo : &y_i = \fone(X_i) + \eps_i \text{ if } i < \tau
\\ &y_i = \ftwo(X_i) + \eps_i \text{ otherwise}
\end{cases}
\end{rcases},
\end{equation}
where $\eps_i$ denote centered independent identically distributed noise. The functions $\fstar$, $\fone$ and $\ftwo$, mapping from the compact $\X$ to $\R$, are assumed to be unknown along with the distribution of $\eps_i$.

The approach relies on Likelihood Ratio test statistic. Assume for now, the break could happen only at the time $t$. Then it makes sense to consider $n$ data points to the left and $n$ data points to the right of $t$ and consider the ratio of likelihoods $\Ant$ of $2n$ points under a single model and under a pair of models explaining the portions of data to the left and to the right of $t$ separately. Yet the break can happen at any moment, so we consider the test statistics for all possible time moments simultaneously. Finally, in order to resolve the issue of the proper choice of the window size $n$ we suggest to consider multiple window sizes $n \in \n \subset \Nat$ at once (e. g. powers of $2$).

The paper is organized as follows. Section \ref{sectionmethod} describes the approach. Further, the approach receives a formal treatment in Section \ref{sectheor}. Finally, the behavior of the approach is empirically examined in Section \ref{secexper}.

\section{The approach}\label{sectionmethod}
Let us introduce some notation first. Denote the maximal and the minimal window sizes as $\nmax \coloneqq \max \n$ and $\nmin \coloneqq \min \n$. Define a set of central points for each window size $n$ as $\Tn \coloneqq \{n, n+1,..,N-n\}$. Further, for each $n\in \n$ and $t\in \Tn$ define vectors $\ylt$ composed of the responses $\{y_i\}_{i=t-n+1}^{t}$ belonging to the window to the left of $t$. Correspondingly, vectors $\yrt$ are composed of $\{y_i\}_{i=t+1}^{t+n}$. The concatenation of these two vectors is denoted as $\yt$. Also, we use $\Xlt$, $\Xrt$ and $\Xt$ to denote the tuples of covariates corresponding to $\ylt$, $\yrt$ and $\yt$ respectively. For each window size $n \in \n$ and central point $t \in \Tn$ we define the test statistic
\begin{equation}\label{key}
\Ant \coloneqq L\left(\ylt,  \Xlt\right) + L\left(\yrt, \Xrt\right) - L(\yt, \Xt),
\end{equation}
where $L$ is a likelihood function which is defined below. Intuitively, the statistic should take extremely large values when the two portions of data before and after $t$ are much better explained by a pair of distinct models than by a common one.
As we aim to construct a nonparametric approach, we define $L$ relying on a well known technique named Gaussian Process Regression \cite{Rasmussen2006}. Formally, we model the noise with a normal distribution and impose the zero-mean Gaussian Process prior with covariance function $k(\cdot, \cdot)$ on the regression function
\begin{equation}\label{key}
\begin{split}
f &\sim \mathcal{GP}\left(0,\rho k(\cdot, \cdot) \right), \\
y_j &\sim \N{f(X_j)}{\sigma^2} \text{ for } j \in 1..M,
\end{split}
\end{equation}
where $M$ is the number of response-covariate pairs under consideration and $\rho$ is a regularization parameter (see \eqref{defrho1} and \eqref{defrho2} for its choice).
Integrating $f$ out we can easily see, the joint distribution of responses $y \in \R^M$ given the covariates $X=\{X_j\}_{j=1}^M$ is modelled as a multivariate normal distribution with zero mean and covariance matrix $K\left(X\right) \in \R^{M\times M}$, such that $K\left(X\right)_{jj'}\coloneqq \rho k(X_j, X_{j'}) + \sigma^2\delta_{jj'}$, where $\delta_{jj'}$ is Kronecker delta. This observation followed by 	taking the logarithm and abolishing the non-random additive constants leads to the following definition of the likelihood $L$:
\begin{equation}\label{key}
L(y, X) \coloneqq -\frac{1}{2}y^T\inv{K(X)}y.
\end{equation}

\begin{remark}\label{remarkOnTailsWateringDown}
	The suggested approach shares its local nature with the ones presented in \cite{avanesov2018,2018arXiv180300508A, doi:10.1137/S0040585X97T988411} as they use only a portion of the dataset (of size 2n) to construct a test statistic for time $t$. Alternatively, one could use the whole dataset as in \cite{CIS-43176}, yet, this is not the best option in presence of multiple breaks.
	Consider a setting where a function $\fone$ changes to $\ftwo$ and back to $\fone$ shortly afterwards. The long tails might "water down" the test statistic. To that end a method called Wild Binary Segmentation suggests to choose multiple random continuous sub-datasets of random lengths \cite{fryzlewicz2014}. Unfortunately, this might lead to excessively long sub-datasets and significantly increase computational complexity. Our approach is free of either of these issues. Also see \ref{remarkHostileSetting} for another motivation for an approach of a local nature.
\end{remark}
\begin{remark}
Choice of covariance function $k(\cdot, \cdot)$ and $\sigma^2$ is rather important in practice. Typically, a parametric family of covariance functions $\{k_\theta(\cdot, \cdot)\}_{\theta \in \Theta}$ is considered and the optimal combination of hyper-parameters $\theta$ and $\sigma^2$ is chosen via evidence maximization (see Section 4.5.1 in  \cite{Rasmussen2006} for details).
\end{remark}
The approach being suggested rejects the $\Hnull$ if for some window size $n\in \n$ and some central point $t\in \Tn$ the test statistic $\Ant$ exceeds its corresponding critical level $\xnt$ given the significance level $\alpha$. Formally, the rejection set is
\begin{equation}\label{rejectionSet}
\left\{\exists n\in\n, t \in \Tn: \Ant > \xnt\right\}.
\end{equation}

 As the joint distribution of $\Ant$ is unknown, we mimic it with a residual bootstrap scheme in order to allow for the proper choice of the critical levels. First, let us choose some subset of indices $\Iboot \subseteq 1..N$ we use for bootstrap. We assume the response-covariate pairs $\{(y_i, X_i)\}_{i \in \Iboot}$ follow the same distribution, hence we require $\Iboot$ to be located either to the left, or to the right from $\tau$ (we presume the former without loss of generality). Given a collection of pairs $\{\left(y_i, X_i\right)\}_{i\in \Iboot}$, we construct estimates $\yhat_i$ of $\E{y_i}$ and the corresponding residuals $\epshat_i \coloneqq y_i - \yhat_i$. Now define the bootstrap counterpart of the response $y_i$ as
\begin{equation}\label{key}
\yboot_i = \yhat_i + \epsboot_i, \text{ with } \epsboot_i\coloneqq s_i \epshat_{j_i},
\end{equation}
where for all $i\in 1..N$ we draw $j_i$  independently and uniformly  with replacement from $\Iboot$ and $s_i$ are independently and uniformly drawn from $\{-1,1\}$.
At this point we can trivially define the bootstrap statistics $\Antboot$ in the same way  their real-world counterparts $\Ant$ are defined by plugging in  $\yboot_i$ instead of  $y_i$.
Next, using $\mathbb{P}^\flat$ to denote the bootstrap probability measure, we define the quantile functions for each $\x \in [0,1]$
\begin{equation}\label{key}
\zntboot{\x} \coloneqq \mathrm{inf} \left\{z: \Probboot{\Antboot > z} \le \x  \right\}.
\end{equation}
Finally, we correct the significance level $\alpha$ for multiplicity
\begin{equation}\label{multcorrection}
\alphastar \coloneqq \mathrm{sup} \left\{\x :  \Probboot{\exists n\in \n, t \in \Tn : \Antboot > \zntboot{\x}} \le \alpha \right\}
\end{equation}
and define the critical levels $\xntboot \coloneqq \zntboot{\alphastar}$.

If the method rejects $\Hnull$, one can localize the change-point as follows.
First, define the earliest central point, where $\Hnull$ is rejected
\begin{equation}
\tilde\tau^n \coloneqq \min\{t \in \Tn: \Ant > \xntboot\}.
\end{equation}
Now, if $\Ant > \xntboot$, the change point is located in the interval $[t-n, t+n)$ (up to the significance level $\alpha$). Therefore, we suggest to define the earliest detecting window
\begin{equation}
\nstar \coloneqq \arg\min_{n \in \n} (\tilde\tau^n+n)
\end{equation}
and use the following change-point location estimator
\begin{equation}\label{defhattau}
\hattau \coloneqq \arg\max_{t \in [\tilde\tau^{\nstar}-\nstar,\tilde\tau^{\nstar}+\nstar)} A_{\nmax}(t).
\end{equation}

\begin{remark}
	The estimates $\yhat_i$ may be obtained with any regression method  as long as they are consistent under $\Hnull$. As we strive for a nonparametric methodology, Gaussian Process Regression trained on $\{\left(y_i, X_i\right)\}_{i\in \Iboot}$ is suggested. The theoretical results can be trivially adapted to any kind of a consistent regressor used instead.
\end{remark}

\begin{remark}
	In practice it may be computationally difficult to obtain enough samples of the bootstrap statistics $\Antboot$ for the large number of quantiles to be simultaneously estimated. Alternatively, we suggest to choose the critical levels $\xntboot = x^\flat_{n,\alpha}$ independently of the central point $t$, effectively replacing the rejection region \eqref{rejectionSet} with
	\begin{equation}
	\left\{\exists n\in\n : \max_{t \in \Tn} \Ant > x^\flat_{n,\alpha}\right\}
	\end{equation}
	as the smaller number of quantiles can be reliably estimated based on much fewer number of the samples drawn. Clearly, this may lead to some drop of sensitivity.
\end{remark}
\begin{remark}
	The method can be easily extended for break detection in multivariate regression. In that case one can consider $A_n^l(t)$ for $l$-th component of outcome, alter the calibration scheme  accordingly and make multiplicity correction \eqref{multcorrection} also account for the dimensionality of responses (not only for the windows and break locations).
\end{remark}

\subsection{Multiple break detection}\label{secMult}
In spite of the fact that we allow for at most one break, the local nature of the test statistic $\Ant$ allows for a straightforward application of the test in presence of multiple breaks as well. Again, consider a dataset $\{(X_i,y_i)\}_{i=1}^N$ but assume $\Hone$ allows for multiple change-points $\{\tau_k\}_{k=1}^K$ ($K$ is unknown). Formally, extending the notation $\tau_0\coloneqq1$ and $\tau_{K+1}=N$,

\begin{equation}
	\Hone \coloneqq \begin{rcases}
	\begin{cases}
	&\exists \{\fstar_k\}_{k=1}^{K+1} :\fstar_k \neq \fstar_{k+1}
	\\ &y_i = \fstar_k(X_i) + \eps_i \text{ if }  \tau_{k-1} \leq i < \tau_k\\
	&\text{for all } k
	\end{cases}
	\end{rcases}.
\end{equation}
Then we estimate the location of the first change-point as
\begin{equation}
\hattau_1 \coloneqq \arg\max_{t \in [\tilde\tau^{\nstar}-\nstar,\tilde\tau^{\nstar}+\nstar)} A_{\nmax}(t).
\end{equation}
Next, the procedure is recursively called on the rest of the dataset $\{(X_i, y_i)\}_{i=\tilde\tau^{\nstar}+\nstar}^N$.

\section{Theoretical results}\label{sectheor}
This section is devoted to the theoretical results. Namely, Section \ref{secvalidity} presents the bootstrap validity result, claiming that the critical levels $\xntboot$ yielded by the calibration procedure are indeed chosen in accordance with the critical level $\alpha$. The sensitivity result is reported in Section \ref{secsensitivity}. It defines the minimal window width sufficient for the detection of a break and is also followed by a corollary providing change-point localization guaranties.
\subsection{Assumptions and definitions}
In order to state the theoretical results we need to formulate some assumptions and definitions. Particularly, we rely on definition of sub-Gaussian variables and vectors.
\begin{definition}[Sub-Gaussianity]
We say a centered random variable $x$ is sub-Gaussian with $\g^2$ if
\begin{equation}
\E{\exp(sx)} \le \exp\brac{\g^2s^2/2}, ~~\forall s\in\R.
\end{equation}
We say a centered random vector $X$ is sub-Gaussian with $\g^2$ if for all unit vectors $u$ the product $\dotprod{u}{X}$ is sub-Gaussian with $\g^2$.
\end{definition}
Further, we consider two broad classes of smooth functions: Sobolev and Hölder.
\begin{definition}[Sobolev and Hölder classes]
Consider an orthonormal basis $\{\psi_j\}$ in $L_2(\R^p)$ and a function $f = \sum_j f_j \psi_j \in L_2(\R^p)$. We call it $\aleph$-smooth Sobolev if
\begin{equation}
\exists B : \sum_{j=1}^\infty j^{2\aleph}f_j^2 \le B^2
\end{equation}
and we call it $\aleph$-smooth Hölder if
\begin{equation}
\exists B : \sum_{j=1}^\infty j^{\aleph}\abs{f_j} \le B^2.
\end{equation}
\end{definition}
These properties drive the choice of the regularization parameter $\rho$. Namely, for sample size $M$ large enough we choose
\begin{equation}\label{defrho1}
    \rho = \frac{B^2}{\log M}
\end{equation}
if the function is Sobolev and
\begin{equation}\label{defrho2}
    \rho = \brac{\frac{B^2}{\log M}}^{2\aleph / (2\aleph+1)} \brac{\frac{1}{M}}^{1/(2\aleph+1)}
\end{equation}
if the function is $\aleph$-Hölder.

Throughout the paper we use a variety of norms. We use $\norm{\cdot}$ to denote Euclidean norm of a vector or a spectral norm of a matrix. Further, $\infnorm{\cdot}$ refers to sup-norm for both vectors and matrices (the maximal absolute value of an element), as well as functions (the maximal absolute value of an element of its image), while $\Fnorm{\cdot}$ stands for Frobenius norm of a matrix.

The result \ref{gpconsistency} (by \cite{Yang2017}) we rely upon imposes the following two assumptions.
\begin{assumption}\label{keras1}
    Let there exist $C_\psi$ and $L_\psi$ s.t. for eigenfunctions $\{\psi_j(\cdot)\}_{j=1}^\infty$ of covariance function $k(\cdot, \cdot)$
    \begin{equation}
    \max_j\infnorm{\psi_j} \le C_\psi
    \end{equation}
    and for all $t,s \in \R^p$
    \begin{equation}
    \abs{\psi_j(t) - \psi_j(s)} \le jL_\psi \norm{t-s}.
    \end{equation}
\end{assumption}

\begin{assumption}\label{keras2}
Let for the eigenvalues $\{\mu_j\}_{j=1}^\infty$ of covariance function $k(\cdot, \cdot)$ exist positive $c$ and $C$ s.t. $c j^{-2\aleph} \le \mu_j \le C j^{-2\aleph}$ for $\aleph > 1/2$.
\end{assumption}
Mat\'ern kernel with smoothness index $\aleph-1/2$ satisfy these assumptions. In \cite{Yang2017} the authors claim, their results also hold for kernels with non-polynomially decaying eigenvalues, like RBF and polynomial kernels. And as long as we do not use these assumptions in our proofs directly, so do ours.

Finally, we introduce the assumptions required by our machinery.
\begin{assumption}\label{strangeAssumption}
    Let $\inv{\tilde K_n(t)}$ have the same elements as $\inv{K(\Xt)}$ with exception for the diagonal and $\mathrm{diag} \inv{\tilde K_n(t)} = 0$. Assume, exists a positive $\gamma$ s.t. for all $t \in \Tn$ for $n\rightarrow \infty$
    \begin{equation}\label{strangeEq}
        \infnorm{\inv{\tilde K_n(t)} \E{\yt}} = O(n^\gamma).
    \end{equation}
\end{assumption}
It would be natural to expect $\inv{K(\Xt)}$ in \eqref{strangeEq} instead of $\inv{\tilde K_n(t)}$, e.g.
\begin{equation}\label{strangeEqPrime}
    \infnorm{\inv{ K(\Xt)} \E{\yt}} = O(n^\gamma).
\end{equation}
 On the one hand, if the design $\{X_i\}_{i=1}^N$ is regular, (e.g. a uniform grid), \eqref{strangeEqPrime} implies \eqref{strangeEq}, yet in general, particularly \eqref{strangeEq} is the desired assumption. We prove the bootstrap validity result (\ref{validityTheorem}) using our Gaussian approximation \ref{garlemma}. There we have to treat the diagonal and off-diagonal elements of the quadratic forms separately. This is reminiscent of the results in \cite{goetze} where they study an asymptotic distribution of a single quadratic form (we, in contrast, work with a joint distribution of numerous quadratic forms).

\begin{assumption}\label{assboundedEigenvalues}
    Let there exist a positive constant  $C$ independent of $n$ s.t. $\forall t$
    \begin{equation}
        \norm{K(\Xt)} < C.
    \end{equation}
\end{assumption}
Informally, \ref{strangeAssumption} does not let the GP prior be too unrealistic, while \ref{assboundedEigenvalues} prohibits concentrations of measurements in a local area. Neither would we like \ref{assboundedEigenvalues} violated looking from a practical perspective, as it ensures $K(\Xt)$ being well-conditioned.

\subsection{Bootstrap validity}\label{secvalidity}
In this section we demonstrate closeness of measures $\mathbb{P}$ and $\mathbb{P}^\flat$ in some sense which is a theoretical justification of our choice of the calibration scheme.

\begin{theorem}\label{validityTheorem}
	Let $\Hnull$, \ref{keras1}, \ref{keras2} hold, $\eps_i$ be sub-Gaussian with $\g^2$. Let $\fstar$ be $\aleph$-smooth Sobolev and $\kappa \coloneqq \brac{\aleph-1/2}/(2\aleph)$  or $\aleph$-smooth Hölder and $\kappa\coloneqq \aleph/(2\aleph+1)$.
	Let $\nmin$, $\nmax$, $\abs{\n}$ and $N$ grow. Also assume for some positive $\gamma$ and $\delta$
	\begin{equation}\label{windowKIllsLog}
	\frac{\log^{15}\brac{\abs{\n} N}}{\nmin^{1-6\gamma}} = o(1),
	\end{equation}
	\begin{equation}\label{ibootsizeAssumption}
	\brac{\frac{\log \Ibsize}{\Ibsize}}^{\kappa}\nmax^{1/2+\delta+\gamma} = o(1)
	\end{equation}
	and finally, let \ref{strangeAssumption} hold for $\gamma$.
	Then on a set of arbitrarily high probability
	\begin{equation}\label{ibsizekills}
	\begin{split}
	\sup_{c_n(t)} &\abs{\Prob{\forall n \in \n, t \in \Tn : A_n(t) < c_n(t)} -\right.\\&\left. \Probboot{\forall n \in \n, t \in \Tn :  A^\flat_n(t) <  c_n(t)}} = o(1).
	\end{split}
	\end{equation}
\end{theorem}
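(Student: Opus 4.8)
The plan is to sandwich the joint laws of $\brac{\Ant}_{n\in\n,\,t\in\Tn}$ and $\brac{\Antboot}_{n\in\n,\,t\in\Tn}$ between a common Gaussian surrogate, all comparisons being taken over the family of half-space intersections $\bracCurl{z:\,z_{n,t}<c_n(t)\ \forall\,n,t}$ appearing in \eqref{ibsizekills}. Throughout I would work on an event of arbitrarily high probability (in the data) on which: the Gaussian Process Regression estimates are consistent, i.e.\ $\infnorm{\yhat-\E{\yt}}$ is small uniformly in $t$ at the rate $\brac{\log\Ibsize/\Ibsize}^{\kappa}$ furnished by \ref{gpconsistency}; the empirical noise variance $\sigmahatsq=\Ibsize^{-1}\sum_{j\in\Iboot}\epshat_j^{2}$ obeys $\abs{\sigmahatsq-\sigma^2}=o(1)$; and the noise vectors satisfy the crude sub-Gaussian concentration bounds used below. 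Writing $\Ant=\dotprod{Q_n(t)\yt}{\yt}$ with the symmetric matrix $Q_n(t)=\tfrac12\bracSq{\inv{K(\Xt)}-\mathrm{blockdiag}\brac{\inv{K(\Xlt)},\inv{K(\Xrt)}}}$, I would decompose $\Ant$, under $\Hnull$, as
\[
\Ant=\eps^{\top}Q_n(t)\eps+2\dotprod{Q_n(t)\mu}{\eps}+\dotprod{Q_n(t)\mu}{\mu},\qquad\mu\coloneqq\E{\yt},
\]
where $\eps$ is the noise in the window around $t$; then $\Antboot$ expands identically with $\epsboot$ replacing $\eps$ and $\yhat$ replacing $\mu$ (indeed $\E^{\flat}\epsboot_i=0$, $\E^{\flat}(\epsboot_i)^2=\sigmahatsq$). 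Since $K(\Xt)\succeq\sigma^2 I$ while $\norm{K(\Xt)}<C$ by \ref{assboundedEigenvalues}, the matrix $K(\Xt)$ — and, being a principal submatrix, also $K(\Xlt),K(\Xrt)$ — is uniformly well-conditioned, so $\norm{Q_n(t)}$ is uniformly bounded; together with \ref{strangeAssumption} this controls the Frobenius norm and the diagonal of $Q_n(t)$, the quantities that govern both the Gaussian approximation error and the anti-concentration needed to pass to $\sup_{c_n(t)}$.

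First I would match the deterministic and linear parts. For the deterministic part, $\dotprod{Q_n(t)\yhat}{\yhat}-\dotprod{Q_n(t)\mu}{\mu}=\dotprod{Q_n(t)(\yhat-\mu)}{\yhat+\mu}$, so boundedness of $\fstar$ on the compact $\X$, \ref{strangeAssumption} (bounding $\infnorm{\inv{\tilde K_n(t)}\mu}=O(n^{\gamma})$) and the consistency rate of the high-probability event give $\sup_t\abs{\dotprod{Q_n(t)\yhat}{\yhat}-\dotprod{Q_n(t)\mu}{\mu}}=o(1)$; the assumption \eqref{ibootsizeAssumption} is precisely what forces the product of this rate with the $\nmax^{1/2+\delta+\gamma}$ factors to vanish. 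For the linear parts, $\dotprod{Q_n(t)\mu}{\eps}$ is sub-Gaussian with variance $\sigma^2\normtwo{Q_n(t)\mu}^2$ while its bootstrap analogue $\dotprod{Q_n(t)\yhat}{\epsboot}$ is conditionally sub-Gaussian with variance $\sigmahatsq\normtwo{Q_n(t)\yhat}^2$; the uniform bound on $\norm{Q_n(t)}$, consistency of $\yhat$, and $\sigmahatsq\to\sigma^2$ give $\sigmahatsq\normtwo{Q_n(t)\yhat}^2=\sigma^2\normtwo{Q_n(t)\mu}^2(1+o(1))$ uniformly in $t$, so the two linear parts share a single Gaussian limit.

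Next I would handle the quadratic parts with our Gaussian approximation lemma \ref{garlemma}: it replaces $\brac{\eps^{\top}Q_n(t)\eps}_{n,t}$ by its Gaussian analogue with $\eps$ distributed as $\mathcal{N}(0,\sigma^2 I)$, and — applied conditionally on the data — replaces $\brac{(\epsboot)^{\top}Q_n(t)\epsboot}_{n,t}$ by the Gaussian analogue with $\epsboot$ distributed as $\mathcal{N}(0,\sigmahatsq I)$. As flagged after \ref{strangeAssumption}, the diagonal contribution $\sum_i (Q_n(t))_{ii}\eps_i^2$ (a sum of independent, non-centered summands, matched through $\E{\eps_i^2}=\sigma^2\approx\sigmahatsq$) and the off-diagonal contribution (a mean-zero quadratic form) must be treated separately; the growth condition \eqref{windowKIllsLog} is exactly the budget under which the resulting Gaussian approximation error — carrying $\log\brac{\abs{\n}N}$ from the $\abs{\n}\cdot\abs{\Tn}$ coordinates and the $n^{\gamma}$ factors from \ref{strangeAssumption}, all under a $1/6$-type power — is $o(1)$. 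Finally, since $\sigmahatsq\to\sigma^2$ and $\yhat\to\mu$, the real and bootstrap Gaussian surrogates differ only by $o(1)$ perturbations of their means and covariances, so a Gaussian-to-Gaussian comparison over the half-space family (again using the Gaussian anti-concentration provided by the control on $Q_n(t)$) shows their CDFs coincide up to $o(1)$; chaining $\Prob{\cdot}\approx\Prob{\text{real Gaussian}}\approx\Prob{\text{bootstrap Gaussian}}\approx\Probboot{\cdot}$ by the triangle inequality delivers \eqref{ibsizekills}.

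The hard part will be this Gaussian-approximation step: producing an approximation that is \emph{uniform over the half-space (rectangle) class} for the joint law of a growing number $\abs{\n}\cdot\abs{\Tn}$ of \emph{dependent} quadratic forms, with the separate bookkeeping for diagonal versus off-diagonal terms and with the bootstrap variance only approximately $\sigma^2$. The precise exponents in \eqref{windowKIllsLog} and \eqref{ibootsizeAssumption} — the power $15$ of the logarithm, the $\nmin^{1-6\gamma}$ in the denominator, and the $\nmax^{1/2+\delta+\gamma}$ factor — are dictated by balancing the error terms generated here against the dimension-dependent logarithmic factors.
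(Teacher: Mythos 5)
Your plan follows essentially the same route as the paper's proof: Gaussian approximation of both $\{\Ant\}$ and $\{\Antboot\}$ via the tailored Lindeberg argument with the diagonal/off-diagonal split (\ref{assGarLemma}), then a Gaussian-to-Gaussian comparison whose mean/covariance discrepancies are controlled by the GP consistency rate (\ref{gpconsistency}) and the bootstrap-variance bound (\ref{sigmabootisclose}), closed by anti-concentration and the triangle inequality, with \eqref{windowKIllsLog} and \eqref{ibootsizeAssumption} annihilating the two remainder terms exactly as in the paper. One minor remark: you invoke \ref{assboundedEigenvalues}, which is not among the hypotheses of \ref{validityTheorem} and is not needed here, since $K(\Xt)\succeq\sigma^2 I$ already bounds the spectra of the inverse kernel matrices entering the quadratic forms.
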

Proof of the theorem is given in Section \ref{ProofValiditySec}. The strategy of the proof is typical for bootstrap validity results. First, we approximate the joint distribution of the test statistics $\{\Ant\}_{n\in\n, t\in\Tn}$ with a distribution of some function of a high-dimensional Gaussian vector. This step is handled with our Gaussian approximation result \ref{assGarLemma}. Next, the same is done for their bootstrap counterparts $\{\Antboot\}_{n\in\n, t\in\Tn}$ using a different Gaussian vector. Finally, we build the bridge between the two approximating distributions using the fact that the mean and variance of these Gaussian vectors are close to each other (see \ref{lemmacomparison}).
The assumptions \eqref{windowKIllsLog} and \eqref{ibootsizeAssumption} enforce negligibility of the remainder terms involved in \ref{assGarLemma} and \ref{lemmacomparison} respectively.
In turn, the Gaussian approximation result (\ref{assGarLemma}) is obtained using a novel, significantly tailored version of Lindeberg principle \cite{paulauskas2012approximation, rotar, chatterjee2005, chatterjee2006}. The proof of Gaussian comparison result (\ref{lemmacomparison}) is inspired by the technique used in \cite{chernComp}. We use Slepian smart interpolant too, yet applying it in a non-trivial way. We believe, \ref{assGarLemma} can also be proven via Slepian smart interpolant instead of Lindeberg principle, which might yield slightly better convergence rate. We leave this for the future research.

\subsection{Sensitivity result}\label{secsensitivity}
Consider a setting under $\Hone$. For simplicity, assume there is a single change point at $\tau$. In order for the break to be detectable we have to impose some discrepancy condition on $\fone$ and $\ftwo$. Moreover, in order to guarantee detection we have to require the choice of covariates $X_i$ to make this discrepancy observed. Keeping that in mind we define the {\it observed break extent}
\begin{equation}\label{defBreakExtent}
\B_n^2 \coloneqq \frac{1}{n} \sum_{i=\tau}^{\tau+n-1} \brac{\fone(X_{i}) - \ftwo(X_{i})}^2.
\end{equation}

\begin{theorem} \label{sensitivityTheorem}
	Let the setting described above hold, $\eps_i$ be sub-Gaussian with $\g^2$. Let $\fstar$, $\fone$, $\ftwo$ be $\aleph$-smooth Sobolev and $\kappa \coloneqq \brac{\aleph-1/2}/(2\aleph)$  or $\aleph$-smooth Hölder and $\kappa\coloneqq \aleph/(2\aleph+1)$. Also let $\nstar \in \n$, $\nstar,N \rightarrow +\infty$ and $\B_{\nstar} \rightarrow 0$. Also impose \ref{keras1}, \ref{keras2}, \ref{strangeAssumption} (for $t < \tau$), \ref{assboundedEigenvalues}, \eqref{windowKIllsLog}, \eqref{ibootsizeAssumption} and
	\begin{equation}\label{deltaRate}
	\B_{\nstar}^{-1}  \brac{\frac{\log \nstar}{ \nstar}}^{\kappa}= o(1).
	\end{equation}
	Then
	\begin{equation}\label{key}
	\Prob{\Hnull \text{ is rejected}} \rightarrow 1.
	\end{equation}
\end{theorem}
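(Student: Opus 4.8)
The plan is to show that under $\Hone$ the test statistic $A_{\nstar}(\tau)$ at the true break point exceeds its critical level $\xntboot$ with probability tending to $1$, which suffices for rejection since the rejection region is a union over $n$ and $t$. I would split the argument into three pieces: (i) a lower bound on $A_{\nstar}(\tau)$ showing it is at least of order $\nstar \B_{\nstar}^2$ up to lower-order fluctuations; (ii) an upper bound on the critical level $\xntboot$ showing it is $o(\nstar \B_{\nstar}^2)$; and (iii) combining the two via \eqref{deltaRate}. Step (ii) is where I would lean on the machinery already developed: the bootstrap samples $\{(y_i,X_i)\}_{i\in\Iboot}$ are drawn from a single homogeneous segment, so the bootstrap statistics $A^\flat_n(t)$ behave as under $\Hnull$, and the Gaussian approximation \ref{assGarLemma} together with the comparison result \ref{lemmacomparison} (exactly as invoked in \ref{validityTheorem}) controls the joint distribution of $\{A^\flat_n(t)\}$. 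In particular the quantiles $z^\flat_{n,\x}(t)$, and hence $\xntboot$, are of order at most $\operatorname{poly}\log(\abs{\n}N)$ times the typical scale of a quadratic form built from $\nmax$ sub-Gaussian residuals, which is $O(\nmax^{1/2+\delta})$ on a high-probability set under assumptions \eqref{windowKIllsLog}, \eqref{ibootsizeAssumption}; crucially this scale is sublinear in $\nstar$.

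For step (i), the key is to unfold the definition $A_{\nstar}(\tau) = L(y^l,X^l) + L(y^r,X^r) - L(y,X)$ with the likelihood $L(y,X) = -\tfrac12 y^\top K(X)^{-1} y$. Write $y = \E{y} + \eps$ on each window. Under $\Hone$, the concatenated mean $\E{y_n(\tau)}$ has a genuine jump of observed size $\B_{\nstar}$ at the center, so the quadratic form $-\tfrac12 \E{y}^\top K^{-1}\E{y}$ evaluated on the full window is strictly larger (more negative) than on the two half-windows fitted separately — this is the deterministic "signal" term, and using GP-regression consistency (\ref{gpconsistency}) together with \ref{assboundedEigenvalues} to bound $\|K(X_n(t))^{-1}\|$ from above, one shows the signal contributes at least $c\,\nstar \B_{\nstar}^2$ for some $c>0$. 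The cross terms $\eps^\top K^{-1}\E{y}$ and the purely stochastic terms $\eps^\top K^{-1}\eps - \E{\cdot}$ are handled by sub-Gaussian concentration (Hanson–Wright for the quadratic part, a standard Gaussian-type tail for the linear part), giving fluctuations of order $\sqrt{\nstar}\,\B_{\nstar}\,\operatorname{poly}\log$ and $\sqrt{\nstar}\,\operatorname{poly}\log$ respectively — both dominated by $\nstar\B_{\nstar}^2$ precisely because \eqref{deltaRate} forces $\B_{\nstar}$ to decay slower than $(\log\nstar/\nstar)^{\kappa}$, and because $\kappa \le 1/2$ so $(\log\nstar/\nstar)^{\kappa} \ge \sqrt{\log\nstar/\nstar}$ up to constants. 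One must also absorb the approximation bias of Gaussian-process regression into this comparison, which is where the smoothness (Sobolev/Hölder) and the choice of $\rho$ in \eqref{defrho1}–\eqref{defrho2} enter, matching the rate in \ref{gpconsistency}.

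Assembling: on an event of probability $\to 1$ we have $A_{\nstar}(\tau) \ge c\,\nstar\B_{\nstar}^2 - O(\sqrt{\nstar}\,\B_{\nstar}\operatorname{poly}\log) - O(\sqrt{\nstar}\operatorname{poly}\log)$ while $\xntboot \le O(\nmax^{1/2+\delta}\operatorname{poly}\log)$; dividing through by $\nstar\B_{\nstar}^2$ and invoking \eqref{deltaRate} (which kills the stochastic fluctuations) together with \eqref{windowKIllsLog}–\eqref{ibootsizeAssumption} (which, as in \ref{validityTheorem}, keep the bootstrap quantile controlled and, since $\nmax$ and $\nstar$ are comparable up to the polynomial budget allowed, make $\xntboot$ negligible relative to the signal) shows $A_{\nstar}(\tau) > \xntboot$ eventually. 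Hence $\Hnull$ is rejected with probability tending to $1$. The main obstacle I anticipate is step (ii): one cannot simply cite \ref{validityTheorem} verbatim because it is stated under $\Hnull$, so one has to re-examine that the Gaussian-approximation and comparison lemmas apply to the bootstrap world alone — which they do, since $\Iboot$ lies in a single regime — and then extract from them an explicit, sublinear-in-$\nstar$ bound on the uniform quantile rather than merely a closeness-of-distributions statement; this bookkeeping of the remainder terms, and checking it is compatible with the regime $\nstar\in\n$, is the delicate part.
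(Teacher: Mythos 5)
Your overall plan (bound $A_{\nstar}(\tau)$ from below, bound the critical level from above, compare via \eqref{deltaRate}) is the same as the paper's, but your step (ii) has a genuine gap that also contaminates step (i). You claim the bootstrap quantile $\xntboot$ is of the order of the fluctuations of a quadratic form of residuals, $O(\nmax^{1/2+\delta}\operatorname{poly}\log)$, i.e.\ sublinear in $\nstar$. This is not true in general: the bootstrap responses are $\yboot_i=\yhat_i+\epsboot_i$, so $A^\flat_n(t)$ is not a centered quadratic form of residuals. Writing, as the paper does, $\Ant=\log p(\yrt)-\log p(\yrt\mid\ylt)$ and expanding around the GP predictive mean $\hatfrt$ and predictive covariance $\hatV$, the statistic (real or bootstrap) contains a non-vanishing ``centering'' of order $n$, namely $\E{T_2}=\E{\normasq{\epsrt}{\invhatV-\invKrt}}\ge 0$ (since $\hatV\preceq\Krt$) together with the mean-dependent deterministic term $-\tfrac12\normasq{\frt}{\invKrt}$; neither is $O(\sqrt{n})$. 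Consequently your absolute bound on $\xntboot$ fails, and symmetrically your lower bound ``$A_{\nstar}(\tau)\ge c\,\nstar\B_{\nstar}^2-\text{fluctuations}$'' is not valid as an absolute statement either, because it drops the same order-$n$ terms (one of which is negative). The comparison only works because these order-$n$ terms are (approximately) \emph{common} to the statistic and to the bootstrap-calibrated critical level; the paper makes this cancellation explicit by centering both at $\E{T_2}$: under null-type behaviour it shows $\abs{\Ant-\E{T_2}}=O(n^{1-2\kappa}\log^{2\kappa}n)$ via $T_1\le c n\Delta_f^2$ (GP consistency, \ref{gpconsistency}), Hanson--Wright for $T_2-\E{T_2}$ and a sub-Gaussian bound for the cross term $T_3$, then transfers this to $\abs{\xntboot-\E{T_2}}=O(n^{1-2\kappa}\log^{2\kappa}n)$ through the bootstrap validity theorem \ref{validityTheorem}, and finally, under $\Hone$, keeps the same bounds on $T_2,T_3$ while lower-bounding the signal by $T_1\ge\tfrac1c\,n\brac{\B_n^2-\Delta_f^2}$, so that \eqref{deltaRate} closes the gap. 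Without this common-centering step your final inequality $c\,\nstar\B_{\nstar}^2>\xntboot$ does not follow, since $\B_{\nstar}\to0$ makes the signal $o(\nstar)$ while the critical level can genuinely be of order $\nstar$.

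A secondary issue: your claim that the ``purely deterministic'' part of the raw decomposition $L(\ylt,\Xlt)+L(\yrt,\Xrt)-L(\yt,\Xt)$ with $y=\E{y}+\eps$ is bounded below by $c\,n\B_n^2$ is not immediate, because that part is a \emph{difference} of quadratic forms involving the cross-covariance block and can even be negative. The paper avoids this by the Schur-complement (conditional-likelihood) representation: the signal appears as $\normasq{\frt-\hatfrt}{\invhatV}$, where $\hatfrt$ approximates $\fone$ on the right window by \ref{gpconsistency}, and the needed spectral control is an \emph{upper} bound on $\eigmax{\hatV}$ (hence a lower bound on $\invhatV$), obtained from \ref{assboundedEigenvalues} via the Schur complement — not merely an upper bound on $\norm{\inv{K(\Xt)}}$ as you suggest. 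Your observation that \ref{validityTheorem} is stated under $\Hnull$ and needs justification when invoked here (the bootstrap world is built from the homogeneous segment $\Iboot$, and \ref{strangeAssumption} is imposed for $t<\tau$) is a fair point — the paper simply cites the theorem — but the correct use of it is to control the \emph{deviation} of $\xntboot$ from $\E{T_2}$, not to produce an absolute sublinear bound on $\xntboot$.
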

We defer proof to Appendix \ref{sensProofsec}. It is fairly straightforward. First, we bound the test statistics $A_n(\tau)$ with high probability, next we use \ref{validityTheorem} to also bound the critical levels $x_{n,\alpha}(\tau)$ and finally, we bound the test statistic $A_n(\tau)$ from below and make sure it exceeds the critical level. The assumption \eqref{deltaRate} essentially requires the observed break extent to exceed the precision of Gaussian Process Regression predictor.
\begin{remark}\label{remarkHostileSetting}
	The sensitivity result gives rise to another motivation behind simultaneous consideration of wider and narrower windows (and also it is another argument for local statistics in the first place, also see \ref{remarkOnTailsWateringDown}). Consider a hostile setting, where the values of functions $\fone$ and $\ftwo$ coincide for most of the arguments. For instance, let $\B'\coloneqq \abs{\fone(X_\tau) -\ftwo(X_\tau)}$ and let $\fone(X_i) = \ftwo(X_i)$ for all $i > \tau$. Then by definition $\B_n = \B'/n$ and hence the assumption \eqref{deltaRate} implies
	\begin{equation}
		\B'^{-1}\nstar^{1-\kappa}\log^\kappa\nstar = o(1).
	\end{equation}
	Clearly, a narrower window detects a smaller break of such a kind.
\end{remark}
\begin{remark}\label{remarkMult}
	In the setting allowing for multiple change-points (see Section \ref{secMult}) assumption \eqref{deltaRate} dictates the requirement for the minimal distance $\Delta_\tau \coloneqq \min_{k,k':k\neq k'} \abs{\tau_k - \tau_{k'}} $ between two consecutive change-points as $\Delta_\tau \ge 2\nstar + \abs{\Iboot}$ which is sufficient for detection of all the change-points with probability approaching $1$.
\end{remark}
Finally, we formulate a trivial corollary providing change-point localization guaranties.
\begin{corollary}
	Under the assumptions of  \ref{sensitivityTheorem}
	\begin{equation}
		\Prob{\abs{\tilde\tau - \tau} \le \nstar} \gtrsim 1-\alpha.
	\end{equation}
\end{corollary}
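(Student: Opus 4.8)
The plan is to deduce the localization bound directly from the internal logic of the detection procedure, using the conclusion of Theorem~\ref{sensitivityTheorem} together with the window-width bookkeeping built into the definitions of $\tilde\tau^n$ and $\tilde\tau$. First I would observe that under the hypotheses of Theorem~\ref{sensitivityTheorem} the event $\mathcal{E}$ that $\Hnull$ is rejected has probability tending to $1$; on $\mathcal{E}$ the quantity $\tilde\tau^{\nstar}$ is well-defined, being the earliest central point $t\in\Tn$ with $A_{\nstar}(t) > x^\flat_{\nstar,\alpha}(t)$, and by the minimality in $\nstar = \arg\min_{n\in\n}(\tilde\tau^n+n)$ the interval $[\tilde\tau^{\nstar}-\nstar,\tilde\tau^{\nstar}+\nstar)$ is the one we localize in. The core claim is that with probability at least $1-\alpha-o(1)$ this interval actually contains $\tau$, so that $\abs{\tilde\tau-\tau}\le\nstar$ since $\tilde\tau$ is chosen inside that interval.

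The argument splits into two inclusions. For the lower side, I would argue that, on $\mathcal{E}$, $\tilde\tau^{\nstar}+\nstar \ge \tau$ always: if $\tilde\tau^{\nstar}+\nstar < \tau$ then the window $[\tilde\tau^{\nstar}-\nstar,\tilde\tau^{\nstar}+\nstar)$ lies entirely to the left of $\tau$, so both sub-windows $X^l_{\nstar}(\tilde\tau^{\nstar})$ and $X^r_{\nstar}(\tilde\tau^{\nstar})$ see only the single regression function $\fone$, and the exceedance $A_{\nstar}(\tilde\tau^{\nstar}) > x^\flat_{\nstar,\alpha}(\tilde\tau^{\nstar})$ is exactly a false rejection on a purely-$\Hnull$ stretch of data. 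The bootstrap-validity Theorem~\ref{validityTheorem} controls the joint probability of such exceedances: the critical levels are calibrated so that $\Prob{\exists n, t: A_n(t) > x^\flat_{n,\alpha}(t)}$ over windows that fall within a homogeneous region is at most $\alpha + o(1)$. Hence with probability $\ge 1-\alpha-o(1)$ no such premature detection occurs and $\tilde\tau^{\nstar}+\nstar\ge\tau$. For the upper side, Theorem~\ref{sensitivityTheorem} (or rather its proof, which I may invoke) guarantees that $A_{\nstar}(\tau) > x^\flat_{\nstar,\alpha}(\tau)$ with probability $\to 1$, so $\tilde\tau^{\nstar}\le\tau$, whence $\tilde\tau^{\nstar}-\nstar\le\tau-\nstar<\tau$. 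Combining, $\tau\in[\tilde\tau^{\nstar}-\nstar,\tilde\tau^{\nstar}+\nstar)$ on an event of probability $\gtrsim 1-\alpha$, and since $\tilde\tau$ is selected from that very interval, $\abs{\tilde\tau-\tau}\le\nstar$ there.

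The step I expect to be the genuine obstacle is making the lower-side argument airtight: one must be careful that when the first exceedance happens at a window straddling $\tau$ (rather than strictly left of it) the bound $\tilde\tau^{\nstar}+\nstar\ge\tau$ still holds trivially, while when it happens strictly left of $\tau$ one is genuinely in the $\Hnull$ regime and Theorem~\ref{validityTheorem} applies — but Theorem~\ref{validityTheorem} is stated under $\Hnull$ globally, so I would need to note that its conclusion localizes, i.e. only depends on the data in the windows actually used, which all lie in $\{i<\tau\}$ in the bad case. A secondary subtlety is the interaction of $\nstar = \arg\min_n(\tilde\tau^n+n)$ with the choice $n = \nstar$ appearing in the theorem's hypothesis "$\nstar\in\n$"; I would clarify that the $\nstar$ in the corollary is the data-driven earliest-detecting window of \eqref{defhattau}, and that the sensitivity hypothesis ensures this random $\nstar$ is, with probability $\to1$, no larger than the deterministic window for which \eqref{deltaRate} holds, so the localization radius is controlled. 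The rest is routine union bounding of the two $o(1)$ error events against the $\alpha$-budget.
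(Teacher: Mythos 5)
Your argument is correct and is precisely the intended one: the paper offers no proof (it labels the corollary trivial), and your two-sided bracket --- $\tilde\tau^{\nstar}\le\tau$ because the sensitivity proof gives $A_{\nstar}(\tau)>x^\flat_{\nstar,\alpha}(\tau)$ with probability tending to one, and $\tilde\tau^{\nstar}+\nstar\ge\tau$ because an earlier exceedance would be a false alarm on a purely pre-break stretch, controlled at level $\alpha+o(1)$ by the bootstrap validity applied to the homogeneous sub-collection (exactly the localized use the paper itself makes of \ref{validityTheorem} inside the proof of \ref{sensitivityTheorem}) --- is exactly how the claim follows, with $\tilde\tau$ read as $\tilde\tau^{\nstar}$ for the window $\nstar$ of the theorem. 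One small caution: state the conclusion directly from this bracket rather than via your closing clause ``$\tilde\tau$ is selected from that interval,'' since for the arg-max estimator $\hattau$ of \eqref{defhattau} membership of both $\hattau$ and $\tau$ in a length-$2\nstar$ interval would only yield the weaker bound $2\nstar$, and likewise your side remark that the data-driven earliest-detecting window is w.h.p.\ no larger than the deterministic $\nstar$ is neither needed nor justified under this reading.
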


\section{Empirical Study}\label{secexper}
In this section we report the results of our experiments\footnote{The code is available at \url{github.com/akopich/gpcd}}. Section \ref{secsynth} presents the findings of the simulation study supporting the bootstrap validity and sensitivity results, as well as empirically justifying the simultaneous use of multiple windows and the change-point location estimator \ref{defhattau}. In Section \ref{seqnasdaq} we successfully apply the method to detect change-points in daily quotes of NASDAQ Composite index.
\subsection{Experiment on synthetic data} \label{secsynth}
We consider functions $\fstar(x) = \fone(x) = \sin(x)$ and $\ftwo(x) = \sin(x + \phi)$ for various choices of $\phi$. Univariate covariates $\{x_i\}_{i=1}^{800}$ are shuffled $800$ equidistant points between $0$ and $\pi$. Under $\Hnull$ the responses are sampled independently as $y_i \sim \N{\fstar(x_i)}{0.1^2}$. Under $\Hone$ we choose the change-point location  $\tau = 700$ and sample $y_i \sim \N{\fone(x_i)}{0.1^2}$ for $i < \tau$ and  $y_i \sim \N{\ftwo(x_i)}{0.1^2}$ for $i \ge \tau$.
For our experiments we consider $\phi \in \{\pi/2,\pi/5, \pi/10, \pi/20, \pi/40\}$ and report the corresponding observed break extent $\B_n$ (defined by \eqref{defBreakExtent}). In all the experiments $\Iboot = \{1, 2, .., 500\}$, the confidence level $\alpha$ was chosen to be $0.01$. We choose RBF kernel family
\begin{equation}\label{defrbf}
k_\theta(x_1, x_2) = \theta_1^2\exp \brac{-\frac{|x_1-x_2|^2}{\theta_2^2}}
\end{equation}
and choose optimal parameters $\theta$ and $\sigma^2$ via evidence maximization using $\{x_i\}_{i \in \Iboot}$.

The suggested approach has demonstrated proper control of the first type error rate in all the configurations we consider, keeping it below $0.015$.

The power the test exhibits is shown on Figure \ref{powerOnWindow}. As expected, larger window size $n$ and larger observed break extent $\B_n$ correspond to higher power. At the same time, the Figure \ref{rmseOnWindow} summarizes root mean squared errors of the estimator $\hattau$ (defined by \eqref{defhattau}). The estimator proves itself to be reliable when the power of the test is high. Generally,  wider windows and larger observable break extent lead to higher accuracy of $\hattau$.

Further, in order to investigate the behavior of the method in a multiscale regime ($\abs{\n}>1$) we use several choices of $\n$ for a single $\phi=\pi/10$. Results, reported in the Table \ref{multiscaletable}, exhibit a significant decrease in the average width of the narrowest detecting window $\nstar$ and hence an improvement in change-point localization thanks to simultaneous use of wider and narrower windows. This should be highly beneficial in presence of multiple change points, as it allows for smaller distance $\Delta_\tau$ between them (see Section \ref{secMult} and \ref{remarkMult}).

\begin{figure}
  \includegraphics[width=\textwidth]{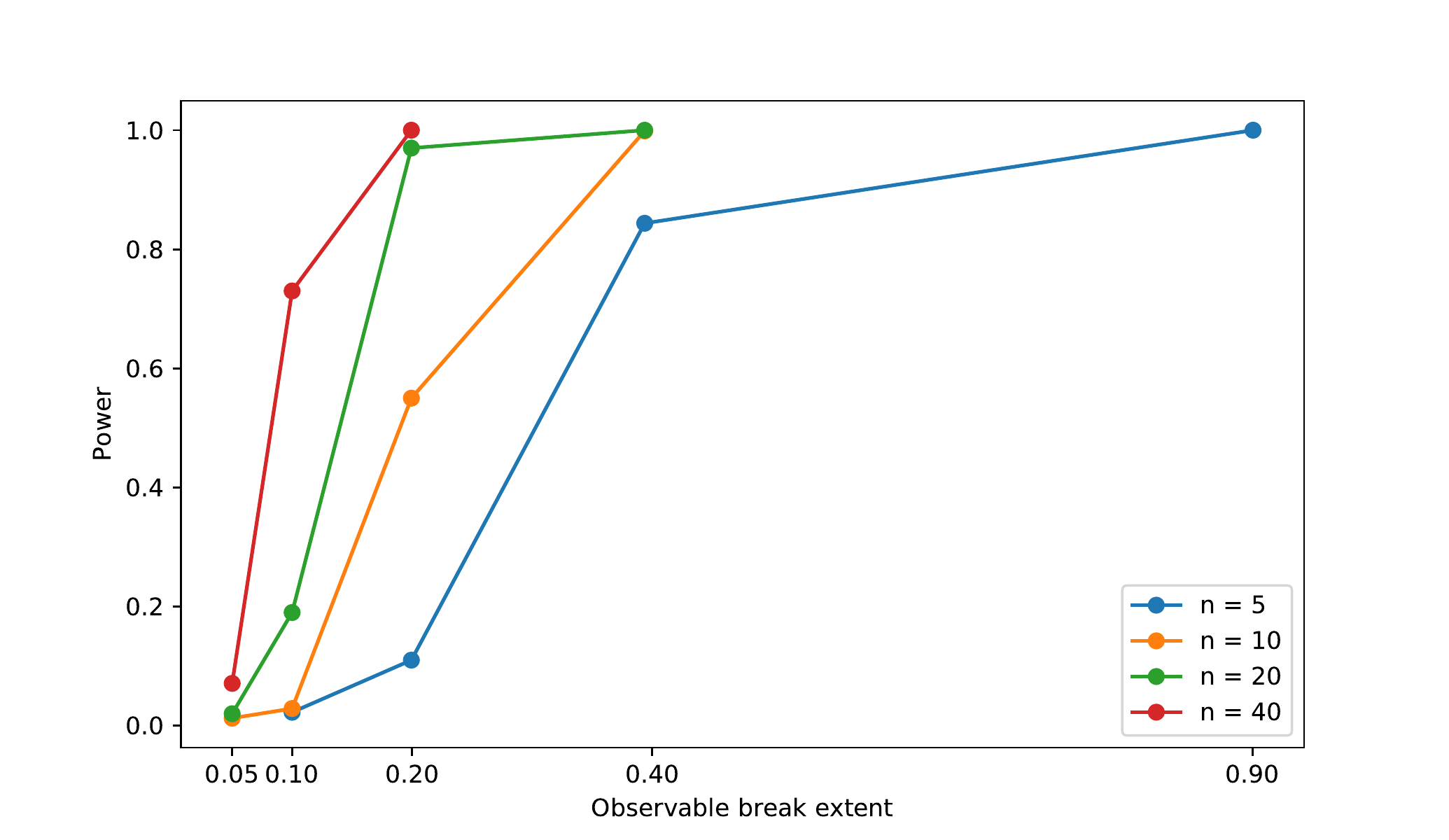}
  \caption{The plot demonstrates the dependence of the power of the test on the observable break extent $\B_{n}$ (see  \eqref{defBreakExtent}) for multiple window sizes $n$.}
  \label{powerOnWindow}
\end{figure}
\begin{figure}
  \includegraphics[width=\textwidth]{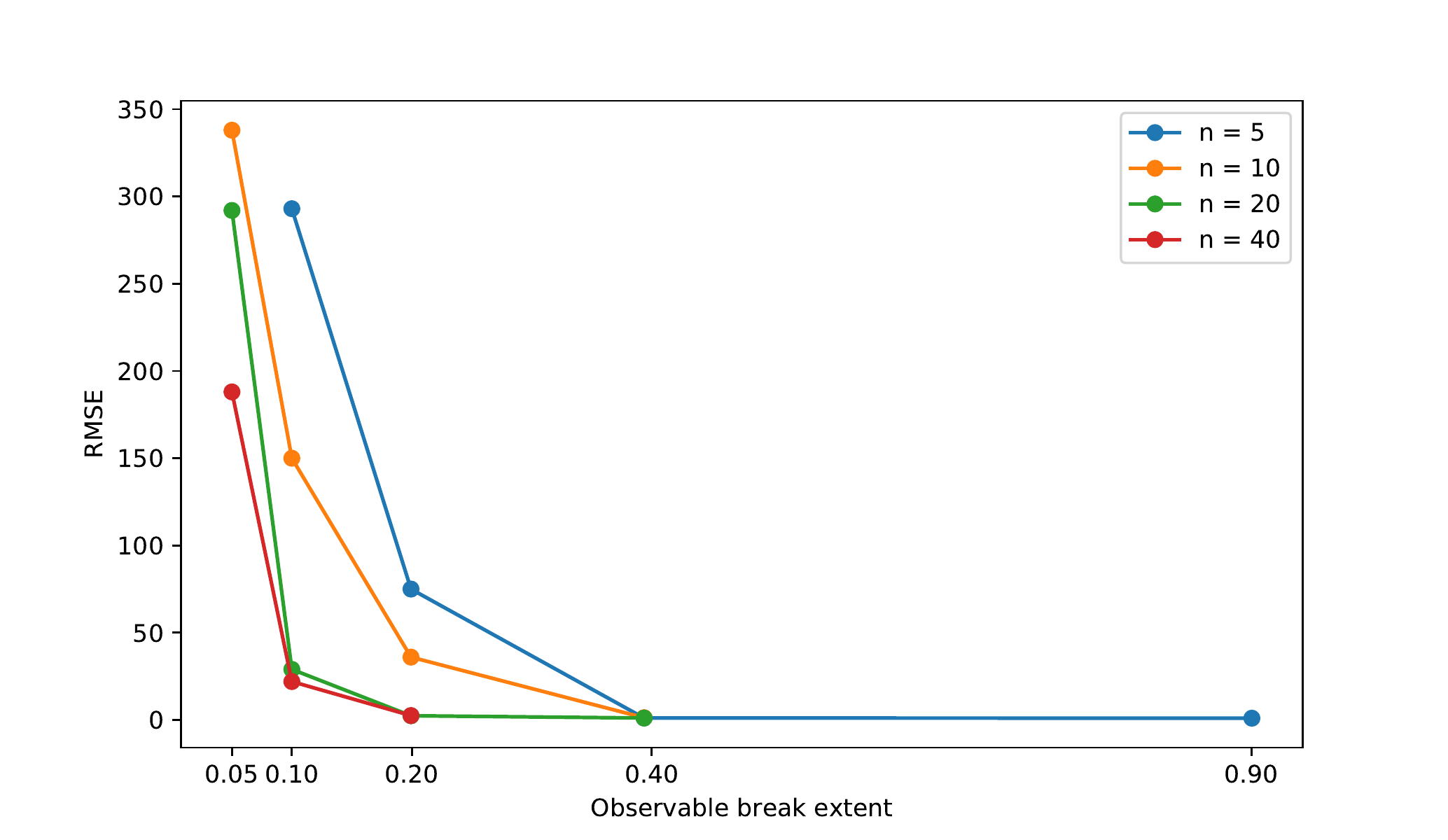}
  \caption{The plot demonstrates the dependence of the root mean squared error of change point localization on the observable break extent $\B_n$ (see \eqref{defBreakExtent}) for multiple window sizes $n$.}
  \label{rmseOnWindow}
\end{figure}

\begin{table}\caption{\label{multiscaletable} This table demonstrates average width of the narrowest detecting window $\nstar$ (from \ref{sensitivityTheorem}) may be reduced employing multiple window sizes at once without noticeable loss of power.}
\centering
\begin{tabular}{|c|c|c|}
\hline
$\n$                      & Power  & $\nstar$ \\ \hline
$\{40\}$            & 1.0   & 40.0   \\ \hline
$\{40, 20\}$        & 1.0   & 20.5  \\ \hline
$\{40, 20, 10\}$    & 1.0  & 15.7  \\ \hline
$\{40, 20, 10, 5\}$    & 1.0  & 15.9  \\ \hline
\end{tabular}
\end{table}

\subsection{Real-world dataset experiment}\label{seqnasdaq}
The prices of stock indexes are known to be subject to abrupt breaks \cite{NIPS2011_4286, pmlr-v22-stimberg12}. We consider a series $X_t$ of closing daily prices of NASDAQ Composite index. The dataset spans from February 1990 until February 2019. We suggest to model the process using the following Stochastic Differential equation
\begin{equation}
\frac{dX_t}{X_t} = f(X_t) dt + \sigma dW_t\text{, } \sigma>0,
\end{equation}
where $W_t$ denotes a Wiener process. Now we wish to test the dataset for the presence of breaks. In order to do so we employ the Euler–Maruyama method, effectively boiling the problem down to a regression problem with univariate covariates $x_t\coloneqq X_t$ and the corresponding responses $y_t \coloneqq \frac{X_{t+1}-X_t}{X_t}$. Further we apply the scheme suggested in Section \ref{secMult} with $\alpha=0.01$, $\n=\{20\}$, $\Iboot=\{1..300\}$ and the kernel family \eqref{defrbf}. The method detects three breaks and all of them may be related to the known events. Namely, computer virus CIH has activated itself and attacked Windows 9x in August 1998, burst of the dot-com bubble and 2008 financial crisis.

\section*{Acknowledgements}
The research of ``Project Approximative Bayesian inference and model selection for stochastic differential equations (SDEs)'' has been partially funded by Deutsche Forschungsgemeinschaft (DFG) through grant CRC 1294 ``Data Assimilation'', ``Project Approximative Bayesian inference and model selection for stochastic differential equations (SDEs)''.

Further, we would like to thank Vladimir Spokoiny, Alexandra Carpentier and Evgeniya Sokolova for the discussions and/or proofreading which have greatly improved the manuscript.

	\appendix
\section{Proof of the bootstrap validity result}\label{ProofValiditySec}
\begin{proof}[Proof of \ref{validityTheorem}]
	Apply \ref{assGarLemma} to $\Ant$ and $\Antboot$, next apply \ref{lemmacomparison} and via triangle inequality obtain on a set of probability at least $1-2\exp(-\u^2)$
	\begin{equation}\label{key}
	\sup_{c_t} \abs{\Prob{A_n(t) < c_t} - \Probboot{A^\flat_n(t) < c_t}}  \le 2R_A + R_C,
	\end{equation}
	where $R_A$ and $R_C$ come from \ref{assGarLemma} and \ref{lemmacomparison} respectively.
	Now observe
	\begin{equation}
		\abs{J} = \sum_{n \in \n } \abs{\Tn} \le \abs{\n}N
	\end{equation}
	and using \eqref{windowKIllsLog} conclude $R_A = o(1)$. Clearly, the ratio entering the definition of $R_C$ is bounded $\sqrt{n}/\s = O(1)$ (in the same way as in the proof of \ref{assGarLemma}). Next we use \ref{gpconsistency} and obtain on a set of probability at least $1-\Ibsize^{-10}$
	\begin{equation}\label{key}
	\infnorm{\E{y} - \yhat} \le O(\Delta_f),
	\end{equation}
	where
	\begin{equation}
		\Delta_f \coloneqq \brac{\frac{\log \Ibsize}{\Ibsize}}^{\kappa}.
	\end{equation}
	Now observe that the following holds for $\Delta_{\mu}$ and $\Delta_{\Sigma}$ involved in \ref{lemmacomparison} by construction of $Z$ and $\tilde Z$ (coming from the gaussian approximation and defined by \eqref{Ztdef})
	\begin{equation}\label{key}
	\Delta_{\mu} \le \brac{\Delta_f+\infnorm{\fstar}}\Delta_f,
	\end{equation}
	\begin{equation}\label{key}
	\Delta_{\Sigma} \le 4{\abs{\Var{\eps_1} - \Var{\epsboot_1}}}\infnorm{\fstar}^2 + \abs{\Var{\eps_1} - \Var{\epsboot_1}}^2.
	\end{equation}
	Further, \ref{sigmabootisclose} yields the bound   $\abs{\Var{\eps_1} - \Var{\epsboot_1}}=O(\Delta_f^2)$.
	Assumption \eqref{windowKIllsLog} implies $\gamma < 1/6$. Then \eqref{ibootsizeAssumption} it turn implies
	\begin{equation}
	\brac{\frac{\log \Ibsize}{\Ibsize}}^{\kappa}\nmax^{\delta/2+\gamma} = o(1).
	\end{equation}
	 Finally, choose $\Delta = n^{-\delta/2}$ (involved in the definition of $R_C$, see \ref{lemmacomparison}), recall assumption \eqref{ibootsizeAssumption} and conclude $R_C = o(1)$.
\end{proof}

\section{Gaussian Approximation}\label{sectionGAR}

Consider a random vector $x \in \R^N$ of independent components centered at $\mu = \E{x}$. Introduce $x_n(j)$ for even $n = 2m \in \mathbb{N}$ and $j \in J \coloneqq \{m, m+1,..,N-m\}$ denoting a vector composed of $\{x_i\}_{i=j-m+1}^{j+m}$. Also, assume $\abs{J}$ symmetric matrices $B(j) \in \R^{n\times n}$ are given and define a map $S: \R^N \rightarrow \R^{\abs{J}}$ s. t. $S(x)_j \coloneqq \frac{1}{\sqrt{n}}\dotprod{\xnj}{ B(j) \xnj}$.

Two ingredients of paramount importance are soft-max function $\Fbeta : \R^{\abs{J}} \rightarrow \R$
\begin{equation}\label{key}
\Fbeta(w) \coloneqq \log \brac{\sum_{j \in J} \exp\brac{\beta w_j}} \text{ for } \beta\in\R
\end{equation}
and a smooth indicator function $g_\Delta$ with three bounded derivatives s.t. $\abs{x} \ge \Delta \Rightarrow g_\Delta(x) = 1[x > 0]$. Also let $g \coloneqq g_1$ and $g(x/\Delta) = \gDelta(x)$. An example of such function along with bounds for its derivatives is provided in \cite{doi:10.1137/S0040585X97T988411}.

Consider the following decomposition of matrices $\Bj$ into diagonal matrices and matrices with zeroes down their diagonals
\begin{equation}\label{diagoffdiagdecomposition}
\Bj = \Ej + diag(\Dj)\text{, where } \Ej_{kk} = 0 ~\forall k.
\end{equation}
Further, consider a vector $X\in \R^N$ s.t. $x_i^2 = X_i$ for all $i = 1..N$. And introduce notation $\Xnj$ similar to $\xnj$. Now consider a vector $Z$ denoting vectors $x$ and $X$ stacked. Clearly, there is a map $Q: \R^{2N}\rightarrow \RJ$ s.t.:
\begin{equation}\label{Qdef}
S(x)_j = Q(Z)_j \coloneqq Q(x, X)_j \coloneqq \frac{1}{\sqrt{n}}\dotprod{\xnj}{ E(j) \xnj}  + \frac{1}{\sqrt{n}}\dotprod{D(j) }{ \Xnj}
\end{equation}
for all $j=1..\abs{J}$. Also define an independent vector \begin{equation}\label{Ztdef}
\Zt \sim \N{\E{Z}}{\Var{Z}}
\end{equation} and denote the first half of the vector as $\xt$ and the second as $\tilde X$.

Our proof employs a novel version of the Lindeberg principle \cite{paulauskas2012approximation, rotar, chatterjee2005, chatterjee2006} tuned for the problem at hand. Typically, Lindeberg principle suggests to "replace" random variables with their Gaussian counterparts one by one. Here we have to "replace" each $n$-th component of $x$ along with the component of $X$ being its square starting with the $1$-st one, repeat starting with the $2$-nd one and so on repeating the procedure $n$ times.
Namely, in the first step we "replace" components with indexes $1$, $n+1$, $2n+1$ and so on. On the second step we "replace" components with indexes $2$, $n+2$, $2n+2$ and so on. And further in the same manner. Or more formally, consider a sequence of vectors $x^i \in \R^N$ for $i = 0..n$ s. t. $x^0=x$ and  $\forall i>0: x^{i}_{kn+i} = \tilde x_{kn+i}$ for all $k \in \{0, 1, 2, ..,\lceil N/n\rceil- 1\}$ and $x^{i}_{j} = x^{i-1}_{j}$ for $j$ s.t. $\nexists k \in \{0, 1, 2, ..,\lceil N/n\rceil-1 \} : kn+i=j$. Denote the indexes of components which were replaced at step $i$ as $I_i$. Also define a vector $\xnull^i$ s.t. $\xnull^i_j = 0$ for $j \in I_i$ and $\xnull^i_j = x^i_j$ for the rest of $j$. Define sequence of $X^i$ and $\mathring{X}^i$ in a similar way. Finally, let $Z^i$ denote the vectors $x^i$ and $X^i$ stacked together and $\Znull_i$ denote stacked vectors $\xnull^i$ and $\mathring{X}^i$. Note, $Z^n = \Zt$.

\begin{lemma}\label{boundPhi}
	Choose $i=1..n$. Consider a function $\phi : \R^N\times\R^N \rightarrow \R$ defined as
	\begin{equation}\label{key}
	\phi(a,b) \coloneqq \Fbeta\brac{Q\brac{\xnull^i+a, \Xnull^i+b}},
	\end{equation}
	where $j\notin I_i \Rightarrow a_j = 0,~b_j = 0$ and $Q(\cdot)$ is defined by \eqref{Qdef}. Further, using decomposition \eqref{diagoffdiagdecomposition} assume for some positive $L$:
	\begin{equation}\label{Lbound}
	\max_j \infnorm{\Ej(\xnull^i_n(j)+a_n(j))}< L
	\end{equation}
	and denote
	\begin{equation}\label{Ldef}
		\L \coloneqq \max\left\{2L, \max_{j \in J} \infnorm{D(j)}\right\}.
	\end{equation}
	Then
	\begin{equation}\label{key}
	\abs{\sum_{j\in I_i} \brac{\partial_{a_j}+\partial_{b_j}}\phi(a,b)} \le \varphi'\coloneqq \frac{2\L}{\sqrt{n}},
	\end{equation}
	\begin{equation}\label{key}
	\abs{\sum_{j,j'\in I_i} \brac{\partial^2_{a_ja_{j'}}+\partial^2_{b_jb_{j'}}}\phi(a,b)} \le \varphi ''\coloneqq \frac{4\beta\L^2}{n}
	\end{equation}
	\begin{equation}\label{key}
	\abs{\sum_{j,j',j''\in I_i} \brac{\partial^3_{a_ja_{j'}a_{j''}}+\partial^3_{b_jb_{j'}b_{j''}}}\phi(a,b)} \le \varphi'''\coloneqq \frac{12\beta^2\L^3}{n^{3/2}}.
	\end{equation}
\end{lemma}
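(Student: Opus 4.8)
The plan is to exploit the fact that $\phi(a,b)$ depends on $(a,b)$ only through the composition $\Fbeta \circ Q$, and that $Q$ is a quadratic–plus–linear map with the \emph{crucial} structural feature that the off-diagonal matrices $\Ej$ have vanishing diagonal. Fix $i$ and write $w_j \coloneqq Q(\xnull^i + a, \Xnull^i + b)_j$, so $\phi = \Fbeta(w)$. The first observation is that, because $j \in I_i$ indexes components that are spaced exactly $n$ apart, the block $\xnj$ of any central point $j'$ contains \emph{at most one} coordinate whose index lies in $I_i$; consequently the quadratic term $\tfrac{1}{\sqrt n}\dotprod{\xnj}{\Ej \xnj}$, once we differentiate in a single $a_k$ with $k \in I_i$, loses its quadratic character — the diagonal entry $\Ej_{kk}=0$ kills the term that would have been quadratic in $a_k$, leaving only the cross terms $\tfrac{2}{\sqrt n}(\Ej (\xnull^i_n(j)+a_n(j)))_k$, which is exactly where the bound \eqref{Lbound} enters. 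So $\partial_{a_k} w_{j}$ is bounded by $2L/\sqrt n$ and $\partial_{b_k} w_j$ by $\infnorm{D(j)}/\sqrt n$, hence each $\partial_{a_k}w_j + \partial_{b_k}w_j$ is bounded in absolute value by $\L/\sqrt n$ with $\L$ as in \eqref{Ldef}; moreover, and this is the second key point, for a fixed $k\in I_i$ only those $j$ whose window contains index $k$ give a nonzero derivative, and — since $\sum_j \partial_{w_j}\Fbeta(w) = 1$ with all summands nonnegative (softmax gradient is a probability vector) — summing over $k\in I_i$ the contributions telescope through the softmax weights without accumulating a factor $|I_i|$.

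Next I would carry out the chain rule bookkeeping, treating $\Fbeta$ as the outer function. Its gradient $\nabla\Fbeta$ is the softmax vector $p$ with $\sum_j p_j = 1$, $p_j \ge 0$; its Hessian is $\beta(\mathrm{diag}(p) - pp^T)$, whose entries sum in absolute value to at most $2\beta$; its third derivative tensor has $\ell_1$-type norm bounded by a constant times $\beta^2$. For the first-order bound, $\sum_{k\in I_i}(\partial_{a_k}+\partial_{b_k})\phi = \sum_j p_j \sum_{k\in I_i}(\partial_{a_k}+\partial_{b_k})w_j$; using $|\sum_{k\in I_i}(\partial_{a_k}+\partial_{b_k})w_j| \le 2\L/\sqrt n$ for every fixed $j$ (again because at most one index of $I_i$ sits in window $j$, so the inner sum has at most one term of size $\le \L/\sqrt n$ from the $a$-part and one from the $b$-part) and $\sum_j p_j = 1$ gives $\varphi' = 2\L/\sqrt n$. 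For the second-order bound I would expand $\sum_{k,k'\in I_i}(\partial^2_{a_ka_{k'}}+\partial^2_{b_kb_{k'}})\phi$ into (i) the term where the two derivatives both hit $\Fbeta$, producing the Hessian of $\Fbeta$ contracted against two gradient vectors of $w$, each of sup-size $\le 2\L/\sqrt n$, and (ii) the term where both derivatives hit the same $w_j$, i.e. $\sum_j p_j \partial^2 w_j$ — but here $\partial^2_{a_k a_{k'}} w_j$ is nonzero only for the single $k=k'$ in window $j$ and equals $\tfrac{2}{\sqrt n}\Ej_{kk} = 0$, so (ii) vanishes entirely, and (i) is bounded by $\beta\cdot 2 \cdot (2\L/\sqrt n)^2 = 4\beta\L^2/n = \varphi''$. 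The third-order bound follows by the same split into terms classified by how the three derivatives distribute across the $\Fbeta$ slots: every term in which two or three derivatives land on a single $w_j$ vanishes by the zero-diagonal property, the surviving term contracts the third derivative of $\Fbeta$ (size $\le$ const$\cdot\beta^2$, and one checks the constant is $3$ with the right normalization) against three copies of $\nabla w$ of sup-size $\le 2\L/\sqrt n$, giving $12\beta^2\L^3/n^{3/2} = \varphi'''$.

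The main obstacle — and the part that requires genuine care rather than routine bookkeeping — is the combinatorics of \emph{which} derivatives survive and verifying the exact numerical constants $2$, $4$, $12$. The vanishing of all "second-and-higher order in a single coordinate" contributions hinges entirely on $\mathrm{diag}(\Ej) = 0$ together with the index-spacing fact $|I_i \cap \{j-m+1,\dots,j+m\}| \le 1$, so I would state and prove that spacing fact cleanly up front. The constants then come from: each coordinate contributes an $a$-derivative and a $b$-derivative (factor $2$), the off-diagonal quadratic derivative carries the factor $2$ from symmetry of $\Ej$ (already absorbed into $\L = \max\{2L,\dots\}$), and $\Fbeta$'s derivative tensors of orders $2$ and $3$ carry $\beta$ and $\beta^2$ with combinatorial multiplicities that multiply out to $4$ and $12$ respectively when one uses $\|p\|_1 = 1$ to collapse the sums over $j$. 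I would verify these by writing $\Fbeta(w) = \log\sum_j e^{\beta w_j}$ explicitly, differentiating three times, and bounding each resulting term using $0 \le p_j \le 1$, $\sum_j p_j = 1$; the tensor-norm estimates $\|\nabla\Fbeta\|_1 = 1$, $\|\nabla^2\Fbeta\|_{1\to 1}\le 2\beta$, $\|\nabla^3\Fbeta\|\le$ const$\cdot\beta^2$ are the quantitative heart of the argument and are where I would spend the most effort.
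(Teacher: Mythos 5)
Your plan is correct and takes essentially the same route as the paper, whose proof is precisely ``direct differentiation'' (using $\mathrm{diag}\,E(j)=0$ together with the fact that each length-$n$ window contains at most one index from $I_i$, so all second- and third-order derivatives of each $Q_j$ in the $I_i$-variables vanish) combined with the soft-max derivative bounds of Lemma A.2 in Chernozhukov--Chetverikov--Kato, exactly the two ingredients you identify. Only your intermediate constant bookkeeping is slightly off (the $\ell_1$-bounds for the soft-max Hessian and third-derivative tensor are $2\beta$ and $6\beta^2$, and each of the $a$- and $b$-gradient sums is $\le \L/\sqrt n$ rather than $2\L/\sqrt n$ since $2L\le\L$), and with that accounting the products collapse to the stated $2\L/\sqrt n$, $4\beta\L^2/n$ and $12\beta^2\L^3/n^{3/2}$.
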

\begin{proof}
	Proof of this result consists in direct differentiation followed by application of Lemma A.2 from \cite{chernozhukov2013}  providing bounds for the first three derivatives of soft-max function.
\end{proof}

\begin{lemma} \label{lemmatelescope} Let assumptions of \ref{boundPhi} hold. Then for an independent Gaussian vector $\Zt$ (defined by \eqref{Ztdef})
	\begin{equation}\label{key}
	\abs{\E{\gDelta\circ \Fbeta \circ S(x)} - \E{\gDelta\circ \Fbeta \circ Q(\Zt)}} \le \frac{n}{6}  \Z m_3,
	\end{equation}
	where $m_3$ is the sum of the maximal third centered absolute moments of $x$ and $\Zt$, while $\Z$ is defined in \ref{onetermdeeper} and $Q(\cdot)$ is defined by \eqref{Qdef}.
\end{lemma}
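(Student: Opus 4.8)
The plan is to run a Lindeberg-type replacement along the sequence $Z^0 = Z, Z^1, \dots, Z^n = \Zt$ constructed above, in which at step $i$ the coordinates in the block $I_i$ — together with the corresponding squared coordinates sitting in the second half of the stacked vector — are swapped from their true values $(x_j, x_j^2)$ to the Gaussian values $(\xt_j, \tilde X_j)$. Since $Q(Z^0) = S(x)$ by \eqref{Qdef} and $Z^n = \Zt$, the quantity of interest telescopes as
\begin{equation}
\E{\gDelta\circ\Fbeta\circ S(x)} - \E{\gDelta\circ\Fbeta\circ Q(\Zt)} = \sum_{i=1}^{n}\brac{\E{\gDelta\circ\Fbeta\circ Q(Z^{i-1})} - \E{\gDelta\circ\Fbeta\circ Q(Z^{i})}},
\end{equation}
so it suffices to bound each summand by $\tfrac16\Z m_3$.

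Fix $i$. Both $Z^{i-1}$ and $Z^{i}$ equal $\Znull_i$ perturbed only in the coordinates of $I_i$, the perturbation being the true block $(x_j, x_j^2)_{j\in I_i}$ in the first case and the Gaussian block $(\xt_j, \tilde X_j)_{j\in I_i}$ in the second, and $\Znull_i$ is independent of either block. Conditioning on $\Znull_i$, I would Taylor expand $\gDelta\circ\phi$ — with $\phi$ the map of \ref{boundPhi} — in the $I_i$-coordinates about their value at $\Znull_i$, to third order. Taking expectations, the zeroth-order term is common to the two sides and cancels. The first- and second-order terms cancel as well: by \eqref{Ztdef} the vector $\Zt$ carries the exact mean vector and covariance matrix of $Z$, and since the coordinates of $x$ are independent with $X_i = x_i^2$, every first and second (mixed) moment of $(x_j, x_j^2)_{j\in I_i}$ that appears — in particular $\E{x_j}$, $\E{x_j^2}$, $\E{x_j\cdot x_j^2}$ and $\E{x_j^4}$, plus the vanishing cross-block moments by independence — matches the corresponding moment of $(\xt_j, \tilde X_j)_{j\in I_i}$; as the relevant derivatives of $\gDelta\circ\phi$ are deterministic given $\Znull_i$, these orders drop out. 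Only the two third-order Taylor remainders remain.

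Each remainder is a third directional derivative of $\gDelta\circ\phi$ along the $I_i$-directions at an intermediate point, multiplied by triple products of the $I_i$-coordinates of the perturbation. For the derivative factor I would invoke \ref{onetermdeeper}, which assembles the bounds $\varphi', \varphi'', \varphi'''$ of \ref{boundPhi} with the three bounded derivatives of $\gDelta$ into the constant $\Z$; the coordinate factor is controlled by the maximal third centered absolute moments of $x$ and of $\Zt$, whose sum is $m_3$. Summing the per-step bound $\tfrac16\Z m_3$ over $i = 1, \dots, n$ gives the claim. It is worth stressing that this works because of the decomposition \eqref{diagoffdiagdecomposition}: representing $S(x)_j$ through \eqref{Qdef} as an \emph{off-diagonal} quadratic form in $x$ plus a \emph{linear} form in the auxiliary variable $X$ is exactly what keeps the per-block Taylor expansion of low order and lets moment matching through second order annihilate everything but the cubic term.

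The main obstacle is the bookkeeping in the second step: one must verify carefully that swapping $(x_j, x_j^2)$ for $(\xt_j, \tilde X_j)$ preserves every first and second moment entering the expansion — most delicately the cross moment, where $\E{x_j\cdot x_j^2} = \E{x_j^3}$ must be matched by $\E{\xt_j\,\tilde X_j} = \mathrm{Cov}(x_j, x_j^2) + \E{x_j}\E{x_j^2}$ — and that the vanishing diagonal of $\Ej$ in \eqref{diagoffdiagdecomposition} suppresses the diagonal quadratic contributions that would otherwise obstruct the cancellation. Once this is in place, the estimate follows routinely from \ref{boundPhi} and \ref{onetermdeeper}.
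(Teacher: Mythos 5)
Your proposal is correct and follows essentially the same route as the paper: a blockwise Lindeberg telescoping over $Z^0,\dots,Z^n$, a third-order Taylor expansion in the $I_i$-coordinates about $\Znull_i$ whose zeroth-, first- and second-order terms cancel because $\Zt$ carries the exact mean and covariance of $Z$ (and the swapped block is independent of $\Znull_i$), and a bound on the cubic remainder via \ref{onetermdeeper} together with the third absolute moments, giving $\tfrac16\Z m_3$ per step and $\tfrac{n}{6}\Z m_3$ in total. Your extra care about the cross-moment $\E{x_j^3}$ matching $\E{\xt_j\tilde X_j}$ is a welcome sharpening of a point the paper states only as "equality of the first two moments."
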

\begin{proof}
	Clearly, for $f \coloneqq \gDelta \circ \Fbeta \circ Q$,
	\begin{equation}\label{key}
	f(Z) - f(\Zt) = \sum_{i=1}^n f\brac{Z^{i-1}}  - f\brac{Z^i}
	\end{equation}
	and hence
	\begin{equation}\label{telescope}
	\abs{\E{f(Z) - \E{f(\tilde Z)}}} \le \sum_{i=1}^n \abs{\E{f\brac{Z^{i-1}}} - \E{f\brac{Z^{i}} }}.
	\end{equation}
	The rest of the proof consists in bounding an arbitrary summand on the right hand side. In order to do so we use Taylor expansion of second degree for ${f\brac{Z^{i-1}}}$ and ${f\brac{Z^{i}}}$ around $\E{Z}$ with Lagrange remainder. Given equality of the first two moments of $Z$ and $\Zt$, we conclude, the first two terms cancel out. Hence, using \ref{onetermdeeper} we immediately obtain
	\begin{equation}\label{oneTermTelescope}
	\abs{\E{f\brac{Z^{i-1}}} - \E{f\brac{Z^{i}} }} \le \frac{1}{6}  \Z m_3.
	\end{equation}
	Combination of \eqref{telescope}  and \eqref{oneTermTelescope} establishes the claim.
\end{proof}

\begin{lemma}\label{garlemma} Let assumptions of \ref{boundPhi} hold.  Then
\begin{equation}\label{key}
\begin{split}
\sup_c&\abs{\Prob{S(x) \le c} - \Prob{Q(\Zt) \le c}} \le R_A \\ &\coloneqq  84\log^{3/2}\abs{J} \brac{1 + \frac{\sqrt{n}}{\s}}\frac{\L^{3/4}}{n^{1/8}},
\end{split}
\end{equation}
where $\s$ comes from \ref{anticoncentration} and $Q(\cdot)$ is defined by \eqref{Qdef}.
\end{lemma}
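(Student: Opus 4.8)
The plan is to run the standard smoothing-plus-Lindeberg scheme for Gaussian approximation of a maximum. First I would reduce the event $\bracCurl{S(x)\le c}$ to a smooth functional, then transfer from $x$ to the Gaussian $\Zt$ through the telescoping of Lemma \ref{lemmatelescope}, then pay for the smoothing via an anti-concentration bound for the soft-max of $Q(\Zt)$ (Lemma \ref{anticoncentration}), and finally optimize the two smoothing parameters $\beta$ (soft-max temperature) and $\Delta$ (smoothing width).

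For a fixed threshold $c$ write $\Prob{S(x)\le c}=\E{\mathbf{1}\bracSq{\max_{j\in J}\brac{S(x)_j-c_j}\le 0}}$. Replacing the maximum by $\Fbeta$, which costs at most $\beta^{-1}\logJ$, and the hard indicator by $\gDelta$ sandwiches $\Prob{S(x)\le c}$ between two expectations of the form $\E{\gDelta\brac{-\Fbeta\brac{S(x)-c}+r_\pm}}$ with $\abs{r_\pm}=O\brac{\Delta+\beta^{-1}\logJ}$, and the identical construction applies to $Q(\Zt)$. Because $S(x)=Q(Z)$ with $Z$ the stack of $x$ and $X=(x_i^2)_i$ (see \eqref{Qdef}), and because translating $c$ does not affect the derivative bounds of Lemma \ref{boundPhi}, each functional appearing here is exactly of the type handled by Lemma \ref{lemmatelescope}.

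Applying Lemma \ref{lemmatelescope} to each of these functionals gives $\abs{\E{\gDelta\circ\Fbeta\circ S(x)}-\E{\gDelta\circ\Fbeta\circ Q(\Zt)}}\le\frac{n}{6}\Z m_3$, where $\Z$ (from Lemma \ref{onetermdeeper}) is a polynomial in the derivative bounds $\varphi',\varphi'',\varphi'''$ of Lemma \ref{boundPhi} and in $\Delta^{-1},\Delta^{-2},\Delta^{-3}$ produced by the three derivatives of $\gDelta$, and $m_3$ is a third-moment factor which is bounded under the paper's sub-Gaussianity and boundedness hypotheses; once $\beta\gtrsim\Delta^{-1}$ the term of order $\beta^{2}\L^{3}n^{-3/2}\Delta^{-1}$ dominates $\Z$. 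What remains after this swap is the smoothing residual: the gap between the two sandwiching expectations reduces to the direct soft-max error $\beta^{-1}\logJ$ plus the probability that the soft-max of $Q(\Zt)-c$ lies within $\eta=O\brac{\Delta+\beta^{-1}\logJ}$ of its critical value. This is precisely an anti-concentration statement for the soft-max of the quadratic-form vector $Q(\Zt)$, and Lemma \ref{anticoncentration} bounds it by a term of order $\eta\sqrt{\logJ}\brac{1+\sqrt{n}/\s}$.

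Collecting the three pieces via the triangle inequality gives, uniformly in $c$,
\begin{equation}
\sup_c\abs{\Prob{S(x)\le c}-\Prob{Q(\Zt)\le c}}\lesssim n\Z + \brac{\Delta+\frac{\logJ}{\beta}}\sqrt{\logJ}\brac{1+\frac{\sqrt{n}}{\s}}.
\end{equation}
With $\Z\asymp\beta^{2}\L^{3}n^{-3/2}\Delta^{-1}$, taking $\beta\asymp\Delta^{-1}$ and then $\Delta\asymp\L^{3/4}n^{-1/8}$ equalizes the Lindeberg term $n\Z\asymp\L^{3}n^{-1/2}\Delta^{-3}$ with the anti-concentration term and produces the rate $n^{-1/8}\L^{3/4}\log^{3/2}\abs{J}\brac{1+\sqrt{n}/\s}$; carrying the numerical constants through Lemmas \ref{boundPhi}, \ref{lemmatelescope}, \ref{onetermdeeper} and \ref{anticoncentration} gives the stated prefactor $84$. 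I expect this last balancing to be the main obstacle: the exponents of $\L$, $n$ and $\logJ$ in the three error sources must be matched simultaneously (which is what forces $\beta\asymp\Delta^{-1}$ and pins down $\Delta$), the hypothesis \eqref{Lbound}--\eqref{Ldef} of Lemma \ref{boundPhi} must remain satisfied for these choices, and the constants must be tracked carefully enough to arrive at $84$ rather than something larger.
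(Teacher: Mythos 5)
Your proposal follows essentially the same route as the paper's proof: sandwich the indicator of $\{S(x)\le c\}$ with $\gDelta\circ\Fbeta$, transfer from $S(x)=Q(Z)$ to $Q(\Zt)$ by the Lindeberg telescoping of \ref{lemmatelescope}, pay for the smoothing with the anti-concentration bound \ref{anticoncentration}, and balance the smoothing parameters to arrive at $\Delta\asymp\L^{3/4}n^{-1/8}$ and the stated rate. The only cosmetic difference is that the paper couples the two smoothing parameters exactly as $\Delta=\logJ/\beta$ (as dictated by \ref{boundsForSmoothing}) instead of treating the soft-max error as a separate term, which does not change the argument or the resulting bound.
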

\begin{proof}
	Choose  $\Delta = \logJ/\beta$. Then for an arbitrary constant vector $c \in \RJ$
	\begin{equation}\label{key}
	\begin{split}
	\Prob{S(x) \le c } &\le \E{\gDelta(\Fbeta(S(x)-c+\Delta))} \\
	& \le \E{\gDelta(\Fbeta(Q(\Zt)-c+\Delta))} + \frac{n}{6} \Z m_3 \\
	& \le \Prob{Q(\Zt)\le c-2\Delta} + \frac{n}{6} \Z m_3\\
	& \le \Prob{Q(\Zt)\le c} + 12 \Delta\frac{\sqrt{n}}{\s}\brac{\sqrt{4\logJ}+2} + \frac{n}{6} \Z m_3 \\
	 \le  \Prob{Q(\Zt)\le c}& +24 \Delta\frac{\sqrt{n}}{\s}\sqrt{4\logJ} + \frac{768\L^3\logJSq}{\Delta^3\sqrt{n}} .
	\end{split}
	\end{equation}
	Here he have consequently used \ref{boundsForSmoothing}, \ref{lemmatelescope}, \ref{boundsForSmoothing} again and \ref{anticoncentration} (which also defines $\s$). The last step uses that $\log 3>1$. Now we choose
 	\begin{equation}\label{key}
 	\Delta = 8 \brac{\frac{144\log^{3/2}\abs{J}\L}{\sqrt{n}}}^{1/4}
 	\end{equation} and obtain
	\begin{equation}\label{key}
	\begin{split}
	\Prob{S(x) \le c } &\le  \Prob{Q(\Zt)\le c} + \brac{84\log^{3/2}\abs{J}\frac{\sqrt{n}}{\s}  + 13 \log^{7/8}\abs{J}}\frac{\L^{3/4}}{n^{1/8}}\\
	& \le  \Prob{Q(\Zt)\le c} + 84\log^{3/2}\abs{J} \brac{1 + \frac{\sqrt{n}}{\s}}\frac{\L^{3/4}}{n^{1/8}}.
\end{split}
\end{equation}

	Similar reasoning yields a chain of "larger-or-equal" inequalities which, combined with the one above, finalizes the proof.
	\end{proof}

\begin{lemma}\label{assGarLemma}
	Let $x - \E{x}$ be sub-Gaussian and matrices $\Bj$ have bounded spectrum. Also assume for some positive $\gamma$
	\begin{equation}\label{mucloseboundForAll}
	\max_{j \in J}\infnorm{{\Bj\mu_n(j)}} = O(n^\gamma).
	\end{equation}
	Then for any positive $\u$ on a set of probability at least $1-\exp\brac{-\u^2}$ for $N$ and $n$ going to infinity
	\begin{equation}\label{key}
		\sup_c\abs{\Prob{S(x) \le c} - \Prob{Q(\Zt) \le c}}  = O\brac{\log^{3/2}\abs{J}\frac{\u^{3/4}+\log^{3/8}\abs{J}+n^{3\gamma/4}}{n^{1/8}}},
	\end{equation}
	where $Q(\cdot)$ is defined by \eqref{Qdef}.
\end{lemma}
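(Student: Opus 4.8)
The plan is to derive this statement from Lemma~\ref{garlemma}, whose conclusion gives the bound $R_A = 84\log^{3/2}\abs{J}\brac{1+\sqrt{n}/\s}\L^{3/4}/n^{1/8}$ but under the hypotheses of Lemma~\ref{boundPhi}, in particular the deterministic bound \eqref{Lbound} on $\infnorm{\Ej(\xnull^i_n(j)+a_n(j))}$. So the first task is to translate the probabilistic sub-Gaussian assumption on $x-\E{x}$ into such a deterministic bound on a set of high probability, and to control the resulting quantity $\L$ from \eqref{Ldef}. Writing $x = \mu + \xi$ with $\xi$ sub-Gaussian, we have $\Ej x_n(j) = \Ej\mu_n(j) + \Ej\xi_n(j)$; the first term is $O(n^\gamma)$ by the off-diagonal part of \eqref{mucloseboundForAll} (and bounded spectrum of $\Bj$ controls $\Ej$), while for the second term a standard sub-Gaussian maximal inequality over the $\le \abs{J}\cdot n \le N n$ coordinates of the vectors $\{\Ej\xi_n(j)\}_{j\in J}$ gives, on a set of probability at least $1-\exp(-\u^2)$, a bound of order $\sqrt{\log(\abs{J} n) + \u^2} \lesssim \sqrt{\log\abs{J}} + \u$ (using that $n \le N$ and $\abs{J}$ is polynomial in $N$, so $\log n \lesssim \log\abs{J}$). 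Combining, $L \lesssim n^\gamma + \sqrt{\log\abs{J}} + \u$, and since $\max_j\infnorm{D(j)}$ is likewise $O(n^\gamma)$ (diagonal part of \eqref{mucloseboundForAll} together with bounded spectrum), we get $\L \lesssim n^\gamma + \sqrt{\log\abs{J}} + \u$.

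\textbf{Continuing.} Next I would handle the ratio $\sqrt{n}/\s$. The quantity $\s$ comes from the anticoncentration Lemma~\ref{anticoncentration} applied to $Q(\Zt)$; as remarked in the proof of Theorem~\ref{validityTheorem}, under bounded spectrum of the $\Bj$ (Assumption~\ref{assboundedEigenvalues}-type control) one has $\sqrt{n}/\s = O(1)$, so the factor $\brac{1+\sqrt{n}/\s}$ in $R_A$ is $O(1)$ and can be absorbed into the constant. Then plugging the bound on $\L$ into $R_A$ and using $(a+b+c)^{3/4} \le a^{3/4}+b^{3/4}+c^{3/4}$ for nonnegative reals,
\begin{equation}
\L^{3/4} \lesssim n^{3\gamma/4} + \log^{3/8}\abs{J} + \u^{3/4},
\end{equation}
so that
\begin{equation}
R_A \lesssim \log^{3/2}\abs{J}\,\frac{n^{3\gamma/4} + \log^{3/8}\abs{J} + \u^{3/4}}{n^{1/8}},
\end{equation}
which is exactly the claimed bound (after reordering the three terms in the numerator). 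The whole derivation is valid on the intersection of the high-probability event from the maximal inequality (probability $\ge 1-\exp(-\u^2)$) with the deterministic facts, so the probability statement of the lemma holds.

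\textbf{Main obstacle.} The routine parts are the differentiation-free bookkeeping and the algebra above; the one genuinely delicate point is making the deterministic substitute for \eqref{Lbound} uniform in $j \in J$ \emph{and} in the Lindeberg step index $i=1..n$ simultaneously, because Lemma~\ref{garlemma} is applied along the whole telescoping sequence $x^0,\dots,x^n$ of hybrid vectors (each coordinate of which is either an original sub-Gaussian $\xi$-coordinate or its Gaussian replacement $\xt$-coordinate). Since $\xt \sim \N{\E{Z}}{\Var{Z}}$ has sub-Gaussian marginals with proxy variance comparable to $\g^2$, every hybrid vector $x^i$ is sub-Gaussian with a common constant, so a single maximal inequality over the at most $(n+1)\abs{J} n$ relevant linear forms $\{\Ej (x^i)_n(j)\}_{i,j}$ suffices; one must just be careful that the union bound over $i$ only costs another $\log n \lesssim \log\abs{J}$ inside the square root, which does not change the rate. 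Verifying that $\Ej\mu_n(j)$ and $D(j)$ are both $O(n^\gamma)$ from the single hypothesis \eqref{mucloseboundForAll} (which bounds $\infnorm{\Bj\mu_n(j)} = \infnorm{(\Ej + \mathrm{diag}(D(j)))\mu_n(j)}$) requires noting that $\mathrm{diag}(D(j))\mu_n(j)$ has entries $D(j)_k (\mu_n(j))_k$ and invoking boundedness of $\infnorm{\mu}$ under $\Hnull$ together with bounded spectrum to separate the two pieces; this is the kind of step that looks trivial but is where the constants actually live.
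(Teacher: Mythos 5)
Your proposal follows essentially the same route as the paper's own proof: bound $\L$ from \eqref{Ldef} with high probability using sub-Gaussianity of $x-\E{x}$ together with \eqref{mucloseboundForAll} (the paper does this via \ref{LboundLemma} plus a union bound over $j\in J$, then absorbs the $\abs{J}$ factor by the change of variable $\u^2=\t^2-\logJ$, which is exactly your $\L \lesssim n^\gamma+\sqrt{\log\abs{J}}+\u$ on a set of probability $1-\e^{-\u^2}$), observe $\sqrt{n}/\s=O(1)$, and plug into \ref{garlemma} using $(a+b+c)^{3/4}\le a^{3/4}+b^{3/4}+c^{3/4}$. The only cosmetic differences are that your uniformity-over-Lindeberg-steps discussion is more explicit than the paper's, and that $\max_j\infnorm{D(j)}$ is controlled by the bounded-spectrum assumption alone (diagonal entries are dominated by the spectral norm), not by \eqref{mucloseboundForAll}; neither affects correctness.
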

\begin{proof}
	Application of \ref{LboundLemma} to matrices $E(j)$ yields the bound on $\L$ defined by \eqref{Ldef} $$\L = O(\t + n^\gamma)$$
	 on a set of probability at least $1-\abs{J}e^{-\t^2}$.

	  Investigation of $\s$ defined in \ref{anticoncentration} yields $\sqrt{n}/\s = O(1)$. Really, $\Fnorm{\Bj}$ is a sum of squared eigenvalues (which are bounded) and $\norm{\E{x}}^2 \le n \infnorm{\E{x}}^2$. Now we apply \ref{garlemma}
	\begin{equation}\label{key}
	\begin{split}
	\sup_c&\abs{\Prob{S(x) \le c} - \Prob{Q(\Zt) \le c}} \le R_A    =O\brac{\log^{3/2}\abs{J}\frac{\t^{3/4} + n^{3\gamma /4}}{n^{1/8}}}.
 	\end{split}
	\end{equation}
	Now change the variable $\u^2 \coloneqq \t^2-\logJ$
	\begin{equation}\label{Radef}
	R_A =O\brac{\log^{3/2}\abs{J}\frac{\u^{3/4}+\log^{3/8}\abs{J}+n^{3\gamma/4}}{n^{1/8}}}.
	\end{equation}
\end{proof}

\begin{lemma}\label{onetermdeeper}
	In terms of \ref{boundPhi} for function
	\begin{equation}\label{key}
	\zeta \coloneqq \gDelta \circ \phi
	\end{equation}
	it holds that
	\begin{equation}\label{key}
	\begin{split}
	\sum_{j\in I_i} \brac{\partial_{a_j}+\partial_{b_j}}\zeta(a,b) \le 2\Delta^{-1}\varphi',
	\end{split}
	\end{equation}
	\begin{equation}\label{key}
	\begin{split}
	\abs{\sum_{j,j'\in I_i} \brac{\partial^2_{a_ja_{j'}}+\partial^2_{b_jb_{j'}}}\zeta(a,b)} \le  8\Delta^{-2}\varphi'^2 + 2\Delta^{-1}\varphi'',
	\end{split}
	\end{equation}
	and
	\begin{equation}\label{key}
	\begin{split}
	\abs{\sum_{j,j',j''\in I_i} \brac{\partial^3_{a_ja_{j'}a_{j''}}+\partial^3_{b_jb_{j'}b_{j''}}}\zeta(a,b)}& \le \Z \\&\coloneqq  32\Delta^{-3} \varphi'^3 + 24\Delta^{-2}\varphi''\varphi' + 2\Delta^{-1}\varphi'''.
	\end{split}
	\end{equation}
\end{lemma}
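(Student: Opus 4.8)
The plan is to exploit the composition structure $\zeta = \gDelta \circ \phi$ and to differentiate via the chain rule, keeping track of which derivatives of $\gDelta$ and $\phi$ appear at each order, and then to substitute the bounds on the derivatives of $\phi$ already supplied by Lemma~\ref{boundPhi}. Throughout, $\phi$ is scalar-valued, so all the sums over $j, j', j'' \in I_i$ act on a one-dimensional target and we are really bounding sums of directional derivatives of a composition of functions $\R^{|I_i|} \to \R \to \R$. The key structural fact is that $\gDelta(x) = g(x/\Delta)$, so the $k$-th derivative of $\gDelta$ scales as $\Delta^{-k}$ times a bounded quantity; specifically $\infnorm{\gDelta^{(k)}} \le \Delta^{-k}\infnorm{g^{(k)}}$, and since $g$ has three bounded derivatives these are all $O(\Delta^{-k})$ (the example in \cite{doi:10.1137/S0040585X97T988411} gives explicit constants, which here I absorb so that $\infnorm{g^{(1)}}, \infnorm{g^{(2)}}, \infnorm{g^{(3)}}$ can all be taken $\le 2$, matching the factors of $2$ in the statement).

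First I would handle the first-order term: by the chain rule $\sum_{j\in I_i}(\partial_{a_j}+\partial_{b_j})\zeta = \gDelta'(\phi)\cdot \sum_{j\in I_i}(\partial_{a_j}+\partial_{b_j})\phi$, and taking absolute values and inserting $\infnorm{\gDelta'}\le 2\Delta^{-1}$ together with the bound $|\sum_{j\in I_i}(\partial_{a_j}+\partial_{b_j})\phi|\le \varphi'$ from Lemma~\ref{boundPhi} gives $2\Delta^{-1}\varphi'$. Second, for the second-order term, differentiating again produces two groups of terms: $\gDelta''(\phi)\cdot\big(\sum_j(\partial_{a_j}+\partial_{b_j})\phi\big)^2$ and $\gDelta'(\phi)\cdot\sum_{j,j'}(\partial^2_{a_ja_{j'}}+\partial^2_{b_jb_{j'}})\phi$. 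Here one must be a little careful that squaring the first-order operator is legitimate: because $a$ and $b$ are "parallel" variables (the index sets coincide on $I_i$ and the cross terms $\partial_{a_j}\partial_{b_{j'}}$ are not present in the stated second-order operator), the expansion of $(\partial_{a}+\partial_{b})^2$ restricted to the pure $aa$ and $bb$ parts is exactly what appears, and the mixed $ab$ second derivatives that the chain rule would naively generate vanish because $\phi$'s dependence on $a$ and on $b$ enters $Q$ additively through disjoint coordinate blocks — so $\partial_{a_j}\partial_{b_{j'}}\phi = 0$. Bounding $|\gDelta''|\le 2\Delta^{-2}$, $\big(\sum_j(\partial_{a_j}+\partial_{b_j})\phi\big)^2 \le 2\varphi'^2$ (the factor $2$ again from splitting $a$ and $b$ contributions — actually $(|A|+|B|)^2 \le 2A^2+2B^2$ with each $\le \varphi'^2$), and $|\gDelta'|\le 2\Delta^{-1}$, $|\sum_{j,j'}\partial^2\phi|\le \varphi''$ gives $8\Delta^{-2}\varphi'^2 + 2\Delta^{-1}\varphi''$ as claimed.

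Finally, for the third-order term I would apply the Faà di Bruno / iterated product rule once more, yielding three groups: a $\gDelta'''(\phi)$ term multiplied by the cube of the first-order derivative sum, a $\gDelta''(\phi)$ term mixing the first- and second-order sums (with a combinatorial factor $3$ from the three ways to place the "second-derivative" slot), and a $\gDelta'(\phi)$ term with the third-order derivative sum. Inserting $|\gDelta'''|\le 2\Delta^{-3}$, $|\gDelta''|\le 2\Delta^{-2}$, $|\gDelta'|\le 2\Delta^{-1}$ and the Lemma~\ref{boundPhi} bounds $\varphi', \varphi'', \varphi'''$, together with the same splitting constants as before (the cube contributes up to $4\varphi'^3$ after the $a$/$b$ split, and the factor $3$ from the product rule together with a factor $2$ from the split gives the $24$ in front of $\Delta^{-2}\varphi''\varphi'$), produces $32\Delta^{-3}\varphi'^3 + 24\Delta^{-2}\varphi''\varphi' + 2\Delta^{-1}\varphi'''$, which is the definition of $\Z$. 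The main obstacle, and the only place real care is needed, is the bookkeeping of combinatorial constants and the verification that the mixed $a$–$b$ cross derivatives of $\phi$ genuinely vanish (so that the "$aa$ plus $bb$" form in the statement is exactly what the chain rule delivers, with no leftover $ab$ terms); once that is settled the rest is a mechanical substitution of the bounds from Lemma~\ref{boundPhi} and the scaling $\gDelta^{(k)} = \Delta^{-k} g^{(k)}(\cdot/\Delta)$.
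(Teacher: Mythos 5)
Your overall route --- chain rule for $\zeta=\gDelta\circ\phi$, the scaling $\abs{\gDelta^{(k)}}\le C\,\Delta^{-k}$ for the smooth indicator, and substitution of the bounds of \ref{boundPhi} --- is exactly the paper's proof, which simply outsources the differentiation bookkeeping to the cited reference. But one auxiliary claim you make is false: the mixed derivatives $\partial_{a_j}\partial_{b_{j'}}\phi$ do \emph{not} vanish. It is true that every coordinate of $Q$ is additively separable in its two arguments, so $\partial_{a_j}\partial_{b_{j'}}Q_k=0$; however $\phi=\Fbeta\circ Q$ and the soft-max is nonlinear, so
\begin{equation}
\partial_{a_j}\partial_{b_{j'}}\phi \;=\; \sum_{k,l}\partial_k\partial_l\Fbeta\brac{Q}\,\partial_{a_j}Q_k\,\partial_{b_{j'}}Q_l \;\neq\; 0
\end{equation}
in general. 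Fortunately the claim is also unnecessary: the quantities in the statement contain only pure $a$- and pure $b$-derivatives, and applying the chain rule directly to $\sum_{j,j'}\brac{\partial^2_{a_ja_{j'}}+\partial^2_{b_jb_{j'}}}\zeta$ gives $\gDelta''(\phi)\brac{A^2+B^2}+\gDelta'(\phi)\sum_{j,j'}\brac{\partial^2_{a_ja_{j'}}+\partial^2_{b_jb_{j'}}}\phi$ with $A\coloneqq\sum_{j\in I_i}\partial_{a_j}\phi$ and $B\coloneqq\sum_{j\in I_i}\partial_{b_j}\phi$ (similarly at third order); no mixed terms ever appear, so drop the vanishing argument rather than lean on it.

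The second, smaller gap is that your substitution needs bounds on $A$ and $B$ (and on the pure $aa$- and $bb$-blocks of the second-order sums) \emph{separately}, whereas \ref{boundPhi} as stated only bounds the combined sums, e.g.\ $\abs{A+B}\le\varphi'$, which does not control $A^2+B^2$ because of possible cancellation. The fix is easy but should be made explicit: repeating the computation behind \ref{boundPhi} blockwise, each window $k$ contains at most one index of $I_i$, the soft-max gradient is a sub-probability vector, and \eqref{Lbound} together with \eqref{Ldef} give $\abs{A}\le 2L/\sqrt{n}\le\L/\sqrt{n}=\varphi'/2$ and $\abs{B}\le\max_{j}\infnorm{\Dj}/\sqrt{n}\le\varphi'/2$, with analogous blockwise bounds at second and third order; inserting these (and the bounds on $g'$, $g''$, $g'''$ from the cited construction) yields the stated constants $8\Delta^{-2}\varphi'^2$, $32\Delta^{-3}\varphi'^3$, $24\Delta^{-2}\varphi''\varphi'$ with room to spare. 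With these two repairs your argument coincides with the paper's.
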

\begin{proof}
	The proof consists in direct differentiation and bounding using \ref{boundPhi} and equation (53) from \cite{doi:10.1137/S0040585X97T988411}. Intermediate differentiation steps can be found in the proof of Lemma A.14 \cite{doi:10.1137/S0040585X97T988411}.
\end{proof}
The following lemma justifies the smoothing relying on smooth indicator $\gDelta$ and soft-max $\Fbeta$. Its proof can be found in \cite{chernozhukov2013}.
\begin{lemma}\label{boundsForSmoothing}
	Let $\Delta = \logJ/\beta$, then for arbitrary vector $x$ :
	\begin{equation}\label{key}
	\gDelta(\Fbeta(x-\Delta)) \le 1\left\{\infnorm{x} \ge 0 \right\} \le \gDelta(\Fbeta(x+\Delta)).
	\end{equation}
\end{lemma}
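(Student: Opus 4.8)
The plan is to treat this as what it is --- a purely deterministic, pointwise inequality (no probability enters) --- and to recover it from two elementary properties of the soft-max together with the shape of $\gDelta$. It is in fact precisely the smoothing lemma that drives the Gaussian comparison arguments of \cite{chernozhukov2013}, so one legitimate route is to cite it; below I sketch the self-contained reconstruction I would write.

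First I would record the \emph{log-sum-exp envelope}: for every $v\in\RJ$,
\begin{equation}
\beta\max_{j\in J}v_j\;\le\;\Fbeta(v)\;\le\;\beta\max_{j\in J}v_j+\logJ,
\end{equation}
which is immediate from $e^{\beta\max_j v_j}\le\sum_{j\in J}e^{\beta v_j}\le\abs{J}\,e^{\beta\max_j v_j}$ on taking logarithms. Next, the \emph{calibrated shift}: adding a constant to every coordinate shifts the soft-max linearly, $\Fbeta(v+s\mathbf 1)=\Fbeta(v)+\beta s$, so with the prescribed $\Delta=\logJ/\beta$ (hence $\beta\Delta=\logJ$) one gets $\Fbeta(x\pm\Delta)=\Fbeta(x)\pm\logJ$. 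The whole point of choosing $\Delta$ this way is that the $\logJ$ slack in the envelope is cancelled, leaving the clean bracketing
\begin{equation}
\Fbeta(x-\Delta)\;\le\;\beta\max_{j\in J}x_j\;\le\;\Fbeta(x+\Delta).
\end{equation}

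It then remains to push this through $\gDelta$, using that $\gDelta$ is non-decreasing, $[0,1]$-valued, and equal to the hard indicator $1\{\cdot>0\}$ outside $(-\Delta,\Delta)$. Applying $\gDelta$ to the last display and splitting on the sign of $\max_{j}x_j$ yields both inequalities: on the exceedance side the shift carries $\Fbeta(x+\Delta)$ past the plateau level $\Delta$ --- this uses $\beta\Delta=\logJ$ and $\beta$ bounded below, which holds for the $\beta=\beta(\abs{J})$ fixed in the proof of \ref{garlemma} --- forcing $\gDelta(\Fbeta(x+\Delta))=1$; on the complementary side one only needs $\gDelta\ge0$, and the lower inequality is symmetric (using $\gDelta\le1$, respectively $\gDelta=0$ once $\Fbeta(x-\Delta)\le-\Delta$). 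The only step requiring any care --- and the sole place the precise value of $\Delta$ is used --- is checking that the shift is large enough to clear the plateau and, a touch more delicately, that it produces exactly the $2\Delta$-sized threshold slack that is later consumed in the proof of \ref{garlemma} (the passage to $\Prob{Q(\Zt)\le c-2\Delta}$). That bookkeeping is the genuine content of the lemma; everything else is the one-line envelope estimate.
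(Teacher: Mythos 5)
The paper does not actually prove this lemma---it simply defers to \cite{chernozhukov2013}---so your self-contained reconstruction is welcome, and its skeleton is the right one: the envelope $\beta\max_{j\in J}v_j\le\Fbeta(v)\le\beta\max_{j\in J}v_j+\logJ$, the exact shift identity $\Fbeta(x\pm\Delta)=\Fbeta(x)\pm\beta\Delta$, and the calibration $\beta\Delta=\logJ$ giving $\Fbeta(x-\Delta)\le\beta\max_{j\in J}x_j\le\Fbeta(x+\Delta)$ are all correct. You are also right to read the degenerate indicator $1\{\infnorm{x}\ge0\}$ (identically one as written) as the intended $1\{\max_{j\in J}x_j\ge0\}$, and your treatment of the upper inequality is fine, modulo the honest caveat you flag: one needs $\log\abs{J}\ge\Delta$, i.e.\ $\beta\ge1$, which is not a hypothesis of the lemma but does hold for the $\beta$ chosen in \ref{garlemma}.

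The genuine gap is the lower inequality, which you wave through as ``symmetric''. It is not: with the paper's unnormalized soft-max the bracketing only yields $\Fbeta(x-\Delta)\le\beta\max_{j\in J}x_j$, so on the event $\max_{j\in J}x_j<0$ you get $\Fbeta(x-\Delta)<0$ but \emph{not} $\Fbeta(x-\Delta)\le-\Delta$, and the paper's $\gDelta$ is pinned to the hard indicator only outside $(-\Delta,\Delta)$. Concretely, if every coordinate of $x$ equals $-\Delta/(2\beta)$, then $\Fbeta(x-\Delta)=\logJ-\Delta/2-\beta\Delta=-\Delta/2$, where $\gDelta$ may be strictly positive while the indicator (under the max reading) is $0$; so the step ``$\gDelta=0$ once $\Fbeta(x-\Delta)\le-\Delta$'' is precisely the claim you never establish, and it fails in general. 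To close it you must either work with a one-sided smooth indicator ($\gDelta\equiv0$ on $(-\infty,0]$ and $\equiv1$ on $[\Delta,+\infty)$), which is how the cited construction in \cite{chernozhukov2013} is set up, or weaken the conclusion to a shifted-threshold form such as $\gDelta(\Fbeta(x-\Delta))\le1\{\max_{j\in J}x_j\ge-2\Delta-\Delta/\beta\}$, which is what the chain of inequalities in \ref{garlemma} actually consumes, the $2\Delta$ slack being paid for afterwards via \ref{anticoncentration}. As it stands, your write-up asserts the needed threshold crossing on the lower side without deriving it, and with the paper's stated (two-sided) $\gDelta$ it cannot be derived.
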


The next lemma establishes prerequisites for inequality \eqref{Lbound}.
\begin{lemma}\label{LboundLemma} Consider a symmetric matrix $A$ with the largest eigenvalue $\Lambda$.
	Let $\eps$ be a vector of independent sub-Gaussian with $\g^2$ elements.
	Then on a set of probability at least $1-\exp(-\t^2)$
	\begin{equation}\label{Lbound}
	\infnorm{A\eps} \le \Lambda \t.
	\end{equation}
\end{lemma}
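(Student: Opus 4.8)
The plan is to prove a concentration inequality for $\infnorm{A\eps}$ where $A$ is symmetric with largest eigenvalue $\Lambda$. The key observation is that $\infnorm{A\eps} = \max_k \abs{(A\eps)_k} = \max_k \abs{\dotprod{e_k}{A\eps}} = \max_k \abs{\dotprod{A e_k}{\eps}}$, where $e_k$ are the standard basis vectors. So we have reduced the problem to controlling a maximum over finitely many linear functionals of the sub-Gaussian vector $\eps$.

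First I would fix an index $k$ and analyze $\dotprod{Ae_k}{\eps}$. Since $\eps$ has independent sub-Gaussian-with-$\g^2$ components, any linear combination $\dotprod{v}{\eps}$ is sub-Gaussian with parameter $\g^2\norm{v}^2$ (summing the MGF bounds over independent coordinates). Here $v = Ae_k$, so $\norm{Ae_k} \le \norm{A}\norm{e_k} = \Lambda$ because the spectral norm of a symmetric matrix equals its largest-magnitude eigenvalue — note the statement implicitly assumes $\Lambda \ge 0$ is the spectral norm, or one should read $\Lambda$ as $\eigmax{A}$ with $A$ positive semidefinite, which is the use case (matrices $E(j)$, though actually those aren't PSD; more charitably $\Lambda$ here denotes the spectral norm). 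Thus $\dotprod{Ae_k}{\eps}$ is sub-Gaussian with $\Lambda^2\g^2$, giving the standard tail bound $\Prob{\abs{\dotprod{Ae_k}{\eps}} > r} \le 2\exp\brac{-r^2/(2\Lambda^2\g^2)}$.

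Next I would take a union bound over $k = 1..N$ (the ambient dimension; in the application $N = n$): $\Prob{\infnorm{A\eps} > r} \le 2N\exp\brac{-r^2/(2\Lambda^2\g^2)}$. Setting this equal to $\exp(-\t^2)$ and solving for $r$ yields $r = \Lambda\g\sqrt{2\log(2N) + 2\t^2}$, which is of order $\Lambda\t$ once $\t$ dominates the dimensional logarithm — and in fact in the surrounding proof (\ref{assGarLemma}) the bound is invoked as $\L = O(\t + n^\gamma)$ on a set of probability $1 - \abs{J}e^{-\t^2}$, so the $\log N$ term is absorbed there by rescaling $\t$ and the $\g^2$ is a constant. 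The clean statement $\infnorm{A\eps} \le \Lambda\t$ with probability $1 - e^{-\t^2}$ therefore holds up to absolute constants depending on $\g$, which is how it is used.

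The main obstacle — really the only subtlety — is bookkeeping the constants and the dimensional logarithm: the literal inequality $\infnorm{A\eps}\le\Lambda\t$ cannot hold with probability $1-e^{-\t^2}$ for all $\t$ without either a constant factor or absorbing a $\sqrt{\log N}$ term, so I would either state it with an implicit absolute constant (consistent with the $O(\cdot)$ usage in \ref{assGarLemma}) or carry the explicit $\Lambda\g\sqrt{2\log(2N)+2\t^2}$ bound. Everything else is the routine chain: spectral-norm bound on $\norm{Ae_k}$, sub-Gaussianity of linear combinations of independent sub-Gaussians, the one-dimensional sub-Gaussian tail bound, and a union bound over $N$ coordinates.
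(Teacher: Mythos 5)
Your argument is correct and is, in fact, tighter than what the paper records, but it is not quite the same route. The paper's proof fixes a single unit vector $a$, notes that $a^T\eps$ is sub-Gaussian with $\g^2$, writes the one-dimensional tail bound, and then concludes through the chain $\infnorm{A\eps} \le \norm{A\eps} \le \Lambda\t$; no union bound over coordinates appears, and the passage from a tail bound for one fixed linear functional to a bound on the full Euclidean norm is left implicit (taken literally it would require either a supremum/covering argument over the sphere or the crude bound $\norm{A\eps}\le\Lambda\norm{\eps}$, whose typical size is $\Lambda\g\sqrt{n}$, so the paper's statement can only be read up to constants, consistent with the way it is consumed as $\L = O(\t+n^\gamma)$ in \ref{assGarLemma}). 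You instead bound $\infnorm{A\eps}=\max_k\abs{\dotprod{Ae_k}{\eps}}$ coordinatewise, use $\norm{Ae_k}\le\Lambda$ and sub-Gaussianity of linear combinations, and pay an explicit union-bound price, arriving at $\Lambda\g\sqrt{2\log(2N)+2\t^2}$ with probability $1-\exp(-\t^2)$. This makes rigorous exactly the step the paper glosses over, at the cost of a $\sqrt{\log N}$ term that, as you correctly observe, is harmless in the application since it is dominated by $n^{\gamma}$ and absorbed by the rescaling $\u^2=\t^2-\logJ$ in \ref{assGarLemma}. Your caveat that the literal inequality $\infnorm{A\eps}\le\Lambda\t$ with probability $1-e^{-\t^2}$ cannot hold verbatim without a constant depending on $\g$ (or the dimensional logarithm) is well taken; the paper's own displayed tail bound carries the $\g^2$ and the factor $2$ that its clean conclusion silently drops. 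In short: no gap on your side; your proof is a legitimate, more explicit substitute for the paper's.
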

\begin{proof}
	For a given unit vector $a$, as far as the components of $\eps$ are independent and sub-Gaussian, $a^T\eps$ is sub-Gaussian with $\g^2$ as well. Hence,
	\begin{equation}\label{key}
	\Prob{\abs{a^T\eps} > t} \le p \coloneqq 2\exp(-\t^2/\g^2)
	\end{equation}
	and therefore,
	\begin{equation}\label{key}
	\infnorm{A\eps} \le \norm{A\eps} \le \Lambda \t.
	\end{equation}
\end{proof}

\section{Gaussian comparison}
Notation of this section follows the notation of Section \ref{sectionGAR}. Proof of the following result was inspired by the proof of Theorem 1 in \cite{chernComp}.

\begin{lemma}\label{lemmacomparison}
	Consider two $2N$-dimensional normal vectors $Z\sim \N{\mu}{\Sigma}$ and $\tilde Z\sim \N{\tilde\mu}{\tilde\Sigma}$. Denote $\Delta_\mu \coloneqq \infnorm{\mu - \tilde{\mu}}$ and $\Delta_{\Sigma} \coloneqq \infnorm{\Sigma - \tilde \Sigma}$. Use notation of \ref{boundPhi}. Then for any constant vector $c$  and positive $\Delta$  holds
	\begin{equation}\label{key}
	\begin{split}
	\abs{\Prob{Q(Z) < c} - \Prob{Q(\Zt) < c}} \le \Delta_\mu\brac{\frac{\L\sqrt{n}}{\Delta  }} + 16\Delta_\Sigma  \brac{\frac{\L+\logJ\L^2}{\Delta^2}  } \\+ \frac{8\sqrt{n}}{\s}\Delta\sqrt{4\logJ},
	\end{split}
	\end{equation}
	where $\s$ comes from \ref{anticoncentration}.
\end{lemma}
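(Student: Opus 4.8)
The plan is to reuse the soft-max smoothing from the proof of \ref{garlemma} to replace the two rectangle probabilities by smooth expectations, and then to transport the law $\N{\mu}{\Sigma}$ into $\N{\tilde\mu}{\tilde\Sigma}$ by a \emph{block-wise} Slepian interpolation, mimicking the Lindeberg telescope of \ref{lemmatelescope} but with the non-Gaussian--to--Gaussian replacement exchanged for a Gaussian--to--Gaussian one (the route of \cite{chernComp}). Write $\beta\coloneqq\logJ/\Delta$, $h\coloneqq\gDelta\circ\Fbeta$ and $\psi\coloneqq h\circ Q$. Exactly as in the opening displays of the proof of \ref{garlemma}, \ref{boundsForSmoothing} sandwiches $\Prob{Q(Z)<c}$ and $\Prob{Q(\Zt)<c}$ between smoothed expectations of the form $\E{\psi\brac{Q(Z)-c\pm\Delta}}$, and \ref{anticoncentration} applied to the Gaussian quadratic form $Q(\Zt)$ absorbs the mismatch between the $\pm\Delta$ shifts into the anticoncentration remainder $\tfrac{8\sqrt n}{\s}\Delta\sqrt{4\logJ}$ --- the third term of the claim. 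It then suffices to bound $\abs{\E{\psi(Z)}-\E{\psi(\Zt)}}$ by the first two terms.

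For that comparison I would telescope over the Lindeberg index sets $I_1,\dots,I_n$ of Section~\ref{sectionGAR}, using that every window $\{j-m+1,\dots,j+m\}$ meets each $I_i$ in exactly one index. Build a chain $Z^{(0)}=Z,\dots,Z^{(n)}=\Zt$ in which $Z^{(i)}$ draws the coordinates of $I_1\cup\dots\cup I_i$, together with their $X$-counterparts, from $\N{\tilde\mu}{\tilde\Sigma}$ and the remaining ones from $\N{\mu}{\Sigma}$, the two chunks being taken independent, so that $Z^{(i-1)}$ and $Z^{(i)}$ differ only on the block attached to $I_i$. For one step, interpolate the mean along $\mu^r=(1-r)\mu^{(i-1)}+r\mu^{(i)}$ and the covariance along $\Sigma^r=(1-r)\Sigma^{(i-1)}+r\Sigma^{(i)}$, realised as $\mu^r+\sqrt{1-r}\,U+\sqrt r\,\tilde U$ with $U,\tilde U$ independent centered Gaussians; differentiating in $r$ produces a drift contribution $\dotprod{\mu^{(i)}-\mu^{(i-1)}}{\E{\nabla\psi}}$ and, after Gaussian integration by parts on $U$ and $\tilde U$, a diffusion contribution $\tfrac12\E{\dotprod{\Sigma^{(i)}-\Sigma^{(i-1)}}{\nabla^2\psi}}$ in the Hilbert--Schmidt sense. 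The mean increment lives on the $I_i$-block with entries of modulus $\le\Delta_\mu$, and the covariance increment on $I_i\times I_i$ (its $X$-counterparts included) with entries of modulus $\le\Delta_\Sigma$, so only derivatives of $\psi$ in the $I_i$-block coordinates enter.

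In those coordinates $\psi$ is exactly the function $\zeta=\gDelta\circ\phi$, $\phi=\Fbeta\circ Q$, of \ref{onetermdeeper}; provided the sup-norm condition~\eqref{Lbound} of \ref{boundPhi} holds along the interpolation path --- which I would secure on an event of the required probability via \ref{LboundLemma}, exactly as in the proof of \ref{assGarLemma} --- the estimates of \ref{boundPhi} and \ref{onetermdeeper} (which, by the one-point intersection of $I_i$ with each window exploited in their proofs, also bound the matching sums of \emph{absolute values} of derivatives) give the per-step bounds $\Delta_\mu\cdot 2\Delta^{-1}\varphi'$ for the drift and $\tfrac12\Delta_\Sigma\brac{8\Delta^{-2}\varphi'^2+2\Delta^{-1}\varphi''}$ for the diffusion, with $\varphi'=2\L/\sqrt n$ and $\varphi''=4\beta\L^2/n$. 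Summing over $r\in[0,1]$ and over the $n$ steps,
\begin{equation}
\abs{\E{\psi(Z)}-\E{\psi(\Zt)}}\le n\,\Delta_\mu\cdot 2\Delta^{-1}\varphi' + \tfrac12\,n\,\Delta_\Sigma\brac{8\Delta^{-2}\varphi'^2+2\Delta^{-1}\varphi''},
\end{equation}
and substituting $\varphi'$, $\varphi''$, $\beta=\logJ/\Delta$ and absorbing constants recovers $\Delta_\mu\brac{\tfrac{\L\sqrt n}{\Delta}}$ and $16\Delta_\Sigma\brac{\tfrac{\L+\logJ\L^2}{\Delta^2}}$; together with the first paragraph this gives the assertion.

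The main obstacle --- the ``non-trivial'' use of Slepian --- is the middle step: making the block-wise replacement legitimate and keeping it compatible with the per-block derivative estimates. One has to regroup the $2N$ coordinates into the blocks $I_i$ so that consecutive $Z^{(i)}$ differ in an \emph{independent} chunk (or, for fully general $\Sigma,\tilde\Sigma$, run each single-step Slepian conditionally on the complementary coordinates), and one has to propagate the sup-norm condition~\eqref{Lbound} along the whole path rather than at one point. The crucial bookkeeping is that the Hilbert--Schmidt pairing $\dotprod{\Sigma^{(i)}-\Sigma^{(i-1)}}{\nabla^2\psi}$ collapses to a double sum over $I_i\times I_i$ only, so the second-order term scales like $n\varphi'^2\sim\L^2$ rather than like $\abs J$; a single global Slepian step would instead lose a factor of $n$ there, which is precisely why the interpolation has to be performed block by block.
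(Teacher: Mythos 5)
Your proposal is correct and follows essentially the same route as the paper: the same $\gDelta\circ\Fbeta$ smoothing with $\beta=\logJ/\Delta$ and \ref{boundsForSmoothing}, the same anti-concentration step via \ref{anticoncentration} producing the $\frac{8\sqrt{n}}{\s}\Delta\sqrt{4\logJ}$ term, and a Slepian/Stein interpolation whose increments are confined to the blocks $I_i$ so that the per-block derivative bounds of \ref{boundPhi} and \ref{onetermdeeper} apply — the paper merely packages your $n$ sequential one-parameter steps into a single $n$-parameter interpolant $\Psi(\psi(\varphi))$ differentiated in all $\varphi_i$ simultaneously, and likewise relies on the block structure of the covariance when invoking \ref{stein}. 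The only caveat is constant bookkeeping: your telescope gives, e.g., $4\Delta_\mu\L\sqrt{n}/\Delta$ rather than $\Delta_\mu\L\sqrt{n}/\Delta$, so it recovers the stated bound only up to absolute constants, which is immaterial for how the lemma is used downstream.
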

\begin{proof}
	The proof consists in a multiple use of Slepian smart interpolant.
	Denote the first and the second halves of vector $Z$ as $x$ and $X$ and similarly introduce $\xtt $ and $\tilde X $ being halves of $\Zt$. Further, consider $n$ real values $\varphi_1, \varphi_2, .., \varphi_n$ and compose a vector of length $N$ iterating over these values:
	\begin{equation}\label{key}
	\psi(\varphi) \coloneqq \psi\brac{\{\varphi_i\}_{i=1}^n} \coloneqq \brac{\varphi_1, \varphi_2, .., \varphi_n, \varphi_1, \varphi_2... }.
	\end{equation}
	Denote $f \coloneqq \gDelta \circ \Fbeta\circ Q$ and consider a function
	\begin{equation}\label{key}
	\Psi(\psi) \coloneqq \E{f(\underbrace{x\otimes \sqrt{\psi} + \xtt \otimes \sqrt{1-\psi}}_{x(\psi)}, \underbrace{X\otimes \sqrt{\psi} + \tilde X  \otimes \sqrt{1-\psi}}_{X(\psi)})},
	\end{equation}
	where we use $\otimes$ to denote element-wise product and  radicals are applied to vectors in an element-wise manner. Clearly,
	\begin{equation}\label{key}
	\Psi(1) = \E{f(x, X)} \text{ and } \Psi\brac{0} = \E{f(\xtt, \tilde X )}
	\end{equation}
	and hence
	\begin{equation}\label{integralRepresentation}
	\begin{split}
	\abs{\E{f  (x, X)}\right. - \left.\E{f (\xtt, \tilde X )}} = \abs{\int\limits_0^{1} ... \int\limits_0^{1} {\brac{\prod_{i=1}^{n}\partial_{\varphi_i}} \Psi(\psi(\varphi))}\prod_{i=1}^{n} d\varphi_i}.
	\end{split}
	\end{equation}
	For the derivative we have
	\begin{equation}\label{key}
	\begin{split}
	\brac{\prod_{i=1}^{n}\partial_{\varphi_i}}\Psi(\psi(\varphi)) = \frac{1}{2}\E{\sum_{i=1}^n\sum_{j=I_i} \partial_{x_j} f(x(\psi), X(\psi))\brac{x_j \psi_i^{-1/2} - \xtt_j (1-\psi_i)^{-1/2}}  \right. \\+ \left.  \partial_{X_j} f(x(\psi), X(\psi))\brac{X_j \psi_i^{-1/2} - \tilde X _j (1-\psi_i)^{-1/2}}}.
	\end{split}
	\end{equation}
	Next we apply \ref{stein} (which applies only to centered vectors, thus the second term)
	\begin{equation}\label{key}
	\begin{split}
	\abs{\brac{\prod_{i=1}^{n}\partial_{\varphi_i}}\Psi(\psi(\varphi))} \le  \frac{1}{2}&\Delta_\Sigma\abs{\E{\sum_{i=1}^n\sum_{j=I_i}\sum_{k=I_i} \brac{\partial_j^2+\partial_k^2} f(x(\psi), X(\psi)) }}\\
	&+\frac{1}{2}\Delta_\mu\abs{\sum_{i=1}^n\sum_{j=I_i} \partial_{x_j} f(x(\psi), X(\psi))}  \\\eqqcolon \frac{1}{2}&\brac{T_1+T_2}.
	\end{split}
	\end{equation}
	Now we make use of \ref{onetermdeeper} and \ref{boundPhi} and choose $\beta=\logJ / \Delta$
	\begin{equation}\label{key}
	T_1 \le 32\Delta_\Sigma  \brac{\frac{\L+\logJ\L^2}{\Delta^2}  },
	\end{equation}
	\begin{equation}\label{key}
	T_2 \le \Delta_\mu\brac{\frac{2\L \sqrt{n}}{\Delta }}.
	\end{equation}
	Next, recalling \eqref{integralRepresentation} obtain
	\begin{equation}\label{key}
	\abs{\E{f  (x, X)} - \E{f (\xtt, \tilde X )}} \le \Delta_\mu\brac{\frac{\L\sqrt{n}}{\Delta  }} + 16\Delta_\Sigma  \brac{\frac{\L+\logJ\L^2}{\Delta^2}  }.
	\end{equation}
	Finally, in order to move from smooth functions to indicators we employ reasoning identical to the one in \ref{garlemma}.
	\begin{equation}\label{key}
	\begin{split}
	\Prob{Q(Z) < c} & \le  \E{\gDelta \circ \Fbeta \brac{Q(x, X) - c - \Delta}} \\
	& \le \E{\gDelta \circ \Fbeta \brac{Q(\xtt, \tilde X ) - c - \Delta}} +  \frac{1}{2}\brac{T_1+T_2}\\
	& \le \Prob{Q(\Zt) < c -2 \Delta} + \frac{1}{2}\brac{T_1+T_2}\\
	& \le  \Prob{Q(\Zt) < c } + \frac{1}{2}\brac{T_1+T_2} + \frac{8\sqrt{n}}{\s}\Delta\sqrt{4\logJ}.
	\end{split}
	\end{equation}
	Combination with a similar chain of larger-or-equal  finalizes the proof.
\end{proof}
We use the same version of Stein's identity as the authors of \cite{chernComp} have.
\begin{lemma}[Stein’s identity]\label{stein}
	Let $X \in \R^p$ be a normal centered vector and function $f:\R^p \rightarrow \R$ be a $C^1$ function with finite first derivatives. Then for all $j=1..p$
	\begin{equation}\label{key}
	\E{X_jf(X)} = \sum_{k=1}^{p}\E{X_jX_k}\E{\partial_k f(X)}.
	\end{equation}
\end{lemma}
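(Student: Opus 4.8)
This is the classical Gaussian integration-by-parts identity, and the plan is to derive it directly from the density of $X$.

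First I would dispose of the degenerate case. If $\Var{X}\eqqcolon\Sigma$ is singular, I replace $X$ by $X_\eps\sim\N{0}{\Sigma+\eps I_p}$ and let $\eps\downarrow 0$: the left-hand side is continuous in the covariance because $X_\eps\to X$ (e.g. in the coupling $X_\eps=X+\sqrt{\eps}G$ with $G\sim\N{0}{I_p}$ independent) and $f$ is $C^1$, while on the right-hand side the coefficients $\E{X_jX_k}$ are just the matrix entries and $\E{\partial_k f(X_\eps)}\to\E{\partial_k f(X)}$; hence it suffices to treat $\Sigma\succ 0$, which I assume from now on.

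With $p(x)\coloneqq(2\pi)^{-p/2}(\det\Sigma)^{-1/2}\exp\brac{-\tfrac12\dotprod{x}{\inv{\Sigma}x}}$ the one identity I need is $\partial_k p(x)=-\brac{\inv{\Sigma}x}_k p(x)$; multiplying by $\Sigma$ on the left this becomes, componentwise, $x_j p(x)=-\sum_{k=1}^p\Sigma_{jk}\,\partial_k p(x)$. Substituting into the definition of the expectation,
\begin{equation}
\E{X_j f(X)}=\int_{\R^p}x_j f(x)p(x)\,dx=-\sum_{k=1}^p\Sigma_{jk}\int_{\R^p}f(x)\,\partial_k p(x)\,dx,
\end{equation}
and then I integrate by parts in the coordinate $x_k$ of each summand. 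The boundary terms vanish since $p$ decays faster than any polynomial while $f$ grows at most polynomially, and Fubini lets me carry out the single-variable integration inside the others; this yields
\begin{equation}
-\sum_{k=1}^p\Sigma_{jk}\int_{\R^p}f(x)\,\partial_k p(x)\,dx=\sum_{k=1}^p\Sigma_{jk}\int_{\R^p}p(x)\,\partial_k f(x)\,dx=\sum_{k=1}^p\Sigma_{jk}\,\E{\partial_k f(X)}.
\end{equation}
Because $X$ is centered, $\Sigma_{jk}=\E{X_jX_k}$, which is exactly the asserted formula.

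The only genuinely delicate point is the legitimacy of the integration by parts together with the interchange of integrals: in full generality one needs a mild growth bound on $f$ so that $f(x)p(x)\to 0$ on the coordinate hyperplanes at infinity and $\E{X_jf(X)}$, $\E{\partial_k f(X)}$ are finite. In the use made of this lemma in the proof of \ref{lemmacomparison}, however, $f=\gDelta\circ\Fbeta\circ Q$ is composed with the bounded smooth indicator $\gDelta$, so $f$ and its first derivatives are bounded and every step above is immediate. As an alternative I could reduce to the scalar Stein lemma $\E{Zh(Z)}=\E{h'(Z)}$ for $Z\sim\N{0}{1}$ by writing $X=\Sigma^{1/2}W$ with $W\sim\N{0}{I_p}$, applying the scalar identity one coordinate of $W$ at a time and recombining, but the density computation above is the shortest route.
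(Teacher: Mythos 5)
Your derivation is correct, and it is worth noting that the paper does not prove this lemma at all: its ``proof'' is a pointer to Section A.6 of the cited book of Bovier, so your argument is a genuine self-contained replacement rather than a variant of the paper's. The route you take — write the Gaussian density $p$, use $\partial_k p(x)=-\brac{\inv{\Sigma}x}_k p(x)$, multiply by $\Sigma$ to get $x_jp(x)=-\sum_k\Sigma_{jk}\partial_k p(x)$, and integrate by parts coordinatewise — is the standard textbook proof and is sound. Your two supplementary points are exactly the ones the statement leaves vague: the hypothesis ``$C^1$ with finite first derivatives'' is most naturally read as bounded first derivatives, which forces at most linear growth of $f$, so the boundary terms vanish and $\E{X_jf(X)}$, $\E{\partial_k f(X)}$ are finite without any extra assumption; and the perturbation $\Sigma+\eps I_p$ with the coupling $X_\eps=X+\sqrt{\eps}G$ correctly disposes of a singular covariance, where the density argument would not apply directly (dominated convergence applies on both sides precisely because of the linear growth of $f$ and boundedness of $\partial_k f$). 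In the only place the lemma is used, the proof of \ref{lemmacomparison}, the function is $\gDelta\circ\Fbeta\circ Q$, whose relevant derivatives are bounded, so even the weakest reading of the hypotheses suffices there, as you observe. The alternative reduction to the scalar identity via $X=\Sigma^{1/2}W$ that you sketch would also work; nothing further is needed.
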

\begin{proof}
	See Section A.6 of \cite{Bovier:2004:SGC:1030154.1030162}.
\end{proof}

\begin{lemma}\label{sigmabootisclose}
	Consider $y, \yhat, \eps,\epsboot$ defined in Section \ref{sectionmethod}. Let $\Delta_f \coloneqq \infnorm{\E{y}-\yhat} = O\brac{(\log \Ibsize / \Ibsize)^{\kappa}}$ for some positive $\kappa\le 1/2$. Let $\eps_i$ be sub-Gaussian with $\g^2$. Then on a set of probability at least $1-\Ibsize^{10}$
	\begin{equation}\label{key}
	\abs{\Var{\eps_1} - \Varb{\epsboot_1}} = O\brac{\Delta_f^2}.
	\end{equation}
\end{lemma}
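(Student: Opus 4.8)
The plan is to expand the bootstrap variance $\Varb{\epsboot_1}$ in terms of the empirical residuals $\{\epshat_j\}_{j\in\Iboot}$ and compare it termwise with $\Var{\eps_1}$. Since $\epsboot_1 = s_1\epshat_{j_1}$ with $s_1$ uniform on $\{-1,1\}$ and $j_1$ uniform on $\Iboot$, and both are independent of the data, we have $\E[\flat]{\epsboot_1} = 0$ and
\begin{equation}
\Varb{\epsboot_1} = \E[\flat]{(\epsboot_1)^2} = \frac{1}{\Ibsize}\sum_{j\in\Iboot}\epshat_j^2.
\end{equation}
So the whole statement reduces to showing that the empirical second moment of the residuals is within $O(\Delta_f^2)$ of $\Var{\eps_1}$ on a set of high probability. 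First I would write $\epshat_j = y_j - \yhat_j = \eps_j + (\E{y_j} - \yhat_j) =: \eps_j + r_j$, where $\infnorm{r} = \Delta_f$ by hypothesis, so that
\begin{equation}
\frac{1}{\Ibsize}\sum_{j\in\Iboot}\epshat_j^2 = \frac{1}{\Ibsize}\sum_{j\in\Iboot}\eps_j^2 + \frac{2}{\Ibsize}\sum_{j\in\Iboot}\eps_j r_j + \frac{1}{\Ibsize}\sum_{j\in\Iboot}r_j^2.
\end{equation}

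The third term is deterministically bounded by $\Delta_f^2$. For the first term I would invoke a concentration bound for sums of independent sub-Gaussian squares (a Bernstein-type / sub-exponential tail): since $\eps_j$ is sub-Gaussian with $\g^2$, $\eps_j^2 - \Var{\eps_j}$ is sub-exponential, hence $\frac1{\Ibsize}\sum_{j\in\Iboot}\eps_j^2 - \Var{\eps_1} = O_P\brac{\g^2\sqrt{\log\Ibsize/\Ibsize}}$ on a set of probability at least $1 - \Ibsize^{-10}$ (choosing the deviation level $\asymp \g^2\sqrt{\log\Ibsize/\Ibsize}$ to get the polynomial probability). Since $\kappa \le 1/2$, we have $\sqrt{\log\Ibsize/\Ibsize} \le \brac{\log\Ibsize/\Ibsize}^{2\kappa} \asymp \Delta_f^2$, so this term is $O(\Delta_f^2)$. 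For the cross term, by Cauchy--Schwarz $\abs{\frac1{\Ibsize}\sum_{j\in\Iboot}\eps_j r_j} \le \Delta_f \cdot \brac{\frac1{\Ibsize}\sum_{j\in\Iboot}\eps_j^2}^{1/2}$; on the same event $\frac1{\Ibsize}\sum\eps_j^2 = \Var{\eps_1} + O(\Delta_f^2) = O(1)$, so the cross term is $O(\Delta_f)$.

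This last bound, $O(\Delta_f)$ rather than $O(\Delta_f^2)$, is the one subtle point, and it is what I expect to be the main obstacle — superficially it looks too weak. The resolution is that $\Delta_f = o(1)$, so a bound of $O(\Delta_f)$ on the cross term is not yet $O(\Delta_f^2)$; to recover the stated rate one has to use more than $\infnorm{r}\le\Delta_f$. The natural fix is to additionally observe that $\yhat$ is a consistent Gaussian-process regressor built from $\{(y_j,X_j)\}_{j\in\Iboot}$ itself, so $\frac1{\Ibsize}\sum_{j\in\Iboot}\eps_j r_j$ is not a generic inner product but exhibits the usual "orthogonality" of residuals to the fitted component; bounding it via $\infnorm{r}$ and the $\ell^2$-norm of $\eps$ restricted to $\Iboot$ together with the smoothing structure gives a second factor of $\Delta_f$ (equivalently, $\abs{\frac1{\Ibsize}\sum\eps_j r_j} \le \brac{\frac1{\Ibsize}\sum r_j^2}^{1/2}\brac{\frac1{\Ibsize}\sum\eps_j^2}^{1/2}$ and one uses that $\frac1{\Ibsize}\sum r_j^2 \le \Delta_f^2$ while the empirical noise energy concentrates — but the point is this already gives $O(\Delta_f)$, so the genuinely needed ingredient is the projection identity making the cross term $O(\Delta_f^2)$). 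Assembling the three pieces and a union bound over the single event of probability $\ge 1-\Ibsize^{-10}$ yields $\abs{\Var{\eps_1} - \Varb{\epsboot_1}} = O(\Delta_f^2)$ on that set, which is the claim. I would also note the mild typo that the probability should read $1 - \Ibsize^{-10}$.
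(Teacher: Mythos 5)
Your skeleton is the same as the paper's: write $\epshat_j=\eps_j+r_j$ with $\infnorm{r}\le\Delta_f$, expand the empirical second moment of the residuals, and control the squared-bias, pure-noise and cross terms separately (your use of the sign-flipping to identify $\Varb{\epsboot_1}$ with the uncentered empirical second moment is a harmless simplification; the paper keeps the $\bar\eps$-centering and the extra $\bar\eps^2$ and $\bar\eps\,r$, $\bar\eps\,\eps$ terms). However, two of your three terms are not actually controlled at the claimed rate. For the pure-noise term, your inequality $\sqrt{\log\Ibsize/\Ibsize}\le\brac{\log\Ibsize/\Ibsize}^{2\kappa}$ ``since $\kappa\le1/2$'' goes the wrong way: for a quantity smaller than one, $x^{1/2}\le x^{2\kappa}$ requires $2\kappa\le 1/2$, i.e. $\kappa\le1/4$. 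For $\kappa\in(1/4,1/2]$ the $O\brac{\sqrt{\log\Ibsize/\Ibsize}}$ fluctuation of $\frac1\Ibsize\sum_{j}\eps_j^2$ dominates $\Delta_f^2$, so your argument does not give $O(\Delta_f^2)$ for that term. (The paper's own proof, which truncates $\abs{\eps_i}\le\e=\sqrt{\log 2\Ibsize}$ and applies Hoeffding to $\eps_i$ and $\eps_i^2$, carries the analogous term $E'=\log(2\Ibsize)/\sqrt{\Ibsize}$, so this point is delicate there as well, but that is the bound the paper works with.)

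The more substantial gap is the cross term, which you explicitly leave unproved: you correctly note that Cauchy--Schwarz only yields $O(\Delta_f)$ and then defer to an unspecified ``projection identity'' or orthogonality of the noise to the fitted component. No such exact identity is available here: GP regression is a ridge-type shrinkage, not an orthogonal projection, and since $\yhat$ is trained on the very pairs indexed by $\Iboot$, the products $\eps_j r_j$ are neither independent of each other nor centered (for a linear smoother $\yhat=Sy$ one has $\E{\frac1\Ibsize\sum_j\eps_j r_j}=-\mathrm{Var}[\eps_1]\,\mathrm{tr}(S)/\Ibsize$), so turning the cross term into $O(\Delta_f^2)$ would require a genuine effective-degrees-of-freedom argument that you do not supply. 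The paper takes a different, more elementary route for this term: after the truncation step it bounds it by $2E\Delta_f$ with $E=\sqrt{\log(2\Ibsize)/\Ibsize}$, and since $E\le\Delta_f$ when $\kappa\le1/2$ this is at most $2\Delta_f^2$; no orthogonality enters at all. So as written your proposal has a genuine gap (one wrong inequality and one acknowledged missing ingredient), even though the decomposition matches the paper's. You are right, incidentally, that the probability in the statement should read $1-\Ibsize^{-10}$.
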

\begin{proof}
	By construction
	\begin{equation}\begin{split}
	\Varb{\epsboot_1} &= \frac{1}{\Ibsize}\sum_{i \in \Iboot}  \brac{\hat\eps_i - \underbrace{\frac{1}{\Ibsize}\sum_{i \in \Iboot} \hat\eps_i}_{\bar{\eps}}}^2 \\
	&=\frac{1}{\Ibsize}\sum_{i \in \Iboot} \brac{\E{y_i}-\yhat_i + \eps_i - \bar\eps}^2 \\
	&= \frac{1}{\Ibsize}\sum_{i \in \Iboot} \brac{\E{y_i}-\yhat_i}^2 + \eps_i^2 + \bar\eps^2  \\ &~~~~+ 2\eps_i\brac{\E{y_i} - \yhat_i}-2\bar\eps\brac{\E{y_i} - \yhat_i} - 2\bar\eps\eps_i.
	\end{split}
	\end{equation}
	Now due to sub-Gaussianity for a positive $\e$
	\begin{equation}\label{key}
	\forall i : \Prob{\abs{\eps_i} > \e} \le 2\exp(-\e^2/\g^2)
	\end{equation}
	and hence
	\begin{equation}\label{key}
	\Prob{\underbrace{\exists i : \abs{\eps_i} > \e}_\mathcal{E}} \le p' \coloneqq  2\Ibsize\exp(-\e^2/\g^2).
	\end{equation}
	On set $\mathcal{E}$ Hoeffding inequality applies to $\eps_i$ and their squares:
	\begin{equation}\label{key}
	\Prob{\abs{\frac{1}{\Ibsize}\sum_{i \in \Iboot}\eps_i} > E} \le p''\coloneqq 2\exp\brac{-2\Ibsize E^2/\e},
	\end{equation}
	\begin{equation}\label{key}
	\Prob{\abs{\frac{1}{\Ibsize}\sum_{i \in \Iboot}\eps_i^2 - \Var{\eps_i}} > E'} \le p'''\coloneqq 2\exp\brac{-2\Ibsize E'^2/\e^2}.
	\end{equation}
	Therefore,	 with probability at least $1-p'-p''-p'''$
	\begin{equation}
		\abs{\bar\eps}\le \Delta_f + E
	\end{equation}
	and hence
	\begin{equation}\label{key}
	\abs{\Var{\eps_i} - \Varb{\epsboot_i} }\le \Delta_f^2 + E' + 2E\Delta_f + 2\brac{\Delta_f + E} (\Delta_f+2E+E^2).
	\end{equation}
	Clearly, the choice
	\begin{equation}\label{key}
	\e = \sqrt{\log 2\Ibsize},
	\end{equation}
	\begin{equation}\label{key}
	E = \sqrt{\log (2\Ibsize)/\Ibsize},
	\end{equation}
	\begin{equation}\label{key}
	E' = {\log \brac{2\Ibsize} \bigg/ \sqrt{\Ibsize}}.
	\end{equation}
	makes  $p',p'',p'''$ polynomially decreasing.
	Substitution yields the claim.
\end{proof}

\section{Proof of sensitivity result} \label{sensProofsec}
\begin{proof}[Proof of \ref{sensitivityTheorem}]
	Denoting the probability density functions in the world of the Gaussian Process Regression model as $p(\cdot)$ by construction we have
	\begin{equation}\label{key}
	\begin{split}
	A_n(t) &= \log \frac{p(\ylt) p(\yrt)}{p(\yt)} \\
	&=\log \frac{p(\yrt)}{p(\yrt|\ylt)}.
	\end{split}
	\end{equation}
	Further, denote $\frt \coloneqq \E{\yrt}$ and $\epsrt \coloneqq \yrt - \frt$. Define shorthand notation $\Krt\coloneqq K(\Xrt)$ and $\Klt \coloneqq K(\Xlt)$. Also let $\hatfrt$ and $\hatV$ denote predictive mean and variance of the Gaussian Process Regression for $\Xrt$ given $\Xlt$ and $\ylt$. Now recall the posterior is Gaussian:
	\begin{equation}
		p(\yrt|\ylt) = \N{\yrt|\hatfrt}{\hatV}.
	\end{equation}
	Define a norm $\norma{x}{A} \coloneqq \norm{A^{1/2}x}$ for an arbitrary positive-definite symmetric matrix $A$. Clearly, $\normasq{x}{A} = \dotprod{x}{Ax}$. Now trivial algebra yields

	\begin{equation}
	\begin{split}
		\Ant \cong &\underbrace{\normasq{\frt - \hatfrt}{\invhatV}}_{T_1} +  \underbrace{\normasq{\epsrt}{\invhatV - \invKrt}}_{T_2} \\&+  \underbrace{2\dotprod{\brac{\frt-\hatfrt}\invhatV - \frt\invKrt}{ \epsrt}}_{T_3} ,
	\end{split}
	\end{equation}
	where we use $\cong$ to denote ``equality up to an additive deterministic constant''.
	Consider a matrix $K(\Xt)$ being a block $2\times 2$ matrix with blocks of equal size:
	\begin{equation}
		K(\Xt) = \left(\begin{array}{cc}
				\Klt& \K \\
				\K^T & 		\Krt
		\end{array}
		\right).
	\end{equation}
	Notice that $\hatV$ is its Schur complement, thus $\eigmax{\hatV} \le \eigmax{K(\Xt)} \le C$ (the second inequality is due to \ref{assboundedEigenvalues}).
	Using $\sigma^2>0$ we have $\eigmin{\hatV} > 1/c$ and $\eigmin{\Krt} > 1/c$ for some $c$ independent of $n$. To sum these observations up:
	\begin{equation}
		\exists c > 0: \eigmin{\hatV}, \eigmax{\hatV}, \eigmax{\invKrt} \in (1/c, c).
	\end{equation}
	Having established control over these eigenvalues, we are ready to bound the terms $T_2$ and $T_3$ from above under both $\Hnull$ and $\Hone$, while $T_1$ should be bounded from above under $\Hnull$ and from below under $\Hone$.
	Now we bound the test statistic $\Ant$ under $\Hnull$. Denote $\Deltaf \coloneqq \infnorm{\frt - \hatfrt}$. Then
	\begin{equation}
		T_1 \le c n \Deltaf^2.
	\end{equation}
	In order to bound the second term on a set of high probability we employ \ref{quadtails} and obtain for a positive $\t$
	\begin{equation}\label{T2bound}
		\Prob{\abs{T_2 - \E{T_2}}  \ge {4\g^2c\sqrt{n\t}}} \le \exp(-\t).
	\end{equation}
	The third term will be controlled using sub-Gaussianity of $\epsrt$. For any unit vector $u$ and positive $\e$
	\begin{equation}
	\Prob{ \abs{\dotprod{u}{\epsrt}} \ge \g\sqrt{\e}} \le  2\exp(-\e)
	\end{equation}
	and clearly,
	\begin{equation}
		\norm{\brac{\frt-\hatfrt}\invhatV - \frt\invKrt} \le 4\sqrt{n}c(F + \Deltaf),
	\end{equation}
	where $F \coloneqq \max\{\infnorm{f}, \infnorm{\fone}, \infnorm{\ftwo}\}$. Hence, on a set of probability at least $1-2\exp(-\e)$
	\begin{equation}\label{T3bound}
		\abs{T_3} \le 8\sqrt{n}\g\e c(F + \Deltaf).
	\end{equation}
	Finally, we choose $\t \coloneqq 10\log n$ and $\e \coloneqq 10\log n$. Now under $\Hnull$ bound $\Deltaf$ by \ref{gpconsistency}, recall $\kappa < 1/2$ and obtain
	\begin{equation}
	\begin{split}
		\abs{\Ant - \E{T_2}} &= O\brac{n^{1-2\kappa}\log^{2\kappa}n +  \sqrt{n\log n}} \\
		&= O\brac{n^{1-2\kappa}\log^{2\kappa}n}
	\end{split}
	\end{equation}
	on a set of probability at least $1-3/n^{10} \rightarrow 1$ as $n\rightarrow+\infty$.
	Now we use \ref{validityTheorem} along with the fact that for $n$ large enough $\alpha > R_A + 3/n^{10}$ and obtain on a set of probability approaching $1$
	\begin{equation}
	\abs{\xntboot -  \E{T_2}} = O\brac{n^{1-2\kappa}\log^{2\kappa}n}.
	\end{equation}
	On the other hand, under $\Hone$ the bounds \eqref{T2bound} and \eqref{T3bound} still hold and
	\begin{equation}
		\begin{split}
			T_1 &\ge \eigmin{\hatV} \norm{\frt - \hatfrt}^2 \\
			&\ge  \frac{1}{c}n\brac{\B_n^2-\Delta_f^2}.
		\end{split}
	\end{equation}
	Finally, choose $n=\nstar$, $t = \tau$, and recall assumption \eqref{deltaRate} to conclude $A_n(\tau) > x^\flat_{n,\alpha}(\tau)$  for large $n$ with probability approaching $1$.
\end{proof}


The following result bounds a quadratic form of a sub-Gaussian vector with high probability. It is a direct corollary of Theorem 1.1 (Hanson-Wright inequality) stated in \cite{rudelson2013}.
\begin{lemma}\label{quadtails}
	Consider a vector $x\in \R^n$ sub-Gaussian with $\g^2$ and a positive-definite matrix $A$ of size $n\times n$. Let there be a constant $\Lambda$ independent of $n$ s.t. $\eigmax{A} \le \Lambda$. Then for a positive $\t$, large enough $n$ and some absolute positive $c$
	\begin{equation}
		\Prob{\abs{\dotprod{x}{xA} -\E{\dotprod{x}{xA}}}\ge c\g^2 \Lambda\sqrt{n\t}} \le \exp(-\t).
	\end{equation}
\end{lemma}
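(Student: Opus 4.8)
The plan is to read this off from the Hanson--Wright inequality (Theorem~1.1 of \cite{rudelson2013}) after two routine reductions: translating the sub-Gaussianity hypothesis into the $\psi_2$-norm normalization used there, and bounding the two matrix norms occurring in Hanson--Wright by $\Lambda$ and $n$.

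First I would recall the statement: for a vector with independent centered components whose $\psi_2$-norms are all at most $K$, and any symmetric $A$,
\begin{equation}
\Prob{\abs{\dotprod{x}{Ax} - \E{\dotprod{x}{Ax}}} > s}\le 2\exp\brac{-c_0\min\brac{\frac{s^2}{K^4\Fnorm{A}^2},\ \frac{s}{K^2\norm{A}}}},
\end{equation}
with $c_0$ an absolute constant. The present hypothesis (each $x_i$ sub-Gaussian with $\g^2$ in the moment-generating-function sense) gives $\norm{x_i}_{\psi_2}\le C_1\g$ for an absolute $C_1$, so one may take $K=C_1\g$. Since $A$ is positive definite with $\eigmax{A}\le\Lambda$, all its eigenvalues lie in $(0,\Lambda]$, hence $\norm{A}=\eigmax{A}\le\Lambda$ and $\Fnorm{A}^2=\sum_i\lambda_i(A)^2\le n\Lambda^2$.

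Substituting $s=c\,\g^2\Lambda\sqrt{n\t}$, the first branch of the minimum becomes $\gtrsim c^2\t$ and the second becomes $\gtrsim c\sqrt{n\t}$, with implied constants depending only on $c_0,C_1$. Choosing $c$ large enough makes a constant multiple of the first branch dominate $\t$, and makes the second branch dominate $\t$ once $n$ is large relative to $\t$; this is exactly where the ``large enough $n$'' hypothesis is used, and it costs nothing in the applications, where $\t=10\log n$. Hence $2\exp(-c_0\min(\cdot,\cdot))\le\exp(-\t)$, which is the assertion.

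I expect the only genuine care to be in the constant bookkeeping: one must choose $c$ (and implicitly keep $\t$ bounded away from $0$, as it is in all uses of the lemma) so that the factor $2$ and the additive $\log 2$ are absorbed, leaving the clean $\exp(-\t)$; and one must note that the linear-in-$s$ branch of the Hanson--Wright minimum is non-dominant precisely in the regime where $\t$ is of smaller order than $n$. Everything else is substitution, so there is no real obstacle beyond invoking the right inequality and tracking constants. I would also remark that Theorem~1.1 of \cite{rudelson2013} is stated for independent components, which is what is available in every application of this lemma, e.g.\ to the noise vector $\epsrt$, so no strengthening of the cited result is needed.
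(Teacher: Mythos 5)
Your proposal is correct and is exactly the route the paper intends: the paper gives no separate argument, stating the lemma as a direct corollary of Theorem~1.1 of the cited Hanson--Wright paper, and your reduction (translating the MGF sub-Gaussianity into a $\psi_2$ bound, using $\norm{A}\le\Lambda$, $\Fnorm{A}^2\le n\Lambda^2$, and substituting $s=c\g^2\Lambda\sqrt{n\t}$) is the routine bookkeeping that makes that corollary precise. Your two caveats --- that $\t$ must be bounded away from $0$ and $n$ large relative to $\t$ to absorb the constants, and that the cited inequality assumes independent components, as holds in every application (e.g.\ to $\epsrt$) --- are apt and consistent with how the paper uses the lemma.
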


\section{Anti-concentration inequality}
This section uses notation introduced in Section \ref{sectionGAR}.
\begin{lemma}\label{anticoncentration}
	Consider a $2p$-dimensional Gaussian vector $z = (x, X)$, where $x$ and $X$ are $p$-dimensional. Further, let $\Var{x} = \sigma^2 I_p$ and $Cov(x_j, X_j) = Cov(\eta, \eta^2)$ for arbitrary $1 \le j \le p$ and $\eta \sim \N{0}{\sigma^2}$. Finally, let $\Var{X} = \Var{\eta^2}I_p$.  Then
	for an arbitrary vector $C$ and $\delta \in \R$
	\begin{equation}
		\Prob{Q( z) < C + \delta} - \Prob{Q( z) < C} \le 3\sqrt{\frac{n}{\s^2}}\delta\left(\sqrt{4\log p} + 2\right)
	\end{equation}
	where
	\begin{equation}
		\s^2 \coloneqq \brac{\min_{j\in J}\Fnorm{\Bj}^2+\norm{\E{x}}^2}\sigma^2
	\end{equation}
	 and the map $Q(\cdot)$ is defined by \eqref{Qdef}.
\end{lemma}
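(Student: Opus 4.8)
The plan is to follow the standard route to Gaussian anti-concentration — soft-max smoothing followed by Gaussian integration by parts, as in the proof of \ref{garlemma} and in the cited work of \cite{chernComp} — while exploiting one structural feature of $Q(\cdot)$: even though $Q(z)_j$ is a quadratic form in the $x$-part of $z$, it is \emph{affine in every individual coordinate} of $z$, because each $\Ej$ has zero diagonal by \eqref{diagoffdiagdecomposition}, so no squared-coordinate term ever occurs. Setting $M(z)\coloneqq\max_{j\in J}\brac{Q(z)_j-C_j}$ one has the elementary identity $\Prob{Q(z)<C+\delta}-\Prob{Q(z)<C}=\Prob{0\le M(z)<\delta}$, so it suffices to bound $\sup_{a\in\R}\Prob{a\le M(z)<a+\delta}$, i.e.\ to bound the density of $M(z)$ by $3\sqrt{n/\s^2}\brac{\sqrt{4\log p}+2}$.

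To estimate that density I would smooth $M$ by $\Fbeta$: since $\beta^{-1}\Fbeta(Q(z)-C)$ lies within $[0,\beta^{-1}\log p]$ of $M(z)$, it is enough to bound, uniformly in $\beta$, the density $f_\beta$ of $u_\beta(z)\coloneqq\beta^{-1}\Fbeta(Q(z)-C)$ and then let $\beta\to\infty$. Now $z\sim\N{\E{z}}{\Gamma}$ with $\Gamma$ block-diagonal ($\sigma^2 I$ on the $x$-block, $\Var{\eta^2}I$ on the $X$-block, zero cross-block since $\mathrm{Cov}(\eta,\eta^2)=0$), and $u_\beta$ is smooth with $\nabla u_\beta(z)=\sum_j w_j(z)\nabla Q(z)_j$ for the soft-max weights $w_j\ge0$, $\sum_j w_j=1$. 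Applying Stein's identity (\ref{stein}) with the vector field $v(z)\coloneqq\Gamma\nabla u_\beta(z)\big/\dotprod{\nabla u_\beta(z)}{\Gamma\nabla u_\beta(z)}$, which satisfies $\dotprod{\nabla u_\beta}{v}\equiv1$, gives (after the routine care on the measure-zero set where $\nabla u_\beta$ degenerates) $f_\beta(t)\le\E{\abs{\mathrm{div}\,v(z)}+\abs{\dotprod{v(z)}{\inv\Gamma(z-\E{z})}}}$; the two expectations are controlled by the soft-max derivative bounds (cf.\ \ref{boundPhi}) together with sub-Gaussian and Hanson–Wright tails (\ref{quadtails}) for $\dotprod{\nabla Q(z)_j}{z-\E{z}}$ and for $\norm{\nabla^2u_\beta}$, \emph{provided} one has a high-probability lower bound $\dotprod{\nabla u_\beta(z)}{\Gamma\nabla u_\beta(z)}\gtrsim\s^2/n$.

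That lower bound is the crux, and the obstacle is that $\nabla u_\beta$ is only a convex combination of the per-index gradients $\nabla Q(z)_j=\brac{\tfrac2{\sqrt n}\Ej x_n(j);\ \tfrac1{\sqrt n}\Dj}$, which could in principle nearly cancel. I would resolve this by using that the soft-max weights concentrate on the near-maximizers of $Q(z)_\cdot-C$, so that $\dotprod{\nabla u_\beta}{\Gamma\nabla u_\beta}$ equals, up to lower-order terms, $\dotprod{\nabla Q(z)_{j^\ast}}{\Gamma\nabla Q(z)_{j^\ast}}$ for such an index $j^\ast$; then $\dotprod{\nabla Q(z)_j}{\Gamma\nabla Q(z)_j}=\tfrac{4\sigma^2}n\norm{\Ej x_n(j)}^2+\tfrac{\Var{\eta^2}}n\norm{\Dj}^2$, and \ref{quadtails} lets me replace $\norm{\Ej x_n(j)}^2$ by $\norm{\Ej\,\E{x_n(j)}}^2+\sigma^2\Fnorm{\Ej}^2$ up to fluctuations; since $\Fnorm{\Bj}^2=\Fnorm{\Ej}^2+\norm{\Dj}^2$, this yields $\dotprod{\nabla Q(z)_j}{\Gamma\nabla Q(z)_j}\gtrsim\tfrac{\sigma^2}n\brac{\Fnorm{\Bj}^2+\norm{\E{x}}^2}\ge\s^2/n$, the mean term being supplied by the linear-in-$x$ part of the quadratic form (here the spectral control on $\Bj$ and a lower bound on $\sigma^2$, both in force in the applications, are used). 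Assembling the pieces, $\Prob{0\le M(z)<\delta}\le\brac{\delta+\beta^{-1}\log p}\,\infnorm{f_\beta}\to\delta\cdot O\brac{\sqrt{n/\s^2}\sqrt{\log p}}$ as $\beta\to\infty$, and carrying the explicit constants through the soft-max and Stein estimates (exactly as the constants are tracked in \ref{garlemma}) produces the factor $3$ and the term $\brac{\sqrt{4\log p}+2}$, which completes the proof.
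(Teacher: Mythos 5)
The step you yourself call the crux is where the argument genuinely breaks. Your density bound requires a lower bound on $\dotprod{\nabla u_\beta(z)}{\Gamma\nabla u_\beta(z)}$ that is valid wherever the Stein/divergence identity is integrated, but the only deterministic part of $\dotprod{\nabla Q(z)_j}{\Gamma\nabla Q(z)_j}=\tfrac{4\sigma^2}{n}\norm{\Ej x_n(j)}^2+\tfrac{\Var{\eta^2}}{n}\norm{\Dj}^2$ is the second summand, which can vanish (take $\Bj$ with zero diagonal), while $\norm{\Ej x_n(j)}^2$ is random and can be arbitrarily small; Hanson--Wright only gives $\norm{\Ej x_n(j)}^2\gtrsim\norm{\Ej\E{x_n(j)}}^2+\sigma^2\Fnorm{\Ej}^2$ on an event of probability $1-\epsilon$, never surely. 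A high-probability bound cannot deliver the stated inequality: the right-hand side is purely proportional to $\delta$ with no additive remainder and must hold for arbitrarily small $\delta$, so any exceptional set of fixed positive probability on which the density of $u_\beta(z)$ is uncontrolled ruins the bound; nor can you simply restrict to the good event inside the Gaussian integration by parts, since that creates boundary terms, and $\E{\abs{\mathrm{div}\,v(z)}}$ itself involves inverse powers of the gradient norm and is not obviously finite near the degeneracy set (which is not measure zero in any useful sense for this expectation). The finite-$\beta$ reduction has the same defect in another guise: the soft-max weights spread over all near-maximizers, whose gradients may nearly cancel, so replacing $\nabla u_\beta$ by $\nabla Q(z)_{j^\ast}$ is not a lower-order correction. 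Finally, even on the good event your chain needs $\norm{\Ej\E{x_n(j)}}^2\gtrsim\norm{\E{x}}^2$, i.e.\ a lower bound on the spectrum of $\Ej$, which is nowhere available (only upper spectral bounds on $\Bj$ are in force); and the claim that the explicit constants $3$ and $\brac{\sqrt{4\log p}+2}$ ``fall out'' of the soft-max/Stein computation is asserted, not derived.

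The paper avoids all of this by linearizing instead of smoothing: precisely because $\Ej$ has zero diagonal, each $Q(z)_j$ is a \emph{linear} functional of the lifted vector $u(x)=(x_1,\dots,x_p,X_1,\dots,X_p,x_1^2,\dots,\sqrt{2}x_1x_2,\dots)$, so the event $\{Q(z)<C\}$ is a convex polyhedron in the lifted space whose facet normals are deterministic (they depend only on $\Ej$, $\Dj$ and $\E{x}$) and whose normalization produces the factor $\sqrt{n/\s^2}$; a Nazarov-type Gaussian surface-measure argument over the facets (\ref{anticoncentrationKEY} together with \ref{lemma3}, splitting facets by their distance to the origin) then yields the factor $\brac{\sqrt{4\log p}+2}$ with no probabilistic lower bound on any gradient. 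If you wish to salvage your route, you would have to replace your random gradient lower bound by a deterministic one --- which is exactly what the lifting accomplishes --- so the repair essentially collapses into the paper's argument.
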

\begin{proof}
	Introduce an isotropic Gaussian vector $\tilde z = \Var{z}^{-1/2}z\Var{z}^{-1/2}$ and notice, $\sigma^2/3 \le \Var{\tilde z_j} \le 3\sigma^2$ for all $j$. Applying \ref{anticoncentrationKEY} to $\tilde z$ yields the claim.
\end{proof}

The rest of the proofs of this section mostly follow the Nazarov's inequality proof presented in \cite{nazarov}.

Define a map $u : \R^{2p}\rightarrow \U$, where $\U \coloneqq  \R^{(p+5)p/2}$:
\begin{equation}\label{key}
	\begin{split}
		&u\left((x_1, x_2, .. , x_p)^T, (X_1, X_2, .. , X_p)^T\right) \\ = &(x_1, x_2, .. , x_p, X_1, X_2, .. , X_p,x_1^2, x_2^2, .. , x_p^2, \sqrt{2}x_1x_2, \sqrt{2}x_1x_3, .., \sqrt{2}x_{p-1}x_p)^T.
	\end{split}
\end{equation}
 With a slight abuse of notation we will use $u$ to denote both the map and an element of its image.
\begin{lemma}\label{anticoncentrationKEY}
	Consider $x \sim \N{0}{I_{2p}}$ and $a_1, a_2, .. a_{p(p+5)/2} \in \{u \in \U : \norm{u} = 1\}$ along with $b_1, b_2,.., b_{p(p+5)/2} \in \R$. Then for all positive $\delta$:
	\begin{equation}\label{key}
	\Prob{\forall j : a_j^Tu(x) \le b_j + \delta} - \Prob{\forall j : a_j^Tu(x) \le b_j} \le \delta\left(\sqrt{4\log p} + 2\right).
	\end{equation}
\end{lemma}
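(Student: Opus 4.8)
The plan is to follow the classical Nazarov argument, reducing the probability of a thin slab to a surface-integral estimate via a Gaussian isoperimetric / log-concavity inequality. First I would introduce the convex polyhedron $A \coloneqq \{u \in \U : a_j^T u \le b_j \text{ for all } j\}$ and its $\delta$-enlargement in the relevant directions, $A^\delta \coloneqq \{u : a_j^T u \le b_j + \delta \text{ for all } j\}$. Pulling back along the map $u(\cdot)$, what we must bound is $\gamma_{2p}(u^{-1}(A^\delta)) - \gamma_{2p}(u^{-1}(A))$, where $\gamma_{2p}$ is the standard Gaussian measure on $\R^{2p}$. The key structural observation is that although $u$ is a quadratic map, each constraint $a_j^T u(x) \le b_j$ is a \emph{quadratic} inequality in $x$, and the image measure $\mu \coloneqq (u)_* \gamma_{2p}$ on $\U$ has a density that is log-concave along the coordinate directions corresponding to the linear part $(x_1,\dots,x_p,X_1,\dots,X_p)$; more precisely, I would exploit that conditionally on the quadratic coordinates, or after an appropriate one-dimensional slicing, the relevant marginal densities are bounded in a way that produces the $\sqrt{\log p}$ factor.

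Concretely, the main steps would be: (i) write the difference of probabilities as $\int_0^\delta \frac{d}{ds}\Prob{\forall j : a_j^T u(x) \le b_j + s}\, ds$ and express the derivative as a sum over $j$ of a surface integral of the Gaussian density over the face $\{a_j^T u(x) = b_j + s\}$ intersected with the other half-spaces; (ii) bound each such surface integral by the Gaussian density of a suitable one-dimensional marginal, using that $a_j$ is a unit vector so the ``gradient'' of the constraint is controlled; (iii) bound the one-dimensional marginal density of $a_j^T u(x)$ --- here is where the quadratic terms $x_k x_\ell$ enter, and one uses that a linear combination of a Gaussian and a (shifted) chi-square type variable has a density bounded by something of order $1$ near its typical location, but with tails requiring the level $b_j + s$ to not be too extreme; (iv) take a union bound over the $p(p+5)/2 = O(p^2)$ constraints, which costs a factor $\log(p^2) = 2\log p$ inside a square root, yielding the $\sqrt{4\log p}$ term, with the additive $2$ absorbing the contribution of the at-most-linear part and the boundary terms of the integration by parts.

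The hard part will be step (iii): controlling the density of $a_j^T u(x)$ uniformly over all unit vectors $a_j \in \U$. Unlike the pure-Gaussian Nazarov setting where each $a_j^T x$ is exactly $\N{0}{1}$, here the inclusion of the quadratic monomials $x_k^2$ and $\sqrt{2}x_k x_\ell$ means $a_j^T u(x)$ is a general quadratic form in a Gaussian vector plus a linear term, so its density is not uniformly bounded --- it can blow up like $1/\sqrt{t}$ near the infimum of the support if the quadratic part is (nearly) positive semidefinite. I expect the resolution is to use that we only need the bound at the specific levels $b_j + s$ that are actually attained with positive probability along the integration, combined with a log-concavity property of quadratic-Gaussian densities away from the boundary of their support, so that the relevant density values are $O(\sqrt{\log p})$ rather than unbounded; making this rigorous while tracking the exact constant $\sqrt{4\log p} + 2$ is the delicate computational core of the proof, and it is presumably carried out following the structure of the Nazarov's-inequality proof in \cite{nazarov} adapted to the map $u$.
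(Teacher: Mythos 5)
Your plan starts in the right place (lifting to the polyhedron $K=\{u\in\U:\forall j\ a_j^Tu\le b_j\}$ and its enlargement, and writing the difference as an integral of a boundary term), but the core of your argument --- step (iii), bounding the one-dimensional density of $a_j^Tu(x)$ uniformly over unit vectors $a_j$ and then paying a union bound over the $O(p^2)$ constraints --- has a genuine gap, and you half-acknowledge it yourself. For $a_j$ supported on a single quadratic coordinate, $a_j^Tu(x)=x_1^2$, the density blows up like $1/\sqrt{t}$ near $0$, and since nothing constrains the levels $b_j+s$ away from the edge of the support, there is no uniform $O(\sqrt{\log p})$ bound on the relevant density values; the ``log-concavity away from the boundary of the support'' fix you sketch cannot be made uniform in $a_j$ and $b_j$. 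Moreover, even if such a bound held, summing a worst-case density bound over all $O(p^2)$ constraints does not produce $\sqrt{4\log p}+2$; the additive structure of the final constant comes from a different mechanism than a union bound.

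The paper's proof never bounds marginal densities of the quadratic forms. It works entirely with the pushforward measure $\gamma_p(\cdot)=\Prob{u(x)\in\cdot}$ on $\U$, where the constraints are linear and $K$ is a convex polyhedron, and runs Nazarov's geometric argument: decompose $K(\delta)\setminus K$ into the normal regions $N_F(\delta)$ over the faces $F$ of $K$ (lower-dimensional faces contribute $o(\delta)$), and for each facet compare the surface integral to the Gaussian mass of the full normal region via \ref{lemma3}, which gives
\begin{equation}
\lim_{\delta\downarrow 0}\frac{\gammap(\NFdelta)}{\delta}\le\brac{\sqrt{\dist(0,F)}+1}\gammap(N_F),
\end{equation}
using the parametrization of $N_F$ along the outward normal and the substitution $\uinv(h+tv)=x+t'n$ with $t'=\sqrt{t}$. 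The constant then comes from splitting facets at distance $4\log p$ from the origin: distant facets are handled by the Gaussian density decay together with the fact that the number of facets is at most $p^2$ (total contribution at most $1$), while nearby facets are summed using that the regions $N_F$ are disjoint, so $\sum_F\gammap(N_F)\le 1$, yielding $\sqrt{4\log p}+1$. This disjointness ("shadow") argument is exactly what replaces your per-constraint density bound and is what makes the sum over $O(p^2)$ constraints cost only an additive constant rather than a multiplicative factor; without it, your outline cannot be completed as stated.
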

\begin{proof}
	Define a  set $K(t) \coloneqq \left\{u \in \U:\forall j~ a_j^Tu \le b_j + t\right\}$, and a function $G(t) \coloneqq \Prob{u(x) \in K(t)}$. $G$ is absolutely continuous distribution function and hence
	\begin{equation}\label{key}
	G(\delta) - G(0) = \int_{0}^{\delta} G'_+(t)d t,
	\end{equation}
	where $G'_+$ denotes the right derivative of $G$.
	Essentially, the proof boils down to the following lemma.
	\begin{lemma}
		\begin{equation}\label{key}
		\lim_{\delta\downarrow 0}\frac{\Prob{u(x) \in K(\delta) \backslash K(0)}}{\delta} \le \delta\left(\sqrt{4\log p} + 2\right).
		\end{equation}
	\end{lemma}
	\begin{proof}
		Denote $K \coloneqq K(0)$ and note it is a convex polyhedron. Denote a projector onto $K$ as $P_K$: $\norm{x - P_Kx} = \min_{y\in K}\norm{x-y}$. Now for a (proper) face $F$ of $K$ define
		\begin{equation}\label{key}
		N_F = \left\{u\in \U \backslash K : P_Ku \in \relint(F) \right\},
		\end{equation}
		\begin{equation}\label{key}
		N_F(\delta) = N_F\bigcap K(\delta).
		\end{equation}
		Clearly, $K(\delta) \backslash K = \bigcup_{F : \text{face of }K} N_F(\delta)$. Clearly,
		\begin{equation}\label{key}
		\exists C > 0 :  N_F\subset \left\{u \in \U  : \dist(u, F) \le C\delta \right\},
		\end{equation}
		hence for any face $F$ of dimensionality less than $\dim \U - 1$
		for $\delta \downarrow 0: \gamma_p(N_F) \coloneqq  \Prob{u(x) \in N_F} = o(\delta)$. Hence,
		\begin{equation}\label{key}
		\gammap(K(\delta) \backslash K)  =  \gammap \brac{\bigcup_{F : \text{facet of }K} \NFdelta} + o(\delta) \text{ as } \delta \downarrow 0.
		\end{equation}
		Now it is left to prove that
		\begin{equation}\label{key}
		\sum_{F :\text{facet of }K } \lim_{\delta\downarrow 0} \frac{\gammap(\NFdelta)}{\delta} \le \sqrt{4 \log p} + 2.
		\end{equation}
		By \ref{lemma3}
		\begin{equation}\label{key}
		\lim_{\delta\downarrow 0} \frac{\gammap(K(\delta) \backslash K)}{\delta} = \sum_{F : \text{facet of }K}  \int_{u(x) \in F} \fp (x) d\sigma(u(x))),
		\end{equation}
		where $\fp(\cdot)$ denotes the density of $\N{0}{I_{2p}}$.
		Consider facets $F$ such that $\dist(0, F) > 4 \log p$. Choose $\barh = \dist(0,F)v$ (or flip the sign if $\barh \notin F$) and denote $\bar x = \uinv(\barh)$. Further, since $\norm{\bar x} \ge \sqrt{4\log p}$,
		\begin{equation}\label{key}
		\fp(x) = f_1(\norm{\bar x}) f_{2p-1}(k)\le \frac{f_{2p-1}(k)}{p^2}
		\end{equation}
		and given the number of facets is less than $p^2$,
		\begin{equation}\label{facetsFar}
		\sum_{\substack{F : \text{facet of }K \\ \dist(0,F)>4\log p}}  \int_{u(x) \in F} \fp (x) d\sigma(u(x))) \le 1.
		\end{equation}
		Now turn to the facets $F$ s.t. $\dist(0, F) \le 4 \log p$.
		By \ref{lemma3},
		\begin{equation}\label{key}
		\int_{u(x) \in F} \fp (x) d\sigma(u(x))) \le  \brac{\sqrt{4\log p} + 1}\gammap(N_F).
		\end{equation}
		The final observation is based on the fact that $N_F$ are disjoint and $\gamma_p(\U) = 1$
		\begin{equation}\label{facetsClose}
		\sum_{\substack{F : \text{facet of }K \\ \dist(0,F) \le 4\log p}}  \int_{u(x) \in F} \fp (x) d\sigma(u(x))) \le  {\sqrt{4\log p} + 1}
		\end{equation}
		and its combination with \eqref{facetsFar} completes the proof.
	\end{proof}

\end{proof}

\begin{lemma}\label{lemma3}
	\begin{equation}\label{key}
	\lim_{\delta\downarrow 0} \frac{\gammap(\NFdelta)}{\delta} = \int_{u(x) \in F} \fp (x) d\sigma(u(x))) \le \brac{\sqrt{\dist(0,F)} + 1}\gammap(N_F),
	\end{equation}
	where $d\sigma$ is the standard surface measure on $F$.
\end{lemma}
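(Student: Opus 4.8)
The plan is to prove the two assertions separately --- the limit identity and then the upper bound --- following the template of Nazarov's inequality (cf. \cite{nazarov}).

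I would first pin down the local geometry of $\NFdelta$. Let $j_F$ be the unique constraint active on $\relint F$ and set $v \coloneqq a_{j_F}$, which is a unit outward normal by the hypothesis of \ref{anticoncentrationKEY}. For $\delta$ small the only inequality that $K(\delta)$ relaxes near $\relint F$ is the $j_F$-th one, so $\NFdelta = \bracCurl{y+tv : y\in\relint F,\ 0<t\le\delta}$ is a slab of thickness $\delta$ erected on $F$ in the direction $v$, whereas $N_F$ itself is the full infinite prism $\relint F + (0,\infty)v$. The latter is checked directly: for $y\in\relint F$, $t>0$ and any $w\in K$ one has $\dotprod{v}{y-w}=b_{j_F}-\dotprod{v}{w}\ge 0$, hence $\norm{(y+tv)-w}\ge t=\norm{(y+tv)-y}$, so $y$ is the metric projection of $y+tv$ onto $K$; in particular the normal ray over any $y\in\relint F$ never leaves $N_F$.

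With this description the limit identity is a coarea/disintegration computation. Foliate $\NFdelta$ by the parallel translates $F+tv$, $t\in(0,\delta]$, and disintegrate the pushforward law $\gammap$ --- which is carried by the moment surface $M\coloneqq u(\R^{2p})$, a graph over the first $2p$ coordinates --- along the linear functional $w\mapsto\dotprod{v}{w}$. This writes $\gammap(\NFdelta)=\int_0^\delta\phi(t)\,dt$ with $\phi(t)$ the induced surface density on $M\cap(F+tv)$; $\phi$ is continuous at $0$ with $\phi(0)=\int_{u(x)\in F}\fp(x)\,d\sigma(u(x))$, so the claimed limit is the fundamental theorem of calculus. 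The same disintegration over $(0,\infty)$ instead of $(0,\delta]$ gives $\gammap(N_F)=\int_0^\infty\phi(t)\,dt$.

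The upper bound is the substantive step and the main obstacle. The strategy is a fibrewise Gaussian Mills-ratio estimate: along the normal ray $t\mapsto y+tv$ over a point of $M\cap F$, the base point in $\R^{2p}$ moves by $t$ times the (sub-unit) vector of first $2p$ coordinates of $v$, so the density $\fp$ decays at log-rate of order $\norm{x}$; since a point of $M$ lying on $F$ satisfies $\norm{u(x)}\asymp\norm{x}^2$ (for $F$ far from the origin, the near regime being absorbed by the additive constant), this turns the exponent into one decaying like $e^{-t\sqrt{\dist(0,F)}}$, and integrating a one-dimensional exponential of this rate yields $\int_0^\infty\phi(t)\,dt\ge(\sqrt{\dist(0,F)}+1)^{-1}\phi(0)$, which is exactly the assertion. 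The hard part will be making this rigorous: the ray leaves $M$, so ``the density along the ray'' is visible only through the disintegration, and one has to control the coarea Jacobian factors entering $\phi(t)/\phi(0)$ --- in particular verify they stay bounded below by the exponential rate above rather than merely heuristically --- and treat the regime where $\dist(0,F)$ is near $0$ and the $+1$ does the work; this bookkeeping, modelled on \cite{nazarov}, is what a careful proof must carry out.
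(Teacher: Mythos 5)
Your geometric setup and the limit identity are fine and essentially coincide with the paper's own argument: the paper parametrizes $N_F$ by orthonormal coordinates $z$ on $F$ together with the normal coordinate $t$, writes $\gammap(\NFdelta)=\int_0^\delta\brac{\int_{h\in F}\fp\brac{\uinv(h+tv)}\,dz}dt$ by Fubini, and differentiates at $\delta=0$; your disintegration along $w\mapsto\dotprod{v}{w}$ is the same computation in different language.

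The inequality is where your proposal has a genuine gap, and you concede it yourself: you reduce the claim to lower-bounding the fibre densities $\phi(t)$ against $\phi(0)$ and then state that controlling the resulting ``coarea Jacobian factors'' is ``what a careful proof must carry out.'' That control \emph{is} the substantive content of the lemma --- everything before it is soft --- so for the second assertion you have a heuristic, not a proof. The paper avoids all Jacobian bookkeeping on the moment surface by working entirely in the preimage: for $h\in F$ it asserts $\uinv(h+tv)=x+t'n$ with $t'=\sqrt{t}$ for suitable $x,n\in\R^{2p}$, so the mass of each normal fibre is an honest one-dimensional Gaussian integral of the form $\int_0^{+\infty}\fp\brac{x+t'n}dt'$, which a Mills-ratio estimate bounds below by $\fp(x)/\brac{\abs{\dotprod{x}{n}}+1}$; choosing the foot point $\barh$ as the point of $F$ nearest the origin gives $\abs{\dotprod{x}{n}}=\sqrt{\dist(0,F)}$, and integrating over $F$ yields exactly $\gammap(N_F)\ge\brac{\sqrt{\dist(0,F)}+1}^{-1}\int_{u(x)\in F}\fp(x)\,d\sigma$. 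Note also that your heuristic mechanism is off in a detail that matters: you claim the preimage base point moves by $t$ times the first $2p$ coordinates of $v$, i.e.\ linearly in $t$, whereas the quadratic coordinates of $u$ force the square-root reparametrization $t'=\sqrt{t}$ --- this is precisely what produces the decay $\e^{-t'\sqrt{\dist(0,F)}}$ along the fibre and lets the Mills ratio close the argument. Without this device, or an equally quantitative substitute for your $\phi(t)/\phi(0)$ bound, the sketch gives no explicit rate; and the regime you propose to absorb into the $+1$ is not the relevant one, since the lemma is invoked in \ref{anticoncentrationKEY} for facets with $\dist(0,F)$ as large as $4\log p$, where the $\sqrt{\dist(0,F)}$ factor is exactly what is needed.
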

\begin{proof}
	Parametrize every $h \in F$ as
	\begin{equation}\label{key}
	h= h(\barh, z) =  \barh + z_1q_1 + z_2q_2 + ...+ z_{p(p+5)/2-1}q_{p(p+5)/2-1},
	\end{equation}
	where $\barh$ is an arbitrary element of $F$, while $q_j$ form an orthonormal basis on $F-\barh$. Further, choose a unit outward normal vector $v$ to $\partial K$ at $F$.  Then we can parametrize $N_F$
	\begin{equation}\label{key}
	y = y(\barh, z, t) = h(\barh, z) + tv \text{ for } t > 0.
	\end{equation}
	Now
	\begin{equation}\label{key2}
	\begin{split}
	\gammap(N_F) &= \int_{y=y(\barh, z, t ) \in N_F} \fp\left(\uinv(y)\right)dzdt\\
	& = \int_{h = h(\barh, z) \in F}\left( \int_{0}^{+\infty} \fp\left(\uinv(y)\right)dt  \right) dz
	\end{split}
	\end{equation}
	and in the same way
	\begin{equation}\label{key}
	\gammap(\NFdelta) = \int_{0}^{\delta} \left(\int_{h = h(\barh, z) \in F} \fp\left(\uinv(y)\right)dz \right)dt.
	\end{equation}
	Thus
	\begin{equation}\label{key}
	\lim_{\delta\downarrow 0} \frac{\gammap(\NFdelta)}{\delta} = \int_{h = h(\barh, z) \in F} \fp\left(\uinv(y)\right)dz,
	\end{equation}
	which proves the equality in the claim.

	Now for any $h \in F$ and $v$ exist vectors $x$ and $n$ such that $\uinv(h+tv) = x + t'n$  for  $t' = \sqrt{t}$. Then
	\begin{equation}\label{key3}
	\begin{split}
	\int_{0}^{+\infty} &\fp\brac{\uinv(h+tv)}dt = \int_{0}^{+\infty} \fp\brac{x+t'n}dt' \\
	& \ge \fp(x) \int_{0}^{+\infty} e^{-t |x^Tn|}e^{-t^2/2}dt
	\ge \frac{\fp(x)}{|x^Tn|+1}.
	\end{split}
	\end{equation}
	Combination of \eqref{key2} and \eqref{key3} yields
	\begin{equation}\label{key}
	\gammap(N_F) \ge \int_{h\in F} \frac{1}{|x^Tn|+1} \fp(x)dz, \text{ where } x = \uinv(h).
	\end{equation}
	Now choose $h=\dist(0,F)$, note $|x^Tn| = \sqrt{\dist(0,F)}$ and establish the claim.
\end{proof}

\section{Consistency of Gaussian Process Regression by \cite{Yang2017}}
In this section we quote a consistency result for predictions of Gaussian Process Regression.
\begin{lemma}[Corollary 2.1 in \cite{Yang2017}]\label{gpconsistency} Assume, $\eps_i$ are sub-Gaussian. Let $\fstar$ be $\aleph$-smooth Sobolev and $\kappa \coloneqq \brac{\aleph-1/2}/(2\aleph)$  or $\aleph$-smooth Hölder and $\kappa\coloneqq \aleph/(2\aleph+1)$. Further let $k(\cdot, \cdot)$ satisfy \ref{keras1} and \ref{keras2}. Then, for the training sample size $n$ going to infinity with probability at least $1-n^{-10}$ we have
    \begin{equation}
    \infnorm{\fstar - \hat f} = O\brac{\brac{\frac{\log n}{n}}^{\kappa}},
    \end{equation}
    where $\hat f$ denotes the predictive function.
\end{lemma}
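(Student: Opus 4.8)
The statement is quoted as Corollary~2.1 of \cite{Yang2017}, so here I only outline how one would establish it. The plan is to reduce the Gaussian Process Regression predictive mean to a kernel ridge regression (KRR) fit and then bound its sup-norm error by a bias/variance decomposition carried out in the eigenbasis of $k(\cdot,\cdot)$. Concretely, integrating out the prior shows that $\hat f$ solves
\begin{equation}
\hat f = \arg\min_{f}\ \frac1n\sum_{i=1}^n\brac{y_i-f(X_i)}^2 + \lambda\norm{f}_{\mathcal{H}_k}^2, \qquad \lambda \asymp \frac{\sigma^2}{n\rho},
\end{equation}
where $\mathcal{H}_k$ is the RKHS of $k$, so it suffices to control $\infnorm{\hat f-\fstar}$ over the compact $\X$ for this penalized least-squares estimator.

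First I would split $\hat f-\fstar = b+v$, where $b$ is the estimator run on the noiseless responses $\brac{\fstar(X_i)}_i$ minus $\fstar$, and $v$ is the part linear in the noise $\eps$. Expanding $\fstar=\sum_j f_j\psi_j$ and using Assumption~\ref{keras2} ($\mu_j\asymp j^{-2\aleph}$), the $j$-th coordinate of $b$ has size $\abs{f_j}\,\lambda/(\mu_j+\lambda)$; combining this with $\max_j\infnorm{\psi_j}\le C_\psi$ and the smoothness constraint on $\brac{f_j}$ (Cauchy--Schwarz in the Sobolev case, an $\ell^1$-type summation in the Hölder case) yields $\infnorm{b}=O\brac{\lambda^{(2\aleph-1)/(4\aleph)}}$, respectively $O\brac{\lambda^{1/2}}$. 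Along the way one must replace the empirical Gram quantities $\frac1n\sum_i\psi_j(X_i)\psi_{j'}(X_i)$ by their $L_2(\X)$ analogues; the Lipschitz bound $\abs{\psi_j(t)-\psi_j(s)}\le jL_\psi\norm{t-s}$ from Assumption~\ref{keras1} makes this perturbation negligible.

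The hard part will be the variance term: $v(x)=\sum_{i=1}^n w_i(x)\eps_i$ is a sub-Gaussian random field on $\X$, and what is needed is $\sup_{x\in\X}\abs{v(x)}$, not just its $L_2(\X)$ norm. I would bound the pointwise sub-Gaussian parameter of $v(x)$ by $\g^2$ times the effective dimension $\sum_j \mu_j^2/(\mu_j+\lambda)^2\asymp\lambda^{-1/(2\aleph)}$ (again via Assumption~\ref{keras2}), and then run a chaining (Dudley) argument over $\X$: by Assumption~\ref{keras1} each weight $x\mapsto w_i(x)$ is Lipschitz, so the canonical metric of the field on the compact $\X$ has only logarithmically growing covering numbers, and Dudley's integral adds a $\sqrt{\log n}$ factor. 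This gives $\infnorm{v}=O\brac{\sqrt{\log n}\,\lambda^{-1/(4\aleph)}n^{-1/2}}$ on an event of probability $1-n^{-10}$ after a union bound over the discretization.

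Finally I would balance the two bounds by the prescribed choice of $\rho$ in \eqref{defrho1}--\eqref{defrho2}, i.e. $\lambda\asymp\log n/n$ in the Sobolev case and $\lambda\asymp\brac{\log n/n}^{2\aleph/(2\aleph+1)}$ in the Hölder case; substituting back produces exactly the rate $\brac{\log n/n}^{\kappa}$ with $\kappa=(\aleph-1/2)/(2\aleph)$, respectively $\kappa=\aleph/(2\aleph+1)$, the logarithm inside the rate being the combined price of sup-norm control (over the $L_2$ rate) and of the chaining bound. Since the conclusion is precisely \cite[Corollary~2.1]{Yang2017}, we only record this outline and refer there for the complete argument.
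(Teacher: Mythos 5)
The paper gives no proof of this lemma at all: it is imported verbatim as Corollary 2.1 of \cite{Yang2017}, so the only argument on record is the citation, which your outline also ultimately defers to. Your sketch (GP predictive mean as kernel ridge regression, bias/variance split in the eigenbasis of $k$, chaining for the sup-norm of the noise part, then balancing via $\lambda\asymp\sigma^2/(n\rho)$ with the paper's choices \eqref{defrho1}--\eqref{defrho2}) is a faithful reconstruction of the standard route to such a bound, and the exponent bookkeeping does reproduce $\brac{\log n/n}^{\kappa}$ in both the Sobolev and H\"older cases, so there is nothing to object to beyond its being, as you say, a sketch referred back to \cite{Yang2017}.
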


	\bibliographystyle{plain}
	\bibliography{main}

\end{document}